\DeclareMathOperator{\Spec}{Spec}
\begin{document}
\newtheorem{theorem}{Theorem}[section]
\newtheorem{lemma}[theorem]{Lemma}
\newtheorem{proposition}[theorem]{Proposition}
\newtheorem{corollary}[theorem]{Corollary}
\newtheorem{example}[theorem]{Example}
\newtheorem{remark}[theorem]{Remark}
\newtheorem{prob}[theorem]{Problem}
\newtheorem{theoremA}{Theorem}
\renewcommand{\thetheoremA}{}
\theoremstyle{definition}
\newtheorem{defi}[theorem]{Definition}
\renewcommand{\thedefi}{}
\newtheorem{nota}[theorem]{Notation}
\renewcommand{\thedefi}{}
\input amssym.def
\long\def\alert#1{\smallskip{\hskip\parindent\vrule%
\vbox{\advance\hsize-2\parindent\hrule\smallskip\parindent.4\parindent%
\narrower\noindent#1\smallskip\hrule}\vrule\hfill}\smallskip}
\def\ff{\frak}
\def\Spec{\mbox{\rm Spec}}
\def\type{\mbox{ type}}
\def\Hom{\mbox{ Hom}}
\def\rk{\rm{rk}}
\def\Ext{\mbox{ Ext}}
\def\Ker{\mbox{ Ker}}
\def\Max{\mbox{\rm Max}}
\def\Min{\mbox{\rm Min}}
\def\Maxi{\mbox{\rm Max}^{-1}}
\def\End{\mbox{\rm End}}
\def\Inv{{\rm Inv}}
\def\Div{{\rm Div}}
\def\Uni{{\rm Uni}}
\def\l{\langle\:}
\def\r{\:\rangle}
\def\Rad{\mbox{\rm Rad}}
\def\Zar{\mbox{\rm Zar}}
\def\Supp{\mbox{\rm Supp}}
\def\Rep{\mbox{\rm Rep}}
\def\cal{\mathcal}
\def\O{{\cal O}}
\def\L{{\cal L}}
\def\F{{\cal F}}
\def\Ms{{\cal M}_{\#}}
\def\Md{{\cal M}_{\dagger}}

\newcommand{\Z}{\mathbb{Z}}
\newcommand{\N}{\mathbb{N}}

\title[Radical factorization in finitary ideal systems]{Radical factorization in finitary ideal systems}

\author{Bruce Olberding}
\address{Department of Mathematical Sciences, New Mexico State University, Las Cruces, NM 88003, United States}
\email{bruce@nmsu.edu}

\author{Andreas Reinhart}
\address{Department of Mathematical Sciences, New Mexico State University, Las Cruces, NM 88003, United States}
\email{andreas.reinhart@uni-graz.at}

\subjclass[2000]{13A15, 13F05, 20M12, 20M13}
\keywords{radical factorization, ideal system, monoid ring, modularization}

\begin{abstract}
In this paper we investigate the concept of radical factorization with respect to finitary ideal systems of cancellative monoids. We present new characterizations for $r$-almost Dedekind $r$-SP-monoids and provide specific descriptions of $t$-almost Dedekind $t$-SP-monoids and $w$-SP-monoids. We show that a monoid is a $w$-SP-monoid if and only if the radical of every nontrivial principal ideal is $t$-invertible. We characterize when the monoid ring is a $w$-SP-domain and describe when the $\ast$-Nagata ring is an SP-domain for a star operation $\ast$ of finite type.
\end{abstract}

\maketitle

\section{Introduction}

The concept of factoring ideals into radical ideals has been studied by various authors. It started with papers by Vaughan and Yeagy \cite{V,Y} who studied radical factorization in integral domains. They showed that every integral domain for which every ideal is a finite product of radical ideals (we will call such a domain an SP-domain) is an almost Dedekind domain. Later the first-named author gave a complete characterization in \cite{Olb} of SP-domains in the context of almost Dedekind domains. After that the second-named author investigated the concept of radical factorization in \cite{R,RR} with respect to finitary ideal systems. Further progress in describing SP-domains was made in \cite{FHL, HOR, MC}. Besides that, radical factorization in commutative rings with identity was investigated in \cite{AD}. Many of these results were extended in a recent paper \cite{OR} where radical factorization was studied in the context of principally generated $C$-lattice domains.

Ideal systems of monoids are a generalization of star operations of integral domains. They were studied in detail in \cite{HA}. It turns out that (finitary) ideal systems in general fail to be modular (i.e., the lattice of ideals induced by the ideal system is not modular). In particular, an $r$-SP-monoid (i.e., the ``ideal system theoretic analogue'' of an SP-domain) can fail to be $r$-almost Dedekind. The goals of this paper are manyfold. We extend the known characterizations of $r$-almost Dedekind $r$-SP-monoids for finitary ideal systems $r$. We consider lattices of ideals that are (a priori) neither principally generated nor modular. Thus we complement the results of \cite{OR} by describing the lattice of $r$-ideals in case that $r$ is a (not necessarily modular) finitary ideal system. Let $p$ be a modular finitary ideal system and $r$ a finitary ideal system such that every $r$-ideal is a $p$-ideal. Then there is a modular finitary ideal system $\widetilde{r_p}$ ``between'' $r$ and $p$, called the $p$-modularization of $r$, which can be used to describe the $r$-ideals. We show, for instance, that a monoid is an $\widetilde{r_p}$-SP-monoid if and only if every minimal prime $s$-ideal of a nontrivial $r$-finitely generated $r$-ideal is of height one and the radical of every nontrivial principal ideal is $r$-invertible. We put particular emphasis on the $t$-system and its modularizations (like the $w$-system) and present stronger characterizations for these types of ideal systems. As an application we investigate several ring-theoretical constructions with respect to the aforementioned properties.

In Section 2 we introduce the notion of (finitary) ideal systems and most of the important terminology. We also show the most basic properties of the modularizations of a finitary ideal system. In Section 3 we study finitary ideal systems in general. Our main results are characterization theorems for $r$-almost Dedekind $r$-SP-monoids as well as $r$-B\'ezout $r$-SP-monoids.

We put our focus on the $t$-system and its modularizations in Section 4. We will show that a monoid is a $w$-SP-monoid if and only if the radical of every nontrivial principal ideal is $t$-invertible. Moreover, we show that a monoid is both a $t$-B\'ezout monoid (i.e., a GCD-monoid) and a $t$-SP-monoid if and only if the radical of every principal ideal is principal.

After that we study the monoid of $r$-invertible $r$-ideals in Section 5. In particular, we characterize when every principal ideal of the monoid of $r$-invertible $r$-ideals is a finite product of pairwise comparable radical principal ideals. We also give a technical characterization of radical factorial monoids (i.e., monoids for which every principal ideal is a finite product of radical principal ideals). Furthermore, we describe when the monoid of $r$-invertible $r$-ideals is radical factorial.

Finally, we investigate several ring-theoretical constructions in Section 6. We show that if $R$ is an integral domain and $H$ is a grading monoid (i.e., a cancellative torsionless monoid), then $R[H]$ is a $w$-SP-domain if and only if $R$ is a $w$-SP-domain, $H$ is a $w$-SP-monoid and the homogeneous field of quotients of $R[H]$ is radical factorial. We also show that if $\ast$ is a star operation of finite type of an integral domain $R$, then the $\ast$-Nagata ring of $R$ is an SP-domain if and only if $R$ is a $\ast$-almost Dedekind $\ast$-SP-domain.

\section{Ideal systems}

In this section we introduce the notion of (finitary) ideal systems and the most important terminology. In the following a monoid $H$ is always a commutative semigroup with identity and more than one element such that every nonzero element of $H$ is cancellative. If not stated otherwise, then $H$ is written multiplicatively.

\begin{center}
\textit{Throughout this paper let $H$ be a monoid and let $G$ be the quotient monoid of $H$.}
\end{center}

Let $z(H)$ denote the set of {\it zero elements} of $H$ (i.e., $z(H)=\{z\in H\mid zx=z$ for all $x\in H\}$). (We introduce this notion to handle both monoids with and without a zero element. Also note that $|z(H)|\leq 1$.)

Let $X\subseteq H$ and $Y\subseteq G$. Set $\sqrt{X}=\{x\in H\mid x^n\in X$ for some $n\in\mathbb{N}\}$, called the {\it radical} of $X$ and $Y^{-1}=\{z\in G\mid zY\subseteq H\}$. We say that $X$ is an {\it $s$-ideal} of $H$ if $X=XH\cup z(H)$ and we say that $X$ is {\it radical} if $\sqrt{X}=X$. An $s$-ideal $J$ of $H$ is called a {\it principal ideal} of $H$ if it is generated by at most one element (i.e., $J=AH\cup z(H)$ for some $A\subseteq H$ with $|A|\leq 1$).

If $a\in H$, then $a$ is called {\it prime $($primary, radical$)$} if $aH$ (i.e., the principal ideal generated by $a$) is a prime (primary, radical) $s$-ideal of $H$. Let $\mathfrak X(H)$ denote the set of minimal prime $s$-ideals of $H$ which properly contain $z(H)$ and let $\mathcal{P}(X)$ denote the set of prime $s$-ideals of $H$ that are minimal above $X$.

\smallskip
By $H^{\bullet}$ (resp. $H^{\times}$) we denote the set of nonzero elements of $H$ (resp. the set of units of $H$) and by $\mathbb{P}(H)$ we denote the power set of $H$. Let $r:\mathbb{P}(H)\rightarrow\mathbb{P}(H)$, $X\mapsto X_r$ be a map. For subsets $X,Y\subseteq H$ and $c\in H$ we consider the following properties:
\begin{itemize}
\item[(A)] $XH\cup z(H)\subseteq X_r$.
\item[(B)] If $X\subseteq Y_r$, then $X_r\subseteq Y_r$.
\item[(C)] $cX_r=(cX)_r$.
\item[(D)] $X_r=\bigcup_{E\subseteq X,|E|<\infty} E_r$.
\end{itemize}

We say that $r$ is a {\it $($finitary$)$ ideal system} on $H$ if $r$ satisfies properties $A$, $B$, $C$ (and $D$) for all $X,Y\subseteq H$ and $c\in H$. Also note that an ideal system $r$ is finitary if and only if $X_r\subseteq\bigcup_{E\subseteq X,|E|<\infty} E_r$ for all $X\subseteq H$. Furthermore, if $r$ is an ideal system, then it follows from (A) and (B) that $r$ is idempotent (i.e., $(X_r)_r=X_r$ for each $X\subseteq H$).

\smallskip
Let $r$ be finitary ideal system on $H$ and $X\subseteq H$. We say that $X$ is an {\it $r$-ideal} (resp. an {\it $r$-invertible $r$-ideal}) if $X_r=X$ (resp. if $X_r=X$ and $(XX^{-1})_r=H$). Now let $I$ be an $r$-ideal of $H$. The $r$-ideal $I$ is called {\it nontrivial} if $z(H)\subsetneq I$ and it is called {\it proper} if $I\subsetneq H$. By $\mathcal{I}_r(H)$ (resp. $\mathcal{I}_r^*(H)$) we denote the set of $r$-ideals (resp. the set of $r$-invertible $r$-ideals) of $H$. Observe that $\sqrt{I}=\bigcap_{P\in\mathcal{P}(I)} P$ and $\mathcal{P}(I)\subseteq\mathcal{I}_r(H)$. If $I$ and $J$ are $r$-ideals of $H$, then $(IJ)_r$ is called the {\it $r$-product} of $I$ and $J$. Note that the set of $r$-ideals forms a commutative semigroup with identity under $r$-multiplication and the set of $r$-invertible $r$-ideals of $H$ forms a monoid under $r$-multiplication.

\smallskip
Note that every (nontrivial) principal ideal of $H$ is an ($r$-invertible) $r$-ideal of $H$. Let $\mathcal{H}$ be the set of nontrivial principal ideals of $H$, $q(\mathcal{I}_r^*(H))$, resp. $q(\mathcal{H})$, the quotient group of $\mathcal{I}_r^*(H)$, resp. $\mathcal{H}$, and $\mathcal{C}_r(H)=q(\mathcal{I}_r^*(H))/q(\mathcal{H})$, called the {\it $r$-class group} of $H$. Note that $\mathcal{C}_r(H)$ is trivial if and only if every $r$-invertible $r$-ideal of $H$ is principal. Moreover, $\mathcal{C}_r(H)$ is torsionfree if and only if for all $k\in\mathbb{N}$ and $I\in\mathcal{I}_r^*(H)$ such that $(I^k)_r$ is principal, it follows that $I$ is principal. Let $r$-${\rm spec}(H)$, resp. $r$-$\max(H)$ denote the set of prime $r$-ideals, resp. the set of $r$-maximal $r$-ideals of $H$. We say that $I\in\mathcal{I}_r(H)$ is $r$-finitely generated if $I=E_r$ for some finite $E\subseteq I$.

\smallskip
We say that $r$ is {\it modular} if for all $r$-ideals $I,J,N$ of $H$ with $I\subseteq N$ it follows that $(I\cup J)_r\cap N\subseteq (I\cup (J\cap N))_r$ (equivalently, for all $r$-ideals $I,J,N$ of $H$ with $I\subseteq N$ it follows that $(I\cup J)_r\cap N=(I\cup (J\cap N))_r$). Now let $p$ be a finitary ideal system on $H$. The ideal system $p$ is called finer than $r$ (or $r$ is called coarser than $p$), denoted by $p\leq r$, if $X_p\subseteq X_r$ for all $X\subseteq H$ (equivalently, every $r$-ideal of $H$ is a $p$-ideal of $H$). The notions of finer and coarser can be extended to arbitrary ideal systems.

\smallskip
Next we introduce the most important ideal systems. Let $T\subseteq H^{\bullet}$ be multiplicatively closed (i.e., $1\in T$ and $xy\in T$ for all $x,y\in T$). Then there is a unique finitary ideal system $T^{-1}r$ defined on $T^{-1}H$ such that $T^{-1}(X_r)=(T^{-1}X)_{T^{-1}r}$ for all $X\subseteq H$. Furthermore, if $r$ is modular, then $T^{-1}r$ is modular. If $P$ is a prime $s$-ideal of $H$, then we set $r_P=(H\setminus P)^{-1}r$. First we define the $s$-system.

\[
\textnormal{Let }s:\mathbb{P}(H)\rightarrow\mathbb{P}(H), X\mapsto XH\cup z(H)\textnormal{.}
\]

Note that $s$ is a finitary ideal system on $H$. Next we introduce the $v$-system and the $t$-system. First let $H=G$.
\[
\textnormal{For }X\subseteq H\textnormal{ let }X_v=X_t=z(H)\textnormal{ if }X\subseteq z(H)\textnormal{ and }X_v=X_t=H\textnormal{ if }X\nsubseteq z(H)\textnormal{.}
\]

Now let $H\not=G$.
\[
\textnormal{Let }v:\mathbb{P}(H)\rightarrow\mathbb{P}(H), X\mapsto (X^{-1})^{-1}\textnormal{ and }t:\mathbb{P}(H)\rightarrow\mathbb{P}(H), X\mapsto\bigcup_{E\subseteq X, |E|<\infty} E_v\textnormal{.}
\]

A subset $A$ of $H$ is called a divisorial ideal if $A_v=A$. Note that every $r$-invertible $r$-ideal of $H$ is divisorial. Let $R$ be an integral domain. Now we define the $d$-system.

\[
\textnormal{Let }d:\mathbb{P}(R)\rightarrow\mathbb{P}(R), X\mapsto {}_R(X)\textnormal{,}
\]
where ${}_R(X)$ is the ring ideal generated by $X$. Now let $p\leq r$. Next we introduce a finitary ideal system $\widetilde{r_p}$ depending on $p$ and $r$. We study some of its elementary properties in Lemma~\ref{Lem 2.1}.

\[
\textnormal{Let }\widetilde{r_p}:\mathbb{P}(H)\rightarrow\mathbb{P}(H), X\mapsto\{x\in H\mid xF\subseteq X_p\textnormal{ and }F_r=H\textnormal{ for some }F\subseteq H\}\textnormal{.}
\]

\begin{lemma}\label{Lem 2.1} Let $p$ and $r$ be finitary ideal systems on $H$ such that $p\leq r$.
\begin{itemize}
\item[(1)] $\widetilde{r_p}$ is a finitary ideal system on $H$ such that $p\leq\widetilde{r_p}\leq r$.
\item[(2)] $r$-$\max(H)=\widetilde{r_p}$-$\max(H)$ and $X_{\widetilde{r_p}}=\bigcap_{M\in r\textnormal{-}\max(H)} (X_p)_M$ for each $X\subseteq H$.
\item[(3)] $\mathcal{I}_r^*(H)=\mathcal{I}_{\widetilde{r_p}}^*(H)$, as monoids.
\item[(4)] If $p$ is modular, then $\widetilde{r_p}$ is modular.
\item[(5)] If $m$ and $n$ are finitary ideal systems on $H$ such that $p\leq m\leq\widetilde{r_p}\leq n\leq r$, then $\widetilde{n_m}=\widetilde{r_p}$.
\end{itemize}
\end{lemma}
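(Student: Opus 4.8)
The plan is to establish the five assertions essentially in the order listed, since each later part leans on the description of $\widetilde{r_p}$ developed in the earlier parts; in particular part (2) will do most of the heavy lifting and part (5) will be a formal consequence of (2). Throughout I write $c_X$ for the operator $X\mapsto X_{\widetilde{r_p}}$ and freely use that $p$ and $r$ are idempotent finitary ideal systems with $p\le r$, so that $X_p\subseteq X_r$ always and $r$-ideals are $p$-ideals.

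For part (1), I would verify properties (A)--(D) directly from the defining formula $X_{\widetilde{r_p}}=\{x\in H\mid xF\subseteq X_p\text{ and }F_r=H\text{ for some }F\subseteq H\}$. Property (A) is immediate by taking $F=H$ (note $H_r=H$ and $XH\cup z(H)\subseteq X_p$). For (C), given $c\in H$ one checks both inclusions in $cX_{\widetilde{r_p}}=(cX)_{\widetilde{r_p}}$ by transporting the witness $F$ across multiplication by $c$ and using $cX_p=(cX)_p$. For finitariness (D) one uses that if $F_r=H$ then, since $r$ is finitary, $H=E_r$ for some finite $E\subseteq F$, and similarly $xF\subseteq X_p$ together with finitariness of $p$ lets one replace $X_p$ by $E'_p$ for a finite $E'\subseteq X$; assembling these reduces the witness to a finite subset of $X$. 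Property (B) is the one requiring a little more care: if $X\subseteq Y_{\widetilde{r_p}}$ one must produce, for each $x\in X_{\widetilde{r_p}}$, a common witness $F$ with $xF\subseteq Y_p$; the idea is that each $x\in X_{\widetilde{r_p}}$ has $xF\subseteq X_p$, and each element of $X_p$ (hence of $F'X\subseteq\cdots$) can in turn be pushed into $Y_p$ using the witnesses for elements of $X$ and the idempotence $(Y_p)_p=Y_p$; multiplying the two witness sets together gives a single $F''$ with $(F'')_r=H$ and $xF''\subseteq Y_p$. Finally $p\le\widetilde{r_p}$ is clear from (A)-type reasoning (take $F=H$), and $\widetilde{r_p}\le r$ follows because if $xF\subseteq X_p\subseteq X_r$ and $F_r=H$ then $x\in (xF)_r=(xH)_r\cap\cdots\subseteq X_r$, using $C$ and the computation $xH=xF_r\supseteq$ enough of $X_r$; more precisely $x\cdot F_r=(xF)_r\subseteq (X_r)_r=X_r$ and $x\in xH=x F_r$.

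For part (2), the inclusion $\widetilde{r_p}$-$\max(H)\subseteq r$-$\max(H)$ and its reverse should come from (1) together with a localization argument: since $p\le\widetilde{r_p}\le r$, every $r$-ideal is an $\widetilde{r_p}$-ideal, so $r$-maximal $r$-ideals are among the $\widetilde{r_p}$-ideals, and one checks an $r$-maximal $r$-ideal $M$ is $\widetilde{r_p}$-maximal by showing $M_{\widetilde{r_p}}=M$ (it is an $r$-ideal contained in no proper larger $\widetilde{r_p}$-ideal since those are $p$-ideals sitting inside $r$-ideals); conversely an $\widetilde{r_p}$-maximal ideal is an $r$-ideal by applying the operator $r$ and maximality. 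The formula $X_{\widetilde{r_p}}=\bigcap_{M\in r\text{-}\max(H)}(X_p)_M$ is the crux: one shows $x\in X_{\widetilde{r_p}}$ iff for every $r$-maximal $M$ the image of $x$ lies in $(X_p)_M=(H\setminus M)^{-1}X_p$. For the forward direction, from $xF\subseteq X_p$ with $F_r=H$ one notes $F\not\subseteq M$ for every $r$-maximal $M$ (else $F_r\subseteq M\subsetneq H$), so picking $f\in F\setminus M$ gives $xf\in X_p$ with $f\in H\setminus M$, i.e. $x\in (X_p)_M$. For the reverse, if $x\in (X_p)_M$ for all $r$-maximal $M$, then for each such $M$ there is $f_M\in H\setminus M$ with $xf_M\in X_p$; the set $F=\{f_M\mid M\in r\text{-}\max(H)\}$ is contained in no $r$-maximal $r$-ideal, hence $F_r=H$, and $xF\subseteq X_p$, so $x\in X_{\widetilde{r_p}}$. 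The main obstacle I anticipate is precisely in this reverse direction: one must know that a subset of $H$ meeting the complement of every $r$-maximal $r$-ideal must $r$-generate $H$ — this is the statement that every proper $r$-ideal is contained in an $r$-maximal one, which for finitary $r$ follows from Zorn's lemma applied to the $r$-finitely generated case, and I would invoke it as a standard fact about finitary ideal systems (cf. \cite{HA}).

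Part (3) follows from (1): since $r$-invertible $r$-ideals are divisorial hence $v$-ideals, and $p\le\widetilde{r_p}\le r$ sandwiches the three operators, an $r$-invertible $r$-ideal $I$ satisfies $I=I_r=I_{\widetilde{r_p}}$, and $(II^{-1})_r=H$ forces $(II^{-1})_{\widetilde{r_p}}=H$ because this is an $r$-ideal equal to $H$ whose $\widetilde{r_p}$-closure can only be larger and sits inside $H$; conversely an $\widetilde{r_p}$-invertible $\widetilde{r_p}$-ideal is a $p$-ideal with $(II^{-1})_{\widetilde{r_p}}=H$, and applying $r$ and using $\widetilde{r_p}\le r$ gives $(II^{-1})_r=H$ and $I_r=I$ via divisoriality. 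The monoid structures agree because $\widetilde{r_p}$-multiplication and $r$-multiplication coincide on this common set: for $r$-invertible $I,J$ one has $(IJ)_r=(IJ)_{\widetilde{r_p}}$ since $(IJ)_r$ is already divisorial and closing under $\widetilde{r_p}$ then $r$ returns it. For part (4), modularity of $\widetilde{r_p}$ from modularity of $p$: using the formula in (2), $X_{\widetilde{r_p}}=\bigcap_M (X_p)_M$ and each localization $p_M$ of the modular $p$ is modular (stated in the excerpt), and a pointwise/localized intersection of modular closures is modular — I would check the modular law $(I\cup J)_{\widetilde{r_p}}\cap N\subseteq (I\cup(J\cap N))_{\widetilde{r_p}}$ by localizing at each $r$-maximal $M$, where it reduces to the modular law for $p_M$ on $H_M$. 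Finally part (5) is formal: given $p\le m\le\widetilde{r_p}\le n\le r$, apply the formula of (2) with the pair $(m,n)$ in place of $(p,r)$; since $n$-$\max(H)=r$-$\max(H)$ (both equal $\widetilde{r_p}$-$\max(H)$ by (2) applied twice) and $(X_m)_M=(X_p)_M$ for every $r$-maximal $M$ — because $p\le m\le\widetilde{r_p}$ gives $X_p\subseteq X_m\subseteq X_{\widetilde{r_p}}\subseteq (X_p)_M$ after localizing, squeezing all localizations equal — we get $X_{\widetilde{n_m}}=\bigcap_M (X_m)_M=\bigcap_M (X_p)_M=X_{\widetilde{r_p}}$, so $\widetilde{n_m}=\widetilde{r_p}$.
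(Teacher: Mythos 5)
Your plan is sound and, for parts (1) and (2), essentially parallels the paper: (1) is the same direct verification with finite-witness reductions (note that for property (C) the inclusion $(cX)_{\widetilde{r_p}}\subseteq cX_{\widetilde{r_p}}$ silently needs the prior observation $X_{\widetilde{r_p}}\subseteq X_r$ in order to write $z=cv$ at all), and in (2) your reverse inclusion --- assembling the single witness $F=\{f_M\mid M\in r\text{-}\max(H)\}$ and invoking that a set contained in no $r$-maximal $r$-ideal must $r$-generate $H$ --- is a legitimate variant of the paper's appeal to $X_{\widetilde{r_p}}=\bigcap_{M}(X_{\widetilde{r_p}})_M$; both rest on the same standard facts about finitary systems. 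Parts (4) and (5) are where you genuinely diverge: the paper proves (4) by a one-line witness argument straight from the definition (no localization) and (5) element-wise, whereas you deduce both from the formula in (2). Your localization proof of (4) does work (it uses that localization commutes with finite intersections of $s$-ideals and that $p_M$ is modular, both available), and your (5) is fine except that ``$n$-$\max(H)=r$-$\max(H)$ by (2) applied twice'' is not what (2) gives --- applied to the pair $(m,n)$ it yields $n$-$\max(H)=\widetilde{n_m}$-$\max(H)$ --- so you still owe the short sandwich argument (for $M\in n$-$\max(H)$ one has $M_r\neq H$, since $M_r=H$ together with $1\cdot M\subseteq M_p$ would give $1\in M_{\widetilde{r_p}}=M$; hence $M=M_r$ by $n$-maximality, and maximality transfers both ways).

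The one genuine gap is in (3), in the inclusion $\mathcal{I}_r^*(H)\subseteq\mathcal{I}_{\widetilde{r_p}}^*(H)$. You claim $(II^{-1})_r=H$ ``forces'' $(II^{-1})_{\widetilde{r_p}}=H$ because the $\widetilde{r_p}$-closure ``can only be larger and sits inside $H$'', but that reasoning is vacuous: since $\widetilde{r_p}\leq r$ one only gets $II^{-1}\subseteq(II^{-1})_{\widetilde{r_p}}\subseteq(II^{-1})_r=H$, and a priori the middle term could be a proper ideal --- this is exactly the point where invertibility could be lost when passing to a finer system, so it cannot be waved away. The repair is short and uses (2), as the paper does: if $(II^{-1})_{\widetilde{r_p}}\subsetneq H$, it lies in some $\widetilde{r_p}$-maximal ideal $M$, which by (2) is $r$-maximal, and then $H=(II^{-1})_r\subseteq M_r=M$, a contradiction; equivalently, $II^{-1}\nsubseteq M$ for every $r$-maximal $M$, so $1\in((II^{-1})_p)_M$ for all $M$ and the formula in (2) gives $(II^{-1})_{\widetilde{r_p}}=H$. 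With that one-line fix (and the small supplement in (5)), your proposal goes through.
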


\begin{proof} (1) \textsc{Claim 1.} If $Y\subseteq H$ and $N$ is a finite subset of $Y_{\widetilde{r_p}}$, then $NF\subseteq Y_p$ and $F_r=H$ for some $F\subseteq H$.

\smallskip
Let $Y\subseteq H$ and let $N$ be a finite subset of $Y_{\widetilde{r_p}}$. For each $e\in N$, there is some subset $F_e$ of $H$ such that $eF_e\subseteq Y_p$ and $(F_e)_r=H$. Set $F=\prod_{e\in N} F_e$. Then $NF\subseteq Y_p$ and $F_r=H$.\qed(Claim 1)

\smallskip
\textsc{Claim 2.} If $X\subseteq H$ and $x\in X_{\widetilde{r_p}}$, then there are some finite $E\subseteq H$ and some finite $N\subseteq X$ such that $xE\subseteq N_p$, $E_r=H$, $x\in N_{\widetilde{r_p}}$ and $x\in X_r$.

\smallskip
Let $X\subseteq H$ and $x\in X_{\widetilde{r_p}}$. There is some $E\subseteq H$ such that $xE\subseteq X_p$ and $E_r=H$. Since $r$ is finitary, we can assume without restriction that $E$ is finite. Since $p$ is finitary, there is some finite $N\subseteq X$ such that $xE\subseteq N_p$. Consequently, $x\in N_{\widetilde{r_p}}$ and $x\in xH=xE_r=(xE)_r\subseteq (X_p)_r=X_r$.\qed(Claim 2)

\smallskip
Let $X,Y\subseteq H$ and $c\in H$. If $y\in X_p$, then since $\{1\}_r=H$ and $y\in y\{1\}\subseteq X_p$, we have that $y\in X_{\widetilde{r_p}}$. Therefore, $X_p\subseteq X_{\widetilde{r_p}}$, and hence $X_s\subseteq X_{\widetilde{r_p}}$.

\smallskip
Next we show that if $X\subseteq Y_{\widetilde{r_p}}$, then $X_{\widetilde{r_p}}\subseteq Y_{\widetilde{r_p}}$. Let $X\subseteq Y_{\widetilde{r_p}}$ and $x\in X_{\widetilde{r_p}}$. By Claim 2 there are some finite $E\subseteq H$ and some finite $N\subseteq X$ such that $xE\subseteq N_p$ and $E_r=H$. By Claim 1 there is some $F\subseteq H$ such that $NF\subseteq Y_p$ and $F_r=H$. This implies that $xEF\subseteq N_pF\subseteq (N_pF)_p=(NF)_p\subseteq Y_p$ and $(EF)_r=H$, and hence $x\in Y_{\widetilde{r_p}}$.

\smallskip
Now we show that $cX_{\widetilde{r_p}}=(cX)_{\widetilde{r_p}}$. First let $z\in X_{\widetilde{r_p}}$. There is some $E\subseteq H$ such that $zE\subseteq X_p$ and $E_r=H$. Since $czE\subseteq cX_p=(cX)_p$ and $E_r=H$, we have that $cz\in (cX)_{\widetilde{r_p}}$. Therefore, $cX_{\widetilde{r_p}}\subseteq (cX)_{\widetilde{r_p}}$.

Now let $z\in (cX)_{\widetilde{r_p}}$. It follows by Claim 2 that $z\in (cX)_r=cX_r$, and hence $z=cv$ for some $v\in H$. If $c\in z(H)$, then $z\in z(H)\subseteq cX_{\widetilde{r_p}}$. Now let $c\not\in z(H)$. There is some $E\subseteq H$ such that $cvE\subseteq (cX)_p=cX_p$ and $E_r=H$. Consequently, $vE\subseteq X_p$, and thus $v\in X_{\widetilde{r_p}}$. We infer that $z\in cX_{\widetilde{r_p}}$.

\smallskip
Putting all these parts together shows that $\widetilde{r_p}$ is an ideal system on $H$. We infer by Claim 2 that $X_{\widetilde{r_p}}\subseteq\bigcup_{F\subseteq X,|F|<\infty} F_{\widetilde{r_p}}$, and hence $\widetilde{r_p}$ is finitary. We have already shown (below the proof of Claim 2) that $X_p\subseteq X_{\widetilde{r_p}}$. Moreover, $X_{\widetilde{r_p}}\subseteq X_r$ by Claim 2. This implies that $p\leq\widetilde{r_p}\leq r$.

\smallskip
(2) To show that $r$-$\max(H)=\widetilde{r_p}$-$\max(H)$ it is sufficient to show that every $M\in\widetilde{r_p}$-$\max(H)$ is an $r$-ideal of $H$. Let $M\in\widetilde{r_p}$-$\max(H)$. Assume that $M$ is not an $r$-ideal of $H$. Then $M_r=H$. We have that $1M\subseteq M_p$, and hence $1\in M_{\widetilde{r_p}}=M$, a contradiction.

Now let $X\subseteq H$. Let $x\in X_{\widetilde{r_p}}$ and $N\in r$-$\max(H)$. Then $xE\subseteq X_p$ and $E_r=H$ for some $E\subseteq H$, and thus there is some $y\in E\setminus N$. It follows that $xy\in X_p$, and hence $x\in y^{-1}X_p\subseteq (X_p)_N$. This implies that $X_{\widetilde{r_p}}\subseteq (X_p)_N$ for every $N\in r$-$\max(H)$. Moreover, we have that
\[
\bigcap_{M\in r\textnormal{-}\max(H)} (X_p)_M=\bigcap_{M\in\widetilde{r_p}\textnormal{-}\max(H)} (X_p)_M\subseteq\bigcap_{M\in\widetilde{r_p}\textnormal{-}\max(H)} (X_{\widetilde{r_p}})_M=X_{\widetilde{r_p}}.
\]

(3) Since $\widetilde{r_p}\leq r$ by (1), we have clearly that $\mathcal{I}_{\widetilde{r_p}}^*(H)\subseteq\mathcal{I}_r^*(H)$. Now let $I\in\mathcal{I}_r^*(H)$. Assume that $I\not\in\mathcal{I}_{\widetilde{r_p}}^*(H)$. Then $(II^{-1})_{\widetilde{r_p}}\subsetneq H$, and hence there is some $M\in\widetilde{r_p}$-$\max(H)$ such that $II^{-1}\subseteq M$. We infer by (2) that $M\in r$-$\max(H)$, and hence $H=(II^{-1})_r\subseteq M_r=M$, a contradiction. It remains to show that the $\widetilde{r_p}$-multiplication and the $r$-multiplication coincide on $\mathcal{I}_{\widetilde{r_p}}^*(H)$. Let $J,L\in\mathcal{I}_{\widetilde{r_p}}^*(H)$. Then $(JL)_{\widetilde{r_p}}\in\mathcal{I}_{\widetilde{r_p}}^*(H)\subseteq\mathcal{I}_r(H)$. We infer that $(JL)_{\widetilde{r_p}}=((JL)_{\widetilde{r_p}})_r=(JL)_r$, since $\widetilde{r_p}\leq r$ by (1).

(4) Let $p$ be modular and let $I,J,N$ be $\widetilde{r_p}$-ideals of $H$ such that $I\subseteq N$. Let $x\in (I\cup J)_{\widetilde{r_p}}\cap N$. Then there is some $E\subseteq H$ such that $xE\subseteq (I\cup J)_p$ and $E_r=H$, and thus $xE\subseteq (I\cup J)_p\cap N=(I\cup (J\cap N))_p$. We infer that $x\in (I\cup (J\cap N))_{\widetilde{r_p}}$.

(5) Let $m$ and $n$ be finitary ideal systems on $H$ such that $p\leq m\leq\widetilde{r_p}\leq n\leq r$ and $X\subseteq H$. First let $x\in X_{\widetilde{n_m}}$. Then there is some finite $E\subseteq H$ such that $xE\subseteq X_m$ and $E_n=H$. Then $xE\subseteq X_{\widetilde{r_p}}$ and $E_r=H$. As shown in (1), we have that $xEF\subseteq X_p$ for some $F\subseteq H$ with $F_r=H$. Observe that $(EF)_r=H$, and thus $x\in X_{\widetilde{r_p}}$.

Now let $x\in X_{\widetilde{r_p}}$. Then $xE\subseteq X_p$ and $E_r=H$ for some $E\subseteq H$. By (2) we have that $E_{\widetilde{r_p}}=H$. Therefore, $xE\subseteq X_m$ and $E_n=H$, and hence $x\in X_{\widetilde{n_m}}$.
\end{proof}

If $p\leq r$ are finitary ideal systems on $H$ and $p$ is modular, then we say (in view of Lemma~\ref{Lem 2.1}(4)) that $\widetilde{r_p}$ is the {\it $p$-modularization} of $r$. Set $w_p=\widetilde{t_p}$ and $w=w_s$. We have that $s$ is the finest ideal system on $H$, $t$ is the coarsest finitary ideal system on $H$ and $v$ is the coarsest ideal system on $H$. Furthermore, $s\leq w\leq t\leq v$ and if $H$ is an integral domain, then $s\leq d\leq w_d\leq t\leq v$. Note that both the $s$-system and the $d$-system are modular finitary ideal systems. In what follows, we use the remarks of this paragraph without further citation.

\section{Results for finitary ideal systems}

Let $r$ be a finitary ideal system on $H$. We say that $H$ is an {\it $r$-SP-monoid} if every $r$-ideal of $H$ is a finite $r$-product of radical $r$-ideals of $H$. Moreover, $H$ is called {\it radical factorial} if every principal ideal of $H$ is a finite product of radical principal ideals of $H$. Furthermore, $H$ is called {\it factorial} if every principal ideal of $H$ is a finite product of prime principal ideals of $H$ (equivalently, every nontrivial prime $t$-ideal of $H$ contains a nontrivial prime principal ideal of $H$). We say that $H$ is a {\it valuation monoid} if the principal ideals of $H$ are pairwise comparable (equivalently, the $s$-ideals of $H$ are pairwise comparable). Also note that if $H$ is a valuation monoid, then $s=r=t$ (i.e., the $s$-system is the unique finitary ideal system on $H$). Moreover, if $H\not=G$, then $H$ is called a {\it discrete valuation monoid} (or a {\it DVM}) if every $s$-ideal of $H$ is principal (equivalently, every prime $s$-ideal of $H$ is principal). We say that $H$ satisfies the {\it Principal Ideal Theorem} if for each nontrivial principal ideal $I$ of $H$ we have that $\mathcal{P}(I)\subseteq\mathfrak{X}(H)$. Finally, $H$ is called {\it $r$-local} if $H\setminus H^{\times}$ is an $r$-ideal of $H$ (equivalently, $|r$-$\max(H)|=1)$. Observe that if $H$ is $r$-local, then $\mathcal{C}_r(H)$ is trivial.

It is easy to see that if the radical of every nontrivial principal ideal of $H$ is $r$-invertible or every nontrivial principal ideal of $H$ is a finite $r$-product of radical $r$-ideals of $H$ (in particular if $H$ is radical factorial or an $r$-SP-monoid), then $H_M$ is radical factorial for each $M\in r$-$\max(H)$. (In the first case we can show that the radical of every principal ideal of $H_M$ is principal for each $M\in r$-$\max(H)$ and then apply \cite[Proposition 2.10]{R}.) The main purpose of this section is to present new characterizations of $r$-almost Dedekind monoids, $r$-almost Dedekind $r$-SP-monoids and $r$-B\'ezout $r$-SP-monoids.

\begin{proposition}\label{Prop 3.1} Let $r$ be a finitary ideal system on $H$ such that $H_M$ is radical factorial for each $M\in r$-$\max(H)$. Then $\bigcap_{P\in\mathfrak X(H)} H_P=H$, $H_Q$ is a DVM for each $Q\in\mathfrak X(H)$ and $\mathcal{P}(I)\subseteq\mathfrak X(H)$ for each $r$-invertible $r$-ideal $I$ of $H$.
\end{proposition}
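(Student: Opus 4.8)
The plan is to pass to localizations, where radical factorization forces one-dimensionality in the relevant spots, and then to globalize. Two facts make the localization step legitimate. First, a localization $K_S$ of a radical factorial monoid $K$ is again radical factorial: if $aK=R_1\cdots R_n$ with the $R_i$ radical and principal, then, since radicals commute with localization, a radical principal ideal is generated by a radical element, and products of $s$-ideals localize, $aK_S=(R_1K_S)\cdots(R_nK_S)$ is again such a product (after deleting the factors that become $K_S$). Second, we may assume $H\ne G$, since otherwise $\mathfrak{X}(H)=\varnothing$ and $\mathcal{P}(I)=\varnothing$ for every $r$-invertible $I$; then $z(H)$ is an $r$-ideal, so, recalling that $\mathcal{P}(J)\subseteq\mathcal{I}_r(H)$ for every $r$-ideal $J$, every $Q\in\mathfrak{X}(H)\subseteq\mathcal{P}(z(H))$ is an $r$-ideal, and likewise every $P\in\mathcal{P}(I)$ is an $r$-ideal when $I$ is $r$-invertible. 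Being proper prime $r$-ideals, such $Q,P$ are contained in some $M\in r$-$\max(H)$, so $H_Q=(H_M)_{QH_M}$ and $H_P=(H_M)_{PH_M}$ are radical factorial by the first fact and the hypothesis.

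For the second assertion, fix $Q\in\mathfrak{X}(H)$. Then $H_Q$ is radical factorial, $s$-local with maximal $s$-ideal $QH_Q$, and (contracting primes from $H$) $z(H_Q)$ and $QH_Q$ are its only prime $s$-ideals. Taking a nonzero nonunit $a\in QH_Q$ and writing $aH_Q=R_1\cdots R_n$ with each $R_i$ a proper radical principal ideal, each $R_i$ is a radical $s$-ideal whose only minimal prime is $QH_Q$, hence $R_i=\sqrt{R_i}=QH_Q$; so $QH_Q=R_1$ is principal and $aH_Q=(QH_Q)^n$. Thus every nonzero proper $s$-ideal of $H_Q$ is a power of the principal ideal $QH_Q$, i.e.\ $H_Q$ is a DVM.

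For the third assertion, let $I$ be $r$-invertible with $I\subsetneq H$ (otherwise $\mathcal{P}(I)=\varnothing$) and let $P\in\mathcal{P}(I)$. Since $H_P$ is $r_P$-local, $\mathcal{C}_{r_P}(H_P)$ is trivial, so $I_{r_P}=aH_P$ is principal; as $I\subseteq P$, the element $a$ is a nonzero nonunit and $PH_P$ is a minimal prime over $aH_P$. Hence it suffices to prove that a radical factorial monoid satisfies the Principal Ideal Theorem, i.e.\ that every minimal prime over a nonzero nonunit principal ideal has height one -- contracting this statement from $H_P$ back to $H$ then gives $P\in\mathfrak{X}(H)$. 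Localizing at such a minimal prime as before, one reduces to: if $K$ is $s$-local and radical factorial and its maximal $s$-ideal $\mathfrak{m}$ equals $\sqrt{aK}$ for some nonzero nonunit $a$, then $K$ is a DVM. As in the previous paragraph, $\mathfrak{m}=\pi K$ is principal and $aK=\pi^nK$; if there were a prime $\mathfrak{q}$ with $z(K)\subsetneq\mathfrak{q}\subsetneq\mathfrak{m}$, then $\pi\notin\mathfrak{q}$, so a nonzero nonunit $c\in\mathfrak{q}$ would satisfy $c=\pi c_1$ with $c_1\in\mathfrak{q}$, hence $c\in\bigcap_k\pi^kK$. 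Writing $cK=S_1\cdots S_m$ with the $S_j$ proper radical principal, one checks that each $S_j$ is either $\pi K$ or contained in $\bigcap_k\pi^kK$ (a proper radical principal ideal of the form $\pi^vK$ forces $v=1$, since $\pi\in\sqrt{\pi^vK}$). Ruling out the ``infinitely $\pi$-divisible'' factors -- which is the main obstacle, to be handled by a descent on the lengths of radical factorizations or by invoking the structure theory of radical factorial monoids from \cite{R,RR} -- leaves every $S_j=\pi K$, so $cK=\pi^mK$ while $c\in\pi^{m+1}K$, a contradiction.

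For the first assertion, $H=\bigcap_{M\in r\textnormal{-}\max(H)}H_M$ because $r$ is finitary. Fix $M$; then $H_M$ is radical factorial, so by the second and third assertions $(H_M)_Q$ is a DVM for each height-one prime $s$-ideal $Q$ of $H_M$ and $H_M$ satisfies the Principal Ideal Theorem. I would then show that $H_M$ is the intersection of the monoids $(H_M)_Q$ taken over the height-one prime $s$-ideals $Q$ of $H_M$: for $x=a/b$ in that intersection, the conductor $(bH_M:a)$ contains $bH_M$ but is contained in no height-one prime $s$-ideal of $H_M$; since every minimal prime over $bH_M$ has height one, and any minimal prime over $(bH_M:a)$ contains one of these, it follows that $(bH_M:a)=H_M$, i.e.\ $x\in H_M$. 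As every height-one prime $s$-ideal of $H_M$ is the extension of some $Q\in\mathfrak{X}(H)$ with $Q\subseteq M$, and then $(H_M)_Q=H_Q$, intersecting over all $M$ yields $H=\bigcap_{P\in\mathfrak{X}(H)}H_P$.
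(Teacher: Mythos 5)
Your overall route---reduce everything to local statements about radical factorial monoids (DVM at height-one primes, the Principal Ideal Theorem, the intersection formula) and then globalize over $r$-$\max(H)$---is the same as the paper's; the difference is that the paper gets all three local facts at once by citing \cite[Proposition 2.4]{R}, while you set out to reprove them, which is fine only if the local proofs are complete. They are not, in two places. First, in your proof of the Principal Ideal Theorem you explicitly leave open the exclusion of the ``infinitely $\pi$-divisible'' radical factors, deferring it to an unspecified descent or to \cite{R,RR}; as written this is a genuine gap, and it is the heart of the matter (it is exactly what fails for a rank-two valuation monoid with principal maximal ideal, which is indeed not radical factorial). The gap can, however, be closed in two lines: a generator of a proper radical principal ideal cannot be divisible by the square of a nonunit, since if $sK$ is radical and $s=\pi^2t$ with $\pi$ a nonunit, then $(\pi t)^2=st\in sK$, so $\pi t\in sK=\pi^2tK$, and cancelling the nonzero element $\pi t$ makes $\pi$ a unit. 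Because in your local monoid every nonunit is divisible by $\pi$, every proper radical principal factor $S_j$ is then equal to $\pi K$ outright, and your intended contradiction $cK=\pi^mK$ versus $c\in\pi^{m+1}K$ follows with no descent and no citation.

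Second, your argument for $H_M=\bigcap_Q(H_M)_Q$ contains an unacknowledged non sequitur: from ``every minimal prime over $bH_M$ has height one'' and ``every minimal prime over $(bH_M:a)$ contains such a prime'' you cannot conclude $(bH_M:a)=H_M$, because a minimal prime over the larger ideal $(bH_M:a)$ may strictly contain a height-one minimal prime of $bH_M$ without itself having height one; what your contradiction requires is that the proper conductor lie \emph{inside} some height-one prime, and nothing you prove gives that. An elementary correct substitute is induction on the number of radical factors of $b$: write $bH_M=rb'H_M$ with $rH_M$ radical; for $P\in\mathcal{P}(rH_M)\subseteq\mathfrak{X}(H_M)$ one has $a\in bH_P\subseteq rH_P$, hence $a\in rH_P\cap H_M\subseteq P$, so $a\in\bigcap_{P\in\mathcal{P}(rH_M)}P=\sqrt{rH_M}=rH_M$; writing $a=ra'$ and cancelling $r$ in $a\in rb'H_P$ for every height-one $P$ reduces to $b'$. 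Finally, a small slip: $\mathfrak{X}(H)\subseteq\mathcal{P}(z(H))$ is false, since $z(H)$ is itself a prime $s$-ideal and so $\mathcal{P}(z(H))=\{z(H)\}$; your conclusion that each $Q\in\mathfrak{X}(H)$ is an $r$-ideal (and hence lies below some $M\in r$-$\max(H)$, which both you and the paper need) is rescued by observing instead that $Q\in\mathcal{P}(xH)$ for any nonzero $x\in Q$.
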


\begin{proof} By \cite[Proposition 2.4]{R} we have for each $M\in r$-$\max(H)$ that $H_M=\bigcap_{P\in\mathfrak X(H_M)} (H_M)_P$, $(H_M)_Q$ is a DVM for each $Q\in\mathfrak X(H_M)$ and $\mathcal{P}(xH_M)\subseteq\mathfrak X(H_M)$ for each $x\in H_M^{\bullet}$. It is easy to see that $\mathfrak X(H_M)=\{P_M\mid P\in\mathfrak X(H), P\subseteq M\}$ for each $M\in r$-$\max(H)$.

\smallskip
We prove that $\bigcap_{P\in\mathfrak X(H)} H_P=H$. If $M\in r$-$\max(H)$, then

\[
H_M=\bigcap_{Q\in\mathfrak X(H_M)} (H_M)_Q=\bigcap_{P\in\mathfrak X(H), P\subseteq M} (H_M)_{P_M}=\bigcap_{P\in\mathfrak X(H), P\subseteq M} H_P.
\]

It follows that

\[
H=\bigcap_{M\in r\textnormal{-}\max(H)} H_M=\bigcap_{M\in r\textnormal{-}\max(H)}\bigcap_{P\in\mathfrak X(H), P\subseteq M} H_P=\bigcap_{P\in\mathfrak X(H)} H_P.
\]

\smallskip
Let $Q\in\mathfrak X(H)$. Then $Q_M\in\mathfrak X(H_M)$, and hence $H_Q=(H_M)_{Q_M}$ is a DVM.

\smallskip
Finally we show that $\mathcal{P}(I)\subseteq\mathfrak X(H)$ for each $r$-invertible $r$-ideal $I$ of $H$. Let $I$ be an $r$-invertible $r$-ideal of $H$ and $P\in\mathcal{P}(I)$. There is some $M\in r$-$\max(H)$ such that $P\subseteq M$. Observe that $I_M$ is a nontrivial principal ideal of $H_M$ and $P_M\in\mathcal{P}(I_M)\subseteq\mathfrak X(H_M)$. Therefore, there is some $P^{\prime}\in\mathfrak X(H)$ such that $P^{\prime}\subseteq M$ and $P_M=(P^{\prime})_M$. This implies that $P=P_M\cap H=(P^{\prime})_M\cap H=P^{\prime}\in\mathfrak X(H)$.
\end{proof}

Let $r$ be a finitary ideal system on $H$. The monoid $H$ is called {\it $r$-treed} if for all $M\in r$-$\max(H)$, it follows that the prime $r$-ideals of $H$ that are contained in $M$ form a chain. Moreover, $H$ is called an {\it $r$-almost Dedekind monoid} (or an {\it almost $r$-Dedekind monoid} in the terminology of \cite{R}) if $H=G$ or if $H_M$ is a DVM for each $M\in r$-$\max(H)$.

\begin{lemma}\label{Lem 3.2} Let $r$ be a finitary ideal system on $H$ such that every nontrivial prime $r$-ideal of $H$ contains an $r$-invertible radical $r$-ideal of $H$.
\begin{itemize}
\item[(1)] If the prime $r$-ideals of $H$ form a chain and $H\not=G$, then $H$ is a DVM.
\item[(2)] If $H$ is $r$-treed, then $H$ is an $r$-almost Dedekind $r$-SP-monoid.
\end{itemize}
\end{lemma}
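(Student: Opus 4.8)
The plan is: since the prime $r$-ideals form a chain and $H\ne G$, the monoid $H$ is $r$-local, so $\mathcal C_r(H)$ is trivial and every $r$-invertible $r$-ideal of $H$ is principal. Given a nontrivial prime $r$-ideal $P$, the hypothesis supplies an $r$-invertible radical $r$-ideal $J\subseteq P$; since the primes form a chain, $\mathcal P(J)$ is a singleton, so $J=\sqrt J$ is prime, whence $J=\pi H$ for a prime element $\pi$, and $\pi H$ is nontrivial because $z(H)$ is never $r$-invertible. Thus every nontrivial prime $r$-ideal contains a principal prime. Next I would carry out two cancellation arguments with $\pi$: (i) $\bigcap_n\pi^n H$ is a prime $s$-ideal, hence an $r$-ideal, and if it were nontrivial it would contain a principal prime $\rho H$ with $\rho\in\pi^nH$ for all $n$; cancelling powers of $\pi$ (using $\rho H\ne\pi H$ and primeness of $\rho H$) makes $\pi$ a unit, which is impossible, so $\bigcap_n\pi^nH=z(H)$; (ii) the same cancellation shows no nontrivial prime $r$-ideal lies strictly below $\pi H$. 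Combining (ii) with the chain hypothesis and the previous conclusion forces $\pi H$ to be the unique minimal nontrivial prime $r$-ideal, so $\pi H\subseteq\sqrt{aH}$ for every nonzero nonunit $a$ (here $\sqrt{aH}=\bigcap\mathcal P(aH)$ is a prime $r$-ideal because $\mathcal P(aH)$ is a singleton). From $\pi\in\sqrt{aH}$ we get $a\mid\pi^k$ for some $k$, and one more cancellation argument upgrades this to $\pi\mid a$. Iterating and invoking (i), every principal ideal of $H$ is one of $\pi^nH$, $z(H)$, $H$; taking least exponents, every $s$-ideal of $H$ is $\pi^nH$ or $z(H)$, so $H$ is a DVM.

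\textbf{Part (2), the almost Dedekind part.} Assume $H\ne G$ (otherwise there is nothing to prove) and fix $M\in r\text{-}\max(H)$. The prime $r_M$-ideals of $H_M$ are the localizations of the prime $r$-ideals of $H$ contained in $M$, and these form a chain because $H$ is $r$-treed; moreover the hypothesis localizes, since localization preserves $r$-invertibility and radicality of $r$-ideals. By part (1), $H_M$ is a DVM, so $H$ is $r$-almost Dedekind; in particular every $H_M$ is radical factorial, so Proposition~\ref{Prop 3.1} applies. Furthermore, since each $H_M$ has a single nontrivial prime $r_M$-ideal, every nontrivial prime $r$-ideal of $H$ is $r$-maximal and $\mathfrak X(H)=r\text{-}\max(H)$; the radical $r$-ideals of $H$ are precisely $H$, $z(H)$, and the intersections $\bigcap_{M\in\mathcal S}M$ with $\emptyset\ne\mathcal S\subseteq r\text{-}\max(H)$; and every $r$-ideal $I$ equals $\bigcap_M I_M$, where $I_M=(M_M)^{n_M(I)}$ for uniquely determined exponents $n_M(I)\in\mathbb N_0$.

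\textbf{Part (2), the SP part and the main obstacle.} It remains to show $H$ is an $r$-SP-monoid. From the description above, an $r$-ideal $I$ is a finite $r$-product of radical $r$-ideals if and only if $k:=\sup_M n_M(I)<\infty$: when $k<\infty$ one has $I=(R_1\cdots R_k)_r$ with $R_j=\bigcap\{M\in r\text{-}\max(H):n_M(I)\ge j\}$, verified by localizing, while any $r$-product of $l$ radical $r$-ideals has all local multiplicities at most $l$. Since $n_M(I)=\min_{x\in I}v_M(x)$ (with $v_M(x)$ the order of $x$ in the DVM $H_M$), it suffices to prove $\sup_M v_M(x)<\infty$ for every nonzero $x\in H$, and for this it is enough to know that $\sqrt{xH}$ is $r$-finitely generated: writing $\sqrt{xH}=\{b_1,\dots,b_p\}_r$ with $b_j^{k_j}\in xH$, for each $M\in\mathcal P(xH)$ some $b_j$ has $v_M(b_j)=1$ (because $\sqrt{xH}$ is radical and $M\supseteq\sqrt{xH}$), whence $v_M(x)\le k_jv_M(b_j)\le\max_j k_j$. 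I expect the main obstacle to be exactly the proof that $\sqrt{xH}$ is $r$-finitely generated: one must patch the finite generating sets of the $r$-invertible (hence $r$-finitely generated) radical $r$-ideals $J^{(M)}$ with $J^{(M)}_M=M_M$ ($M\in\mathcal P(xH)$) furnished by the hypothesis into a single finite generating set for $\sqrt{xH}$. This is the point where $r$-invertibility — not merely $r$-almost Dedekindness, which alone does not force $r$-SP — is indispensable.
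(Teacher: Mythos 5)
Your part (1) is correct and takes a more explicit route than the paper: the paper shows $H$ is factorial by a Zorn-type argument with the multiplicatively closed set of finite products of prime elements and then uses $|\mathfrak X(H)|=1$, whereas you construct the uniformizer $\pi$ directly and show every $s$-ideal is a power of $\pi H$. One small repair is needed: your justification that $\bigcap_n\pi^nH$ is an $r$-ideal ``because it is a prime $s$-ideal'' is a non sequitur (prime $s$-ideals need not be $r$-ideals), but the conclusion is immediate since $\bigcap_n\pi^nH$ is an intersection of principal ideals, hence of $r$-ideals. The reduction of (2) to (1) by localizing at each $M\in r$-$\max(H)$ is exactly the paper's argument and does give that $H$ is $r$-almost Dedekind.

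The genuine gap is the $r$-SP half of (2), which you yourself leave open. The paper disposes of it by citing \cite[Corollary 3.4]{R}; you instead sketch a reduction and then declare the decisive step --- that $\sqrt{xH}$ is $r$-finitely generated, equivalently that the local multiplicities of a nonzero $x$ are bounded --- to be ``the main obstacle,'' offering only the hope of ``patching'' finite generating sets of the invertible radical ideals supplied by the hypothesis. That is precisely where the content of the lemma lies, so the proof is incomplete at its crux. Moreover, the intermediate equivalence you assert, that a nontrivial $r$-ideal $I$ is a finite $r$-product of radical $r$-ideals if and only if $k=\sup_M n_M(I)<\infty$, with canonical factors $R_j=\bigcap\{M\in r\text{-}\max(H)\mid n_M(I)\ge j\}$ ``verified by localizing,'' is not justified by $r$-almost Dedekindness alone: the localization check requires $R_j\nsubseteq M$ whenever $n_M(I)<j$, and localization does not commute with the infinite intersections defining $R_j$; an infinite intersection of $r$-maximal $r$-ideals can be contained in further $r$-maximal $r$-ideals. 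This is exactly the critical-maximal-ideal phenomenon that makes $r$-almost Dedekind monoids fail to be $r$-SP in general, so bounded multiplicities cannot serve as the criterion unless the hypothesis on $r$-invertible radical $r$-ideals is brought in at this step as well --- and your sketch never does so. As it stands, the SP statement in (2) is not proved.
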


\begin{proof} (1) Let the prime $r$-ideals of $H$ form a chain and let $H\not=G$. Then $H$ is $r$-local, and thus every $r$-invertible $r$-ideal of $H$ is principal. Moreover, every radical $r$-ideal of $H$ is a prime $r$-ideal of $H$. Therefore, every nontrivial prime $r$-ideal of $H$ contains a nontrivial prime principal ideal of $H$. Let $\Omega$ be the set of all elements of $H$ which can be represented as a product of a unit of $H$ times a (possibly empty) finite product of nonzero prime elements of $H$. Assume that $H$ is not factorial. Then there is some nonzero $x\in H\setminus\Omega$. It is straightforward to show that $xH\cap\Omega=\emptyset$. Since $\Omega$ is a multiplicatively closed subset of $H$, $xH$ is an $r$-ideal of $H$ and $r$ is finitary, we infer that $xH\subseteq P$ and $P\cap\Omega=\emptyset$ for some prime $r$-ideal $P$ of $H$. Since $P$ contains a nonzero prime principal ideal of $H$, we have that $P\cap\Omega\not=\emptyset$, a contradiction. This implies that $H$ is a factorial monoid. Since the prime $r$-ideals of $H$ form a chain, we have that $|\mathfrak X(H)|=1$, and thus $H$ is a DVM.

(2) Let $H$ be $r$-treed and $M\in r$-$\max(H)$. Clearly, the prime $r_M$-ideals of $H_M$ form a chain and every nontrivial prime $r_M$-ideal of $H_M$ contains an $r_M$-invertible radical $r_M$-ideal of $H_M$. Therefore, $H_M$ is a DVM by (1), and hence $H$ is an $r$-almost Dedekind monoid. It follows from \cite[Corollary 3.4]{R} that $H$ is an $r$-SP-monoid.
\end{proof}

\begin{lemma}\label{Lem 3.3} Let $r$ be a finitary ideal system on $H$, $k\in\mathbb{N}$, $P\in\mathfrak X(H)$ and $I_i$ a nontrivial radical $r$-ideal of $H$ for each $i\in [1,k+1]$ such that $\bigcup_{i=1}^{k+1} I_i\subseteq P$. Then $(\prod_{i=1}^{k+1} I_i)_r$ does not contain the $k$-th power of any nonzero radical element of $H$.
\end{lemma}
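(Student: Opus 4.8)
The plan is to argue by contradiction. Suppose there is a nonzero radical element $a$ of $H$ with $a^k\in(\prod_{i=1}^{k+1}I_i)_r$, and work toward a contradiction by localizing at $P$.

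First I would note that since each $I_j$ is an $s$-ideal, the product $I_1\cdots I_{k+1}$ is contained in $I_j$ for every $j$, hence in $J:=\bigcap_{j=1}^{k+1}I_j$; and since an intersection of $r$-ideals is an $r$-ideal, $J$ is an $r$-ideal, so $(\prod_{i=1}^{k+1}I_i)_r\subseteq J$. (This step is needed because $P$ itself need not be an $r$-ideal, so one cannot simply replace $P$ by $P_r$.) Thus $a^k\in I_j$ for each $j$, and as $I_j$ is radical, $a\in I_j$ for each $j$; in particular $a\in I_1\subseteq P$. Since $a\neq 0$ and $P$ is a proper $s$-ideal, cancellativity gives that $a/1$ is a nonzero non-unit element of $H_P$, it is radical (radicals commute with localization), and $a/1\in(I_j)_P$ for every $j$.

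Next I would exploit $P\in\mathfrak X(H)$. Under the correspondence of prime $s$-ideals, $P_P$ is the maximal $s$-ideal of $H_P$, and $P_P$ is the only prime $s$-ideal of $H_P$ that properly contains $z(H_P)$ (by minimality of $P$ among prime $s$-ideals of $H$ properly containing $z(H)$). Hence every proper radical $s$-ideal of $H_P$ that properly contains $z(H_P)$ equals $P_P$, since such an ideal is the intersection of the prime $s$-ideals minimal above it and each of those properly contains $z(H_P)$. Applying this to $(I_j)_P$ (which is radical, is proper because $I_j\subseteq P$, and properly contains $z(H_P)$ because it contains $a/1$) and to $aH_P$ (radical because $a$ is, proper because $a\in P$, nontrivial because $a/1\neq 0$) yields $(I_1)_P=\cdots=(I_{k+1})_P=aH_P=P_P$.

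Finally I would push the hypothesis through the localization. Since $(\prod_{i=1}^{k+1}I_i)_r$ localizes to $(\prod_{i=1}^{k+1}(I_i)_P)_{r_P}$, we get
\[
a^k/1\in\Big(\prod_{i=1}^{k+1}(I_i)_P\Big)_{r_P}=\big((aH_P)^{k+1}\big)_{r_P}=(a^{k+1}H_P)_{r_P}=a^{k+1}H_P,
\]
the last equality because a principal ideal is an $r_P$-ideal. Hence $a^kH_P=a^{k+1}H_P=a\cdot(a^kH_P)$, and since $a^k$ is a nonzero element of the cancellative monoid $H_P$, this forces $a$ to be a unit of $H_P$, contradicting $a\in P$. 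I expect the main obstacle to be the middle paragraph: one must correctly extract from $P\in\mathfrak X(H)$ that, after localizing, every proper nontrivial radical $s$-ideal collapses to the maximal $s$-ideal. Once that structural fact is established the remaining computation in $H_P$ is short; the only other point that needs a little care is the bookkeeping relating $z(H)$ and $z(H_P)$ when checking that $a/1$ remains nonzero.
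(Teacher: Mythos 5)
Your proof is correct and takes essentially the same route as the paper's: localize at $P$, identify each $(I_j)_P$ and $aH_P$ with $P_P$ using that $P\in\mathfrak X(H)$, and deduce $a^kH_P=a^{k+1}H_P$, which by cancellativity makes $a$ a unit of $H_P$, contradicting $a\in P$. The additional details you spell out (passing through $\bigcap_j I_j$ to get $a\in P$ without assuming $P$ is an $r$-ideal, and the collapse of nontrivial proper radical $s$-ideals of $H_P$ to $P_P$) are precisely the steps the paper leaves implicit.
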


\begin{proof} Suppose to the contrary that there is some nonzero radical element $x\in H$ such that $x^k\in (\prod_{i=1}^{k+1} I_i)_r$. We infer that $x\in P$. It follows that $P_P=xH_P=(I_j)_P$ for each $j\in [1,k+1]$, and hence $P_P^k=x^kH_P\subseteq ((\prod_{j=1}^{k+1} I_j)_r)_P=(\prod_{j=1}^{k+1} (I_j)_P)_{r_P}=(x^{k+1}H_P)_{r_P}=x^{k+1}H_P$. Therefore, $x^kH_P=x^{k+1}H_P$, and hence $x\in H_P^{\times}$, a contradiction.
\end{proof}

\begin{proposition}\label{Prop 3.4} Let $r$ be a finitary ideal system on $H$ and let the radical of every principal ideal of $H$ be principal.
\begin{itemize}
\item[(1)] For each nontrivial $r$-finitely generated $r$-ideal $I$ of $H$ there is some nonzero $z\in H$ such that $\{P\in\mathfrak X(H)\mid I\subseteq P\}=\{P\in\mathfrak X(H)\mid z\in P\}$.
\item[(2)] $\mathcal{C}_r(H)$ is trivial.
\end{itemize}
\end{proposition}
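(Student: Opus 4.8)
The plan is to prove (1) by a direct computation with the radicals of a finite generating set of $I$, and to prove (2) by reducing to the claim that every $r$-invertible $r$-ideal is principal and then iteratively peeling off radical principal ideals. For (1): since $I$ is nontrivial and $r$-finitely generated, write $I=E_r$ with $E=\{a_1,\dots,a_n\}$ a finite subset of $H^{\bullet}$ (a zero element of a finite generating set of $I$ may be discarded). By hypothesis $\sqrt{a_iH}=b_iH$ for some $b_i\in H^{\bullet}$, and $z:=b_1\cdots b_n\in H^{\bullet}$. I would first observe that every $P\in\mathfrak X(H)$ is an $r$-ideal: picking $0\ne a\in P$, the set $\sqrt{aH}$ is a principal ideal, hence an $r$-ideal, it is contained in $P$, and $P$ is minimal above it since $P$ is minimal among prime $s$-ideals properly containing $z(H)$; thus $P\in\mathcal{P}(\sqrt{aH})\subseteq\mathcal{I}_r(H)$. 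Then for $P\in\mathfrak X(H)$ one has $I\subseteq P$ precisely when all $a_i\in P$ (using that $P$ is an $r$-ideal), precisely when all $b_i\in P$ (using $P=\sqrt P$ and $a_i\in b_iH=\sqrt{a_iH}$), precisely when $z\in P$ (using that $P$ is prime); this is the assertion.

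For (2): recall that $\mathcal{C}_r(H)$ is trivial if and only if every $r$-invertible $r$-ideal of $H$ is principal, so let $I$ be such an ideal, which we may assume is nontrivial. The hypothesis forces $H_M$ to be radical factorial for every $M\in r$-$\max(H)$ (as noted before Proposition~\ref{Prop 3.1}), so Proposition~\ref{Prop 3.1} gives $H=\bigcap_{P\in\mathfrak X(H)}H_P$, each $H_P$ a DVM, and $\mathcal{P}(J)\subseteq\mathfrak X(H)$ for every $r$-invertible $r$-ideal $J$. Being $r$-invertible, $I$ is $r$-finitely generated, so (1) yields $z\in H^{\bullet}$ with $\{P\in\mathfrak X(H):I\subseteq P\}=\{P\in\mathfrak X(H):z\in P\}$; since $\mathcal{P}(I)$ and $\mathcal{P}(zH)$ lie in $\mathfrak X(H)$, this gives $\mathcal{P}(I)=\mathcal{P}(zH)$, hence $\sqrt I=\sqrt{zH}$, which is principal by hypothesis, say $\sqrt I=c_0H$.

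Now $I\subseteq\sqrt I=c_0H$, so $I_1:=c_0^{-1}I$ is again a nontrivial $r$-invertible $r$-ideal with $I\subseteq I_1$ and $[I_1]=[I]$ in $\mathcal{C}_r(H)$; iterating, I obtain an ascending chain $I=I_0\subseteq I_1\subseteq I_2\subseteq\cdots$ of nontrivial $r$-invertible $r$-ideals with $I_{j+1}=c_j^{-1}I_j$ and $c_jH=\sqrt{I_j}$ principal (again by (1) and the hypothesis), all with $[I_j]=[I]$. Fix $M\in r$-$\max(H)$. Then $I_M$ is a principal ideal of $H_M$ (as $\mathcal{C}_{r_M}(H_M)$ is trivial, $H_M$ being $r_M$-local) and $(I_{j+1})_M=(\sqrt{(I_j)_M})^{-1}(I_j)_M$; since $H_M$ is radical factorial, $I_M$ is a finite product of radical principal ideals of $H_M$, and passing from an ideal to $(\sqrt{\cdot})^{-1}(\cdot)$ strictly decreases the minimal number of radical principal factors, so $(I_j)_M=H_M$ for all sufficiently large $j$. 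Hence $\big(\bigcup_jI_j\big)_M=\bigcup_j(I_j)_M=H_M$ for every $M$; as $\bigcup_jI_j$ is an $r$-ideal (because $r$ is finitary) it equals the intersection of its localizations at $r$-$\max(H)$, so $\bigcup_jI_j=H$. Therefore $1\in I_j$, i.e.\ $I_j=H$, for some $j$, and then $I=c_0c_1\cdots c_{j-1}H$ is principal, so $\mathcal{C}_r(H)$ is trivial.

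The step I expect to require the most care is the reduction from ``$(I_j)_M=H_M$ eventually, for each $M$'' to ``$\bigcup_jI_j=H$'': verifying that each peeling step truly lowers the local radical-factorial length, and that an ascending union of $r$-ideals is again an $r$-ideal equal to the intersection of its $r$-$\max(H)$-localizations. This is precisely where finitariness of $r$ and, above all, the radical factoriality of every localization $H_M$ (which follows from the hypothesis via \cite[Proposition 2.10]{R}) are indispensable; without radical factoriality the iteration need not terminate, even though the conclusion would still hold.
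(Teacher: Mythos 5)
Your proof of part (1) contains a genuine error at the last step. With $z=b_1\cdots b_n$ and $P\in\mathfrak X(H)$ prime, the equivalence you need is ``$b_i\in P$ for \emph{all} $i$ $\iff$ $z\in P$'', but primality of $P$ only yields ``$z\in P$ $\iff$ $b_i\in P$ for \emph{some} $i$''. Thus your $z$ describes the \emph{union} $\bigcup_i\{P\in\mathfrak X(H)\mid b_i\in P\}$, whereas $\{P\in\mathfrak X(H)\mid I\subseteq P\}$ is the \emph{intersection} $\bigcap_i\{P\in\mathfrak X(H)\mid b_i\in P\}$. Concretely, take $H=\mathbb{Z}\smallsetminus\{0\}$ (or $R=\mathbb{Z}$ with the $d$-system), $E=\{4,6\}$, so $I=E_r=2H$, $b_1=2$, $b_2=6$, $z=12$: the left-hand set is $\{2H\}$ while $\{P\in\mathfrak X(H)\mid 12\in P\}=\{2H,3H\}$. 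Producing a single element whose minimal primes are the \emph{common} minimal primes of the generators is exactly the nontrivial content of (1); the paper handles it with a careful construction (its Claims 1 and 2): for two generators $x,y$ one writes $\sqrt{xyH}=aH$, $\sqrt{xH}=bH$, $\sqrt{yH}=cH$, notes $b\mid a$ and $c\mid a$, sets $dH=\sqrt{\tfrac{a^2}{bc}H}$ and $z=\tfrac{a}{d}$, and uses a lemma on quotients of radical elements to show $\{P\mid x,y\in P\}=\{P\mid z\in P\}$; then one inducts. Your preliminary observations (that each $P\in\mathfrak X(H)$ is an $r$-ideal, hence $I\subseteq P$ iff all $a_i\in P$ iff all $b_i\in P$) are fine, but the product construction cannot be repaired without something like the paper's division trick, since the hypothesis gives no direct handle on intersections beyond $\sqrt{xyH}=bH\cap cH$.

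Part (2) inherits this gap, since your proof that $\sqrt{I_j}$ is principal at each peeling step rests on (1). Setting that aside, your strategy for (2) is genuinely different from the paper's: you terminate the peeling process locally at each $M\in r$-$\max(H)$ using radical factoriality of $H_M$, and then globalize via $\bigcup_j I_j$ being an $r$-ideal equal to $\bigcap_M\bigl(\bigcup_j I_j\bigr)_M$. The globalization step is sound for a finitary system, but the key local assertion --- that passing from $(I_j)_M$ to $\bigl(\sqrt{(I_j)_M}\bigr)^{-1}(I_j)_M$ strictly decreases the minimal number of radical principal factors --- is itself nontrivial and is left unproved; the natural argument uses Proposition~\ref{Prop 3.1} (so that $H_M=\bigcap_{P\in\mathfrak X(H_M)}(H_M)_P$ with each $(H_M)_P$ a DVM) and the fact that radical elements have value at most one at every height-one prime, which is in effect a local version of the paper's Lemma~\ref{Lem 3.3}. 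The paper avoids localization altogether: it peels off global radical generators $z_1,z_2,\dots$ and shows directly, via Lemma~\ref{Lem 3.3} applied to a minimal prime over $z_{k+1}H$, that the process stops after $k$ steps when $z_1^k\in J$. So even after (1) is repaired, your write-up of (2) would need the length-decreasing (or value-decreasing) claim proved in detail.
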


\begin{proof} (1) \textsc{Claim 1.} If $a,b$ are nonzero radical elements of $H$ such that $b$ divides $a$, then

\[
\left\{P\in\mathfrak X(H)\mid\frac{a}{b}\in P\right\}=\{P\in\mathfrak X(H)\mid a\in P, b\not\in P\}.
\]

\smallskip
To prove Claim 1 let $a,b\in H$ be nonzero radical elements of $H$ such that $b$ divides $a$. First let $P\in\mathfrak X(H)$ be such that $\frac{a}{b}\in P$. It is obvious that $a\in P$. Since $aH_P$ is a nonzero radical ideal of $H_P$ we have that $aH_P=P_P$. Suppose that $b\in P$. Then $a\in P^2$, and hence $aH_P=P_P=P_P^2=a^2H_P$. Therefore, $a\in H_P^{\times}$, a contradiction. We infer that $b\not\in P$. The converse inclusion is trivially satisfied.\qed(Claim 1)

\smallskip
\textsc{Claim 2.} For all nonzero $x,y\in H$ there is some nonzero $z\in H$ such that $\{P\in\mathfrak X(H)\mid (xH\cup yH)_r\subseteq P\}=\{P\in\mathfrak X(H)\mid z\in P\}$.

\smallskip
To prove Claim 2 let $x,y\in H$ be nonzero. There exist nonzero radical elements $a,b,c\in H$ such that $\sqrt{xyH}=aH$, $\sqrt{xH}=bH$ and $\sqrt{yH}=cH$. We have that $aH=bH\cap cH$. Moreover, $\sqrt{\frac{a}{b}H}\cap\sqrt{\frac{a}{c}H}=\sqrt{\frac{a^2}{bc}H}=dH$ for some nonzero radical element $d\in H$. Set $z=\frac{a}{d}$.

It follows by Claim 1 that $\{P\in\mathfrak X(H)\mid d\in P\}=\{P\in\mathfrak X(H)\mid\frac{a}{b}\in P\}\cup\{P\in\mathfrak X(H)\mid\frac{a}{c}\in P\}=\{P\in\mathfrak X(H)\mid a\in P, (b\not\in P$ or $c\not\in P)\}$.

We infer by Claim 1 that $\{P\in\mathfrak X(H)\mid x,y\in P\}=\{P\in\mathfrak X(H)\mid b,c\in P\}=\{P\in\mathfrak X(H)\mid a\in P, d\not\in P\}=\{P\in\mathfrak X(H)\mid z\in P\}$.\qed(Claim 2)

The statement now follows by induction from Claim 2.

\smallskip
(2) \textsc{Claim.} The radical of every $r$-invertible $r$-ideal of $H$ is principal.

\smallskip
To prove the claim let $I$ be an $r$-invertible $r$-ideal of $H$. By Proposition~\ref{Prop 3.1}, we have that $\mathcal{P}(I)\subseteq\mathfrak X(H)$. It follows by (1) that there is some nonzero $z\in H$ such that $\mathcal{P}(I)=\{P\in\mathfrak X(H)\mid I\subseteq P\}=\{P\in\mathfrak X(H)\mid z\in P\}=\mathcal{P}(zH)$, and hence $\sqrt{I}=\sqrt{zH}$ is a principal ideal of $H$.\qed(Claim)

\smallskip
Now let $J$ be an $r$-invertible $r$-ideal of $H$. By the claim there is some nonzero radical $z_1\in H$ such that $\sqrt{J}=z_1H$. Therefore, $z_1^k\in J$ for some $k\in\mathbb{N}$.

Next we recursively construct nonzero radical elements $z_i$ of $H$ such that $z_iH=\sqrt{(\prod_{j=1}^{i-1} z_j)^{-1}J}$ for each $i\in [1,k+1]$. Note that $z_1H=\sqrt{(\prod_{j=1}^{1-1} z_j)^{-1}J}$. Now let $i\in [1,k]$ and suppose that we have already constructed the first $i$ elements. It follows that $(\prod_{j=1}^{i-1} z_j)^{-1}J\subseteq z_iH$, and thus $(\prod_{j=1}^i z_j)^{-1}J\subseteq H$. Set $L=(\prod_{j=1}^i z_j)^{-1}J$. Then $(\prod_{j=1}^i z_j)L_r=((\prod_{j=1}^i z_j)L)_r=J_r=J$, and hence $L$ is an $r$-ideal of $H$. Since  $J=(L\prod_{j=1}^i z_jH)_r$ and $J$ is $r$-invertible, we infer that $L$ is $r$-invertible. By the claim there is some nonzero radical $z_{i+1}\in H$ such that $\sqrt{L}=z_{i+1}H$. This completes the construction.

Assume that $z_{k+1}\not\in H^{\times}$. Then there is some $P\in\mathcal{P}(z_{k+1}H)$. It follows from Proposition~\ref{Prop 3.1} that $P\in\mathfrak{X}(H)$. Observe that $z_iH\subseteq z_{i+1}H$ for each $i\in [1,k]$, and thus $\bigcup_{i=1}^{k+1} z_iH\subseteq P$. Moreover, we have that $z_1^k\in J\subseteq\prod_{j=1}^{k+1} z_jH=(\prod_{j=1}^{k+1} z_jH)_r$, which contradicts Lemma~\ref{Lem 3.3}. Therefore, $z_{k+1}\in H^{\times}$, and hence $\sqrt{(\prod_{j=1}^k z_j)^{-1}J}=z_{k+1}H=H$. This implies that $(\prod_{j=1}^k z_j)^{-1}J=H$. Consequently, $J=(\prod_{j=1}^k z_j)H$ is a principal ideal of $H$.
\end{proof}

\begin{proposition}\label{Prop 3.5} Let $H\not=G$ and $r$ a finitary ideal system on $H$ and let $H$ be $r$-local such that the radical of every $r$-finitely generated $r$-ideal of $H$ is principal. Then $H$ is a DVM.
\end{proposition}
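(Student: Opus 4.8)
The plan is to put $H$ into the setting of Proposition~\ref{Prop 3.1} and then prove that $\mathfrak{X}(H)$ is a singleton.

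Since every nontrivial principal ideal of $H$ is a nontrivial $r$-finitely generated $r$-ideal, the hypothesis gives that the radical of every nontrivial principal ideal of $H$ is principal; as $H$ is $r$-local with $H_M=H$ for the unique $r$-maximal $r$-ideal $M=H\ssm H^{\times}$, the remarks preceding Proposition~\ref{Prop 3.1} show that $H$ is radical factorial. By Proposition~\ref{Prop 3.1} we then have $H=\bigcap_{P\in\mathfrak{X}(H)}H_P$, each $H_P$ is a DVM, and $\mathcal{P}(I)\subseteq\mathfrak{X}(H)$ for every $r$-invertible $r$-ideal $I$; in particular $\mathfrak{X}(H)\neq\emptyset$, and $\mathcal{P}(xH)=\{P\in\mathfrak{X}(H):x\in P\}$ for every nonzero nonunit $x$ of $H$. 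The key point, and where the hypothesis is used for $r$-finitely generated (not merely principal) $r$-ideals, is the following finite intersection property: if $x_1,\dots,x_n$ are nonzero nonunits of $H$, then $\bigcap_{i=1}^n\mathcal{P}(x_iH)\neq\emptyset$. Indeed, $I:=(x_1H\cup\cdots\cup x_nH)_r$ satisfies $I\subseteq M$, so $I$ is a proper $r$-finitely generated $r$-ideal; by hypothesis $\sqrt{I}$ is principal, hence (being nonzero) $r$-invertible, so $\mathcal{P}(I)=\mathcal{P}(\sqrt{I})\subseteq\mathfrak{X}(H)$, and $\mathcal{P}(I)\neq\emptyset$ because $I$ is proper, while every $P\in\mathcal{P}(I)$ contains each $x_i$.

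Next I would show that every $P\in\mathfrak{X}(H)$ is a principal ideal. Suppose this fails for some $P\in\mathfrak{X}(H)$. Since each $\sqrt{(c_1H\cup\cdots\cup c_mH)_r}$ with $c_1,\dots,c_m\in P$ is principal by hypothesis while $P$ is not, one can recursively choose $c_i\in P$ and set $w_iH:=\sqrt{(c_1H\cup\cdots\cup c_iH)_r}$ so that $w_1H\subsetneq w_2H\subsetneq\cdots$ is a strictly increasing chain of principal radical $r$-ideals contained in $P$ (at step $i$, pick $c_{i+1}\in P\ssm w_iH$, which is possible as $w_iH$ is principal and $P$ is not). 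Each $w_i$ is a nonzero radical element of $P$, so $v_P(w_i)=1$ for the valuation $v_P$ of the DVM $H_P$, while $v_Q(w_{i+1})\le v_Q(w_i)$ for every $Q\in\mathfrak{X}(H)$. Put $g_i:=w_i/w_{i+1}\in H$; then $v_Q(g_i)\in\{0,1\}$ for all $Q$, so—using $H=\bigcap_{Q}H_Q$ with each $H_Q$ a DVM—each $g_i$ is a radical element, it is a nonunit (as $w_iH\subsetneq w_{i+1}H$), one has $g_i\notin P$ (since $v_P(g_i)=0$), and $\mathcal{P}(g_iH)=\mathcal{P}(w_iH)\ssm\mathcal{P}(w_{i+1}H)$. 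As $\mathcal{P}(w_1H)\supseteq\mathcal{P}(w_2H)\supseteq\cdots$, the sets $\mathcal{P}(g_iH)$ are pairwise disjoint; in particular $\mathcal{P}(g_1H)\cap\mathcal{P}(g_2H)=\emptyset$, contradicting the finite intersection property. Hence every $P\in\mathfrak{X}(H)$ is principal.

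To finish: if $P_1\neq P_2$ were distinct elements of $\mathfrak{X}(H)$, then writing $P_i=p_iH$ we would have $\mathcal{P}(p_iH)=\{P_i\}$ (each $P_i$ being its own unique minimal prime), so $\mathcal{P}(p_1H)\cap\mathcal{P}(p_2H)=\emptyset$, again contradicting the finite intersection property. Therefore $\mathfrak{X}(H)=\{Q_0\}$ for a single $Q_0$, and then $H=\bigcap_{Q\in\mathfrak{X}(H)}H_Q=H_{Q_0}$ is a DVM. The main obstacle is the middle step: recognizing the finite intersection property and exploiting it through the disjoint-support construction coming from the chain $(w_iH)$, together with the verification that the quotients $g_i$ are genuine radical elements of $H$; the rest reduces cleanly to Proposition~\ref{Prop 3.1}.
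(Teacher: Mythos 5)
Your proof is correct, but it takes a genuinely different route from the paper's. The paper also enters through Proposition~\ref{Prop 3.1} (via the remark that the hypothesis forces $H$ itself to be radical factorial, since $H$ is $r$-local), but it then proves that $H$ is a valuation monoid: assuming $xH$ and $yH$ are incomparable, it recursively extracts radical elements $z_i$ with $z_iH=\sqrt{\big(\frac{x}{\prod_{j<i} z_j}H\cup\frac{y}{\prod_{j<i} z_j}H\big)_r}$, uses $r$-locality to see that each $z_i$ is a nonunit, obtains $z_1^k\in(\prod_{i=1}^{k+1}z_iH)_r$ with all $z_iH$ inside one $P\in\mathfrak X(H)$, and contradicts Lemma~\ref{Lem 3.3}; it finishes with Lemma~\ref{Lem 3.2}(1). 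You instead isolate a finite intersection property for the supports $\mathcal{P}(xH)\subseteq\mathfrak X(H)$ (this is where $r$-locality and the hypothesis on $r$-finitely generated ideals enter, exactly as in the paper), use a chain of principal radical ideals inside a putative non-principal $P\in\mathfrak X(H)$ to manufacture nonunits $g_i=w_i/w_{i+1}$ with pairwise disjoint supports, and conclude first that every $P\in\mathfrak X(H)$ is principal, then that $|\mathfrak X(H)|=1$, whence $H=\bigcap_{P\in\mathfrak X(H)}H_P=H_{Q_0}$ is a DVM. Your route avoids Lemma~\ref{Lem 3.3} and Lemma~\ref{Lem 3.2}(1) entirely, replacing the power-counting contradiction by a support-disjointness contradiction, and never needs the intermediate claim that $H$ is a valuation monoid; the paper's argument is shorter on the page only because those two lemmas are already available and reused elsewhere. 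Two points you leave implicit but which are routine from the paper's preliminaries: each $P\in\mathfrak X(H)$ is an $r$-ideal (being minimal over a nontrivial principal ideal, and $\mathcal{P}(I)\subseteq\mathcal{I}_r(H)$), which is what gives $w_iH\subseteq P$ in your recursion; and $v_Q(w_i)\in\{0,1\}$ because a radical $s$-ideal localizes to a radical ideal of the DVM $H_Q$.
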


\begin{proof} By Proposition~\ref{Prop 3.1}, $H$ satisfies the Principal Ideal Theorem. Assume that $H$ is not a valuation monoid. Then there exist $x,y\in H$ such that $xH\nsubseteq yH$ and $yH\nsubseteq xH$. Using the fact that the radical of every $r$-finitely generated $r$-ideal of $H$ is principal, we can recursively construct nonzero radical elements $z_i$ of $H$ such that for every $i\in\mathbb{N}$,

\[
\prod_{j=1}^{i-1} z_j\mid x\textnormal{, }\prod_{j=1}^{i-1} z_j\mid y\textnormal{ and }z_iH=\sqrt{\Bigg(\frac{x}{\prod_{j=1}^{i-1} z_j}H\cup\frac{y}{\prod_{j=1}^{i-1} z_j}H\Bigg)_r}.
\]

For $i\in\mathbb{N}$ set $w_i=\frac{x}{\prod_{j=1}^{i-1} z_j}$ and $v_i=\frac{y}{\prod_{j=1}^{i-1} z_j}$. Observe that if $i\in\mathbb{N}$, then $w_iH$ and $v_iH$ are not comparable, and hence $w_i,v_i\in H\setminus H^{\times}$. Since $H$ is $r$-local, we infer that $z_i\in H\setminus H^{\times}$ for all $i\in\mathbb{N}$. There is some $k\in\mathbb{N}$ such that $z_1^k\in (xH\cup yH)_r=(\prod_{i=1}^k z_i)(w_{k+1}H\cup v_{k+1}H)_r\subseteq (\prod_{i=1}^{k+1} z_iH)_r$. Also note that $\bigcup_{i=1}^{k+1} z_iH=z_{k+1}H\subseteq P$ for some $P\in\mathfrak X(H)$, which contradicts Lemma~\ref{Lem 3.3}.

Consequently, $H$ is a valuation monoid. It follows by Lemma~\ref{Lem 3.2}(1) that $H$ is a DVM.
\end{proof}

Let $r$ be a finitary ideal system on $H$. We say that $H$ satisfies the {\it $r$-prime power condition} if every primary $r$-ideal of $H$ is an $r$-power of its radical. Note that every $r$-SP-monoid satisfies the $r$-prime power condition (see \cite[Proposition 3.10(1)]{R}). Moreover, $H$ satisfies the {\it strong $r$-prime power condition} if every $r$-ideal of $H$ with prime radical is an $r$-power of its radical. Finally, $H$ is called {\it primary $r$-ideal inclusive} if for all $P,Q\in r$-${\rm spec}(H)$ such that $P\subsetneq Q$ it follows that $P\subseteq I\subsetneq\sqrt{I}\subseteq Q$ for some primary $r$-ideal $I$ of $H$. Now let $I$ be an $r$-ideal of $H$. We say that $I$ is {\it $r$-cancellative} if for all $r$-ideals $J$ and $L$ of $H$ such that $(IJ)_r=(IL)_r$ it follows that $J=L$. Moreover, $I$ is called {\it $r$-half cancellative} (or {\it $r$-unit-cancellative}) if for all $J\in\mathcal{I}_r(H)$ with $I=(IJ)_r$ it follows that $J=H$.

\smallskip
Let $T\subseteq H^{\bullet}$ a multiplicatively closed subset. Note that if $H$ satisfies the (strong) $r$-prime power condition, then $T^{-1}H$ satisfies the (strong) $T^{-1}r$-prime power condition. Moreover, if $H$ is primary $r$-ideal inclusive, then $T^{-1}H$ is primary $T^{-1}r$-ideal inclusive. (By \cite[Lemma 3.8]{R} it remains to show that if $H$ satisfies the strong $r$-prime power condition, then $T^{-1}H$ satisfies the strong $T^{-1}r$-prime power condition. Let $H$ satisfy the strong $r$-prime power condition and let $J$ be a $T^{-1}r$-ideal of $T^{-1}H$ with prime radical. Set $I=J\cap H$. Then $I$ is an $r$-ideal of $H$ and $J=T^{-1}I$. Since $\sqrt[T^{-1}H]{J}$ is a prime $T^{-1}r$-ideal of $T^{-1}H$, we have that $\sqrt{I}=\sqrt[T^{-1}H]{J}\cap H$ is a prime $r$-ideal of $H$. Observe that $T^{-1}\sqrt{I}=\sqrt[T^{-1}H]{T^{-1}I}=\sqrt[T^{-1}H]{J}$. Therefore, $I=((\sqrt{I})^k)_r$ for some $k\in\mathbb{N}$, and thus $J=T^{-1}I=((\sqrt[T^{-1}H]{J})^k)_{T^{-1}r}$.) In what follows, we use the remarks of this paragraph without further citation.

\begin{proposition}\label{Prop 3.6} $[$cf. \cite[Theorem 2.14 and Proposition 2.16]{BFP}, \cite[Theorem 1.1]{BP} and \cite[Theorems 4.5 and 4.6]{K}$]$ Let $H\not=G$ and $r$ a finitary ideal system on $H$. The following are equivalent:
\begin{itemize}
\item[(1)] $H$ is an $r$-almost Dedekind monoid.
\item[(2)] $H$ satisfies the strong $r$-prime power condition and every nontrivial $r$-ideal of $H$ is $r$-cancellative.
\item[(3)] For all nonzero $x\in H$ and $P\in\mathcal{P}(xH)$, $P$ is $r$-half cancellative and every $r$-ideal of $H$ whose radical is $P$ is an $r$-power of its radical.
\item[(4)] $H$ is $r$-treed and satisfies the strong $r$-prime power condition.
\item[(5)] $H$ satisfies the strong $r$-prime power condition and $r$ is modular.
\item[(6)] $r$-$\max(H)=\mathfrak X(H)$ and $H$ satisfies the $r$-prime power condition.
\item[(7)] $H$ satisfies the $r$-prime power condition and the Principal Ideal Theorem, and $H$ is primary $r$-ideal inclusive.
\end{itemize}
\end{proposition}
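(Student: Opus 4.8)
The plan is to treat condition~(1) as a hub: I would prove $(1)\Rightarrow(i)$ for each $i\in\{2,\dots,7\}$ directly from the structure of $r$-almost Dedekind monoids, and then close the equivalences by showing that each of $(2)$–$(7)$ implies $(6)$ and that $(6)\Rightarrow(1)$, the localization engine behind almost all of this being Proposition~\ref{Prop 3.5}. Two auxiliary facts would be used throughout. First, for \emph{any} finitary ideal system $r$ and any $r$-ideal $I$ one has $I=H\cap\bigcap_{M\in r\textnormal{-}\max(H)}I_M$, because the conductor $(I:x)=\{u\in H\mid ux\in I\}$ is itself an $r$-ideal for every $x\in H$, so if $x$ lies in every $I_M$ then $(I:x)$ is contained in no $r$-maximal $r$-ideal and hence equals $H$. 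Second, if $H\ne G$ is $r$-almost Dedekind then $r$-$\max(H)=\mathfrak X(H)$: a minimal prime $P$ inside an $r$-maximal $r$-ideal $M$ localizes to a nonzero prime of the one-dimensional monoid $H_M$, forcing $P=M$, while every $M\in r$-$\max(H)$ properly contains $z(H)$ (else $H=G$) and so contains some $P\in\mathfrak X(H)$.

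For $(1)\Rightarrow(i)$: assuming $H\ne G$ is $r$-almost Dedekind, each $H_M=H_P$ with $P\in\mathfrak X(H)$ is a DVM, hence radical factorial, so Proposition~\ref{Prop 3.1} applies and $\mathcal P(I)\subseteq\mathfrak X(H)$ for every $r$-invertible $r$-ideal $I$; in particular the Principal Ideal Theorem holds. Combining the two auxiliary facts, an $r$-ideal with prime radical $P$ localizes to $H_{P'}$ for $P'\ne P$ and to a power of $P_P$ at $P$, hence equals the corresponding $r$-power of $P$, which gives the strong (and a fortiori the weak) $r$-prime power condition. That $r$ is modular follows by localizing the modular law to the valuation monoids $H_M$, where $s_M=r_M$ is modular, and reassembling via the first auxiliary fact; $r$-treedness is immediate since the prime $r$-ideals inside a given $M$ correspond to the prime $r_M$-ideals of the DVM $H_M$, which form a chain; primary $r$-ideal inclusiveness reduces, because $r$-$\max(H)=\mathfrak X(H)$, to producing for each chain $z(H)\subsetneq M$ a primary $r$-ideal strictly between, for which $(M^2)_r$ works; and $r$-cancellativity of nontrivial $r$-ideals is checked locally on the DVMs.

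For $(6)\Rightarrow(1)$: given $M\in r$-$\max(H)=\mathfrak X(H)$, the monoid $H_M$ is $r_M$-local of dimension at most one, so $M_M$ is its only nonzero prime $r_M$-ideal; the $r_M$-prime power condition then forces every proper nontrivial $r_M$-ideal to be a power of $M_M$, which rules out $M_M$ being idempotent (otherwise a principal ideal $xH_M\subsetneq M_M$ with radical $M_M$ would fail to be a power of its radical), so $M_M$ is principal and $H_M$ is a DVM; equivalently one verifies that the radical of every $r_M$-finitely generated $r_M$-ideal of $H_M$ is principal and invokes Proposition~\ref{Prop 3.5}. To pass from $(2)$, $(3)$, $(4)$, $(5)$ or $(7)$ to $(6)$, one observes that the weak $r$-prime power condition is present in, or follows from, each hypothesis (using the cited results of \cite{R} for $(3)$), so the real content is to derive $r$-$\max(H)=\mathfrak X(H)$ in each case.

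This last step is where I expect the main obstacle to lie. One must use the extra hypothesis accompanying the prime power condition — $r$-cancellativity in $(2)$, $r$-half cancellativity together with the power-of-radical property in $(3)$, $r$-treedness in $(4)$, modularity of $r$ in $(5)$, the Principal Ideal Theorem together with primary $r$-ideal inclusiveness in $(7)$ — to exclude a genuine height jump $P\subsetneq M$ with $M$ an $r$-maximal $r$-ideal; without such an argument a non-modular finitary ideal system could satisfy the prime power condition while having $r$-maximal $r$-ideals of height greater than one. In other words, the crux is bridging the a priori global, non-modular hypotheses and the local DVM conclusion; the modular-versus-treed interplay implicit in $(4)\Leftrightarrow(5)$ and the bookkeeping around $z(H)$ and the distinction between "nontrivial" and "proper" ideals also need care throughout.
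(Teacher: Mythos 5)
Your architecture (hub at (1), funnel everything through (6), close with Proposition~\ref{Prop 3.5} or the direct ``$M_M$ not idempotent, hence principal'' argument) is sound for the directions you actually argue: the implications $(1)\Rightarrow(i)$ and $(6)\Rightarrow(1)$ as you sketch them are essentially correct and close to what the paper does. But the proposal has a genuine gap, and you name it yourself: you never supply the arguments that $(2)$, $(3)$, $(4)$, $(5)$ and $(7)$ each force $r$-$\max(H)=\mathfrak X(H)$ (equivalently, exclude a height jump $z(H)\subsetneq P\subsetneq M$ with $M\in r$-$\max(H)$). These are not routine bookkeeping; they are the entire content of the proposition, since the prime power condition alone is cheap locally and the whole difficulty is that $r$ need not be modular. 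Saying ``one must use the extra hypothesis to exclude a genuine height jump'' identifies the obstacle but does not overcome it, so as written this is a plan for the easy half plus an accurate description of what is missing.

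For comparison, the paper's proof closes exactly these bridges with specific devices you would need analogues of. A preliminary Claim shows that under the $r$-prime power condition, $P_P$ is principal for every $P\in\mathcal P(xH)$ (localize, note $xH_P$ is $P_P$-primary, write it as $(P_P^k)_{r_P}$, conclude $P_P$ is $r_P$-invertible hence principal). Then: for $(3)\Rightarrow(4)$, assume $P\subsetneq M$, observe $\sqrt{(PM)_r}=P$ so $(PM)_r=(P^k)_r$, and use $r$-half cancellativity of $P$ together with $(P^2)_r\subseteq(PM)_r$ to force $(PM)_r=(P^2)_r$, which localizes to $yH_P=y^2H_P$, a contradiction. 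For $(5)\Rightarrow(6)$, show $((P^2)_r\cup xM)_r=P$ (where $P_P=xH_P$) and then apply the modular law to $(xM\cup(P^2)_r)_r\cap xH$ to land $x\in xM$, a contradiction; this is where modularity does irreplaceable work. For $(4)\Rightarrow(6)$, use the chain of prime $r_M$-ideals plus the strong condition to make $\sqrt[\uproot{3}H_M]{I}$ $r_M$-invertible for principal $I$, then invoke Lemma~\ref{Lem 3.2}(1). For $(7)\Rightarrow(1)$, one needs the notion of $r$-branched primes: $P_P$ is principal for branched $P$ (via \cite[Proposition 5.2(1)]{R}), the Principal Ideal Theorem then puts branched primes in $\mathfrak X(H)$, and primary $r$-ideal inclusiveness manufactures a branched prime strictly above any putative lower prime, giving the contradiction. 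None of these steps, nor substitutes for them, appear in your proposal, so the equivalences beyond $(1)\Leftrightarrow(6)$ remain unproved. (A smaller point: your remark that the prime power condition ``follows from'' hypothesis $(3)$ by citing \cite{R} is also unsubstantiated; the paper avoids needing the full condition there by arguing $(3)\Rightarrow(4)$ directly with the restricted hypothesis.)
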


\begin{proof} \textsc{Claim 1.} If $H$ satisfies the $r$-prime power condition and $P\in\mathcal{P}(xH)$ for some nonzero $x\in H$, then $P_P$ is principal.

Let $x\in H$ be nonzero and $P\in\mathcal{P}(xH)$. Since $P_P$ is the only prime $s$-ideal of $H_P$ such that $x\in P_P$, we infer that $\sqrt[\uproot{3}H_P]{xH_P}=P_P$. Note that $H_P$ satisfies the $r_P$-prime power condition (by the discussion above), and thus $xH_P=(P_P^k)_{r_P}$ for some $k\in\mathbb{N}$. (Note that $P_P\in r_P$-$\max(H_P)$, and thus $xH_P$ is $P_P$-primary.) Therefore, $P_P$ is $r_P$-invertible, and hence $P_P$ is principal, since $H_P$ is $r_P$-local.\qed(Claim 1)

\smallskip
(1) $\Rightarrow$ (2),(5): Clearly, $r$-$\max(H)=\mathfrak X(H)$. Let $I$ be a nontrivial $r$-ideal of $H$ and $J,L$ $r$-ideals of $H$ such that $(IJ)_r=(IL)_r$. If $M\in r$-$\max(H)$, then $I_M=xH_M$ for some nonzero $x\in H_M$ and hence $xJ_M=I_MJ_M=(I_MJ_M)_{r_M}=((IJ)_r)_M=((IL)_r)_M=(I_ML_M)_{r_M}=I_ML_M=xJ_M$. We infer that $J_M=L_M$ for each $M\in r$-$\max(H)$, and thus $J=L$.

Now let $I$ be a nontrivial $r$-ideal of $H$ with prime radical. Set $M=\sqrt{I}$. Observe that $H_M$ is a DVM, and thus every nontrivial $s$-ideal of $H_M$ is a power of $M_M$. Consequently, $I_M=M_M^k=(M_M^k)_{r_M}=((M^k)_r)_M$ for some $k\in\mathbb{N}$. Since $M\in r$-$\max(H)$, both $I$ and $(M^k)_r$ are $M$-primary $r$-ideals of $H$, and hence $I=I_M\cap H=((M^k)_r)_M\cap H=(M^k)_r$. It is clear that $r$ is modular.

(2) $\Rightarrow$ (3): This is obvious.

(3) $\Rightarrow$ (4): It is sufficient to show that every $r$-maximal $r$-ideal of $H$ is of height one. Let $M$ be an $r$-maximal $r$-ideal of $H$. Assume that $M$ is not of height one, then there exist $x\in M\setminus\{0\}$ and $P\in\mathcal{P}(xH)$ such that $P\subsetneq M$. By Claim 1 there is some $y\in P_P$ such that $P_P=yH_P$. We have that $\sqrt{(PM)_r}=\sqrt{P}\cap\sqrt{M}=P$, and thus $(PM)_r=(P^k)_r$ for some $k\in\mathbb{N}$. Since $(P^2)_r\subseteq (PM)_r\subseteq P$ and $P$ is $r$-half cancellative, we infer that $(PM)_r=(P^2)_r$, and thus $yH_P=(P_PM_P)_{r_P}=(P_P^2)_{r_P}=y^2H_P$. This implies that $P_P=yH_P=H_P$, a contradiction.

(4) $\Rightarrow$ (6): It is sufficient to show that every $r$-maximal $r$-ideal of $H$ is of height one. Let $M$ be an $r$-maximal $r$-ideal of $H$. First we show that the radical of every nontrivial principal ideal of $H_M$ is $r_M$-invertible. Let $I$ be a nontrivial proper principal ideal of $H_M$. The prime $r_M$-ideals of $H_M$ form a chain, and hence $\sqrt[\uproot{3}H_M]{I}$ is a prime $r_M$-ideal of $H_M$. Since $H_M$ satisfies the strong $r_M$-prime power condition, we have that $I=((\sqrt[\uproot{3}H_M]{I})^k)_{r_M}$ for some $k\in\mathbb{N}$. This implies that $\sqrt[\uproot{3}H_M]{I}$ is $r_M$-invertible.

We infer that every nontrivial prime $r_M$-ideal of $H_M$ contains an $r_M$-invertible radical $r_M$-ideal of $H_M$. It follows by Lemma~\ref{Lem 3.2}(1) that $H_M$ is a DVM, and hence $M\in\mathfrak{X}(H)$.

(5) $\Rightarrow$ (6): Assume that $r$-$\max(H)\not=\mathfrak X(H)$. Then there exist $y\in H^{\bullet}$, $P\in\mathcal{P}(yH)$ and $M\in r$-$\max(H)$ such that $P\subsetneq M$. By Claim 1 there is some $x\in P$ such that $P_P=xH_P$. Observe that $\sqrt{((P^2)_r\cup xM)_r}=P$, and hence $((P^2)_r\cup xM)_r=(P^k)_r$ for some $k\in\mathbb{N}$. If $k\geq 2$, then $xM\subseteq (P^2)_r$, and thus $xH_P=xM_P\subseteq (P_P^2)_{r_P}=x^2H_P$, a contradiction. Therefore, $((P^2)_r\cup xM)_r=P$. If $z\in H$ is such that $xz\in (P^2)_r$, then $xz\in ((P^2)_r)_P=x^2H_P$, and thus $z\in xH_P\cap H=P$. We infer that $x\in P\cap xH=(xM\cup (P^2)_r)_r\cap xH=(xM\cup ((P^2)_r\cap xH))_r\subseteq (xM\cup xP)_r=xM$, a contradiction.

(6) $\Rightarrow$ (7): It is clear that $H$ satisfies the Principal Ideal Theorem. It follows from \cite[Proposition 3.9]{R} that $H$ is primary $r$-ideal inclusive.

(7) $\Rightarrow$ (1): Recall that a prime $r$-ideal $P$ of $H$ is called $r$-branched if there exists a $P$-primary $r$-ideal $I$ of $H$ with $I\not=P$.

\smallskip
\textsc{Claim 2.} For each $r$-branched prime $r$-ideal $P$ of $H$, we have that $P\in\mathfrak X(H)$ and $H_P$ is a DVM.

\smallskip
Let $P$ be an $r$-branched prime $r$-ideal of $H$. Then $P_P$ is a principal ideal of $H_P$ by \cite[Proposition 5.2(1)]{R}. There is some $x\in H^{\bullet}$ such that $P_P=xH_P$. Observe that $P\in\mathcal{P}(xH)\subseteq\mathfrak X(H)$, and hence $P_P\in\mathfrak X(H_P)$. Therefore, every prime $s$-ideal of $H_P$ is principal, and hence $H_P$ is a DVM.\qed(Claim 2)

\smallskip
Let $M\in r$-$\max(H)$. It is sufficient to show that $M\in\mathfrak X(H)$ (then $M$ is $r$-branched, and hence $H_M$ is a DVM by Claim 2). Assume that $M\not\in\mathfrak X(H)$. Then there is some nontrivial prime $r$-ideal $P$ of $H$ such that $P\subsetneq M$. Since $H$ is primary $r$-ideal inclusive, we can find an $r$-branched prime $r$-ideal $Q$ of $H$ such that $P\subsetneq Q$. By Claim 2 we have that $Q\in\mathfrak X(H)$, a contradiction.
\end{proof}

\begin{lemma}\label{Lem 3.7} Let $r$ be a modular finitary ideal system on $H$. Then $H$ is primary $r$-ideal inclusive.
\end{lemma}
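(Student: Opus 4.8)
The goal is to show that a modular finitary ideal system $r$ forces $H$ to be primary $r$-ideal inclusive: given prime $r$-ideals $P\subsetneq Q$, we must produce a primary $r$-ideal $I$ with $P\subseteq I\subsetneq\sqrt I\subseteq Q$. The natural candidate is $I=(P\cup xQ)_r$ for a suitably chosen $x\in Q\setminus P$ (or, localizing first at $Q$, a suitable element $x$ of $Q_Q$). The point of modularity is exactly that it lets us control intersections of the form $(P\cup xQ)_r\cap yH$, which is what one needs to verify that this $I$ is primary and strictly smaller than its radical.

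\emph{First step: reduce to the local case.} Since the property ``primary $r$-ideal inclusive'' localizes (this was noted in the paragraph preceding Proposition~\ref{Prop 3.6}, where it was observed that $T^{-1}H$ is primary $T^{-1}r$-ideal inclusive whenever $H$ is, and conversely the relevant data can be pulled back), and since $r$ modular implies $r_P$ modular for every prime $s$-ideal $P$, I would pass to $H_Q$ with its modular ideal system $r_Q$; there $Q$ becomes the $r_Q$-maximal ideal and $P$ a nontrivial prime $r_Q$-ideal properly below it. So it suffices to find, in an $r$-local modular setting with maximal $r$-ideal $M$ and a prime $r$-ideal $P\subsetneq M$, a primary $r$-ideal $I$ with $P\subseteq I\subsetneq\sqrt I\subseteq M$.

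\emph{Second step: pick $x$ and set $I=(P\cup xM)_r$.} Choose any $x\in M\setminus P$. Then $\sqrt I\supseteq\sqrt{P}=P$ and $\sqrt I\subseteq\sqrt M=M$, and since $x\in\sqrt I\setminus P$ we get $I\subseteq\sqrt I$ with $\sqrt I\neq P$; but we also need $I\subsetneq\sqrt I$, i.e.\ $I\neq\sqrt I$, and that $\sqrt I$ is prime (so $I$ is genuinely primary), or at least that $I$ is primary in its own right. The cleanest route: show $\sqrt I = \sqrt{(P\cup xH)_r}$ — this is the minimal prime(s) over $x$ together with $P$ — and then argue that $I$ is primary by showing that whenever $ab\in I$ with $a\notin\sqrt I$, some power of $b$ lies in $I$; here one uses that $a\notin\sqrt I$ means $a$ avoids every minimal prime over $I$, localize at such a prime, and run the usual primary-decomposition-free argument. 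To get $I\subsetneq\sqrt I$ strictly, suppose $\sqrt I\subseteq I$; then in particular some minimal prime $N$ over $x$ (with $P\subseteq N\subsetneq M$, which exists by the Principal Ideal Theorem applied after noting $x$ is a nonzero nonunit — wait, we actually want $N$ with $P\subseteq N$) would satisfy $N\subseteq I=(P\cup xM)_r$; intersecting with a carefully chosen principal ideal and invoking modularity ($(P\cup xM)_r\cap yH = (P\cup(xM\cap yH))_r$ when $P\subseteq yH$, roughly) collapses $x$ into $P$ or into $xP$, contradicting $x\notin P$ or $x$ being a nonunit — this mirrors exactly the computation in the $(5)\Rightarrow(6)$ step of Proposition~\ref{Prop 3.6}.

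\emph{Main obstacle.} The delicate part is not producing the candidate $I$ but verifying \emph{simultaneously} that (a) $I$ is primary and (b) $I\subsetneq\sqrt I$ strictly, using only modularity and finitariness — without assuming any Noetherian or height hypotheses. I expect the key lemma to be an identity of the shape: for $r$-ideals with $P\subseteq N$,
\[
(P\cup xM)_r\cap N \;=\;\bigl(P\cup(xM\cap N)\bigr)_r,
\]
which is precisely the modular law, and then a clever choice of $N$ (something like $N=xH$ or $N=\sqrt I$) that forces $x\in(P\cup xP)_r=xP$ (after using $x\notin P$ to cancel via $r_P$-localization), yielding $x\in H_P^\times$, the desired contradiction. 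Getting the bookkeeping of which ideal to intersect against right, and making sure the chosen minimal prime actually contains $P$, will be the fiddly points; everything else is the standard toolkit (localization, $\sqrt{(JL)_r}=\sqrt J\cap\sqrt L$, $r$ finitary).
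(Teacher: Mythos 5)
Your overall strategy (pick $x\in Q\setminus P$, build an auxiliary ideal from $P$ and $x$, and use the modular law to show it is strictly smaller than its radical) is the right one, and your modularity computation for strictness is essentially sound; but there is a genuine gap at the point you yourself flag: the candidate $I=(P\cup xM)_r$ (equivalently $(P\cup xQ)_r$ before localizing) need not be primary, and your sketched verification does not establish it. Having prime radical is not enough, and in fact the candidate can fail even then: take $H=R=k[[u,v,w]]$ with the modular finitary $d$-system, $P=(u)$, $Q=M=(u,v,w)$, $x=v$; then $I=P+xM=(u,v^2,vw)$ has prime radical $(u,v)$ but is not primary, since $vw\in I$ while $v\notin I$ and $w\notin\sqrt{I}$. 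Your proposed repair --- ``localize at a minimal prime over $I$ and run the usual argument'' --- silently replaces $I$ by the contraction of its localization, which \emph{is} primary but is a different ideal; and with your auxiliary ideal that repair destroys strictness: in the same example $(I_{(u,v)})\cap R=(u,v)$ equals the minimal prime, so the contracted ideal coincides with its radical and witnesses nothing.

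The paper's proof resolves exactly this tension by one extra idea you are missing: use $x^2$ rather than $xM$. It chooses $L\in\mathcal{P}((P\cup x^2H)_r)$ with $L\subseteq Q$ and sets $I=((P\cup x^2H)_r)_L\cap H$, which is $L$-primary \emph{by construction} (so primariness costs nothing), with $P\subseteq I$ and $\sqrt{I}=L\subseteq Q$. The only thing left is $I\neq L$, and this is where modularity and the square enter: if $I=L$, then $x\in(P_L\cup x^2H_L)_{r_L}$, and the modular law in $H_L$ (with $x^2H_L\subseteq xH_L$ and $P_L\cap xH_L=xP_L$, using $x\notin P_L$) gives $xH_L=(x^2H_L\cup xP_L)_{r_L}=x(xH_L\cup P_L)_{r_L}\subseteq xL_L$, hence $H_L\subseteq L_L$, a contradiction after cancelling $x$. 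Note that with your $xM$-candidate the analogous contraction can equal $L$, so the factor $x^2$ is not cosmetic: it is what makes the primary contraction strictly smaller than $L$. (Your preliminary reduction to the local case is also not needed and relies on a transfer direction the paper only states the other way, though that part could be repaired.)
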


\begin{proof} Let $P,Q\in r$-${\rm spec}(H)$ be such that $P\subsetneq Q$. There exist $x\in Q\setminus P$ and $L\in\mathcal{P}((P\cup x^2H)_r)$ such that $L\subseteq Q$. Set $I=((P\cup x^2H)_r)_L\cap H$. Observe that $I$ is an $L$-primary $r$-ideal of $H$. It remains to show that $I\not=L$. Assume to the contrary that $I=L$.  Then $x\in ((P\cup x^2H)_r)_L=(P_L\cup x^2H_L)_{r_L}$. Since $r_L$ is a modular finitary ideal system on $H_L$, we obtain that $xH_L=(x^2H_L\cup P_L)_{r_L}\cap xH_L=(x^2H_L\cup (P_L\cap xH_L))_{r_L}=(x^2H_L\cup xP_L)_{r_L}=x(xH_L\cup P_L)_{r_L}\subseteq xL_L$. Therefore, $H_L\subseteq L_L$, a contradiction.
\end{proof}

Let $r$ be a finitary ideal system on $H$. Then $H$ is called an {\it $r$-Pr\"ufer monoid}, resp. an {\it $r$-B\'ezout monoid}, if every nontrivial $r$-finitely generated $r$-ideal of $H$ is $r$-invertible, resp. principal. Note that $H$ is an $r$-B\'ezout monoid if and only if $H$ is an $r$-Pr\"ufer monoid and $\mathcal{C}_r(H)$ is trivial. Note that $H$ is an $r$-Pr\"ufer monoid if and only if $H_M$ is a valuation monoid for all $M\in r$-$\max(H)$. In particular, if $H$ is an $r$-Pr\"ufer monoid, then $H$ is $r$-treed and $r$ is modular. Moreover, $H$ is an $s$-Pr\"ufer monoid if and only if $H$ is a valuation monoid.

\begin{corollary}\label{Cor 3.8} $[$cf. \cite{BP,G}$]$ Let $H\not=G$ and let $p$ and $r$ be finitary ideal systems on $H$ such that $p$ is modular and $p\leq r$. The following are equivalent:
\begin{itemize}
\item[(1)] $H$ is an $r$-almost Dedekind monoid.
\item[(2)] $H$ is an $\widetilde{r_p}$-almost Dedekind monoid.
\item[(3)] $\widetilde{r_p}$-$\max(H)=\mathfrak X(H)$ and $H$ satisfies the $\widetilde{r_p}$-prime power condition.
\item[(4)] $H$ satisfies the strong $\widetilde{r_p}$-prime power condition.
\item[(5)] $H$ satisfies the $\widetilde{r_p}$-prime power condition and the Principal Ideal Theorem.
\end{itemize}
If these equivalent conditions are satisfied, then $\widetilde{r_p}=r=t$.
\end{corollary}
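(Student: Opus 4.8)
The plan is to reduce the five equivalences to Proposition~\ref{Prop 3.6} applied to the finitary ideal system $\widetilde{r_p}$, and to settle the concluding equality by a Pr\"ufer--divisorial argument. I would first dispose of (1) $\Leftrightarrow$ (2): by Lemma~\ref{Lem 2.1}(2) one has $r$-$\max(H)=\widetilde{r_p}$-$\max(H)$, and since $H\not=G$ the statements ``$H$ is $r$-almost Dedekind'' and ``$H$ is $\widetilde{r_p}$-almost Dedekind'' each assert exactly that $H_M$ is a DVM for every $M$ in this common set of maximal ideals; so they coincide.

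For (2) $\Leftrightarrow$ (3) $\Leftrightarrow$ (4) $\Leftrightarrow$ (5) I would apply Proposition~\ref{Prop 3.6} to the ideal system $\widetilde{r_p}$, which is a modular finitary ideal system by Lemma~\ref{Lem 2.1}(1),(4). Conditions (1) and (6) of Proposition~\ref{Prop 3.6}, with $r$ replaced by $\widetilde{r_p}$, are our (2) and (3) verbatim. Its condition (5), ``$H$ satisfies the strong $\widetilde{r_p}$-prime power condition and $\widetilde{r_p}$ is modular'', reduces, as $\widetilde{r_p}$ is modular, to our (4). Its condition (7), ``$H$ satisfies the $\widetilde{r_p}$-prime power condition and the Principal Ideal Theorem, and $H$ is primary $\widetilde{r_p}$-ideal inclusive'', reduces similarly to our (5), since by Lemma~\ref{Lem 3.7} the modularity of $\widetilde{r_p}$ makes $H$ primary $\widetilde{r_p}$-ideal inclusive automatically. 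Hence (2)--(5) are all equivalent.

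Finally, assuming these conditions hold, it suffices to prove $r=t$ and $\widetilde{r_p}=t$. By the equivalence (1) $\Leftrightarrow$ (2), $H$ is both $r$-almost Dedekind and $\widetilde{r_p}$-almost Dedekind, hence both $r$-Pr\"ufer and $\widetilde{r_p}$-Pr\"ufer, since each $H_M$ is a DVM and therefore a valuation monoid. Consequently, for every finite $E\subseteq H$ with $E\not\subseteq z(H)$, the $r$-ideal $E_r$ (resp.\ the $\widetilde{r_p}$-ideal $E_{\widetilde{r_p}}$) is nontrivial and $r$-finitely generated (resp.\ $\widetilde{r_p}$-finitely generated), hence invertible for the respective system, hence divisorial; since $(E_r)^{-1}=E^{-1}=(E_{\widetilde{r_p}})^{-1}$ and $E_v=E_t$ for finite $E$, this yields $E_r=(E_r)_v=E_v=E_t$ and likewise $E_{\widetilde{r_p}}=E_t$, the case $E\subseteq z(H)$ being trivial. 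As $r$, $\widetilde{r_p}$ and $t$ are finitary and now agree on all finite subsets of $H$, they agree on every subset, so $\widetilde{r_p}=r=t$. The one nonroutine point is this last step: the key observation is that finitely generated ideals of a Pr\"ufer-type monoid are divisorial, which pins $r$ and $\widetilde{r_p}$ down to $t$ on finite sets and hence everywhere; everything else is bookkeeping with Lemma~\ref{Lem 2.1}, Lemma~\ref{Lem 3.7} and Proposition~\ref{Prop 3.6}.
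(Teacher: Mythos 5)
Your proposal is correct and follows essentially the same route as the paper: (1) $\Leftrightarrow$ (2) via Lemma~\ref{Lem 2.1}(2), the equivalences (2)--(5) by applying Proposition~\ref{Prop 3.6} to the modular finitary system $\widetilde{r_p}$ together with Lemmas~\ref{Lem 2.1}(4) and~\ref{Lem 3.7}, and the final equality via the Pr\"ufer property making finitely generated ideals invertible, hence divisorial, hence $t$-ideals, with finitariness extending this to all subsets. The only cosmetic difference is that the paper exploits the sandwich $\widetilde{r_p}\leq r\leq t$ and only checks that $\widetilde{r_p}$-ideals are $t$-ideals, whereas you verify $E_r=E_t$ and $E_{\widetilde{r_p}}=E_t$ separately on finite sets --- a harmless redundancy.
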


\begin{proof} (1) $\Leftrightarrow$ (2): This is an immediate consequence of Lemma~\ref{Lem 2.1}(2).

(2) $\Leftrightarrow$ (3) $\Leftrightarrow$ (4) $\Leftrightarrow$ (5): This follows from Proposition~\ref{Prop 3.6} and Lemmas~\ref{Lem 2.1}(4) and~\ref{Lem 3.7}.

Now let the equivalent conditions be satisfied. Since $\widetilde{r_p}\leq r\leq t$, it is sufficient to show that every $\widetilde{r_p}$-ideal of $H$ is a $t$-ideal of $H$. Let $I\in\mathcal{I}_{\widetilde{r_p}}(H)$. Observe that $H$ is an $\widetilde{r_p}$-Pr\"ufer monoid, and hence every $\widetilde{r_p}$-finitely generated $\widetilde{r_p}$-ideal of $H$ is a $t$-ideal of $H$. Since $\widetilde{r_p}$ is finitary, we infer that $I$ is a directed union of $t$-ideals of $H$, and hence $I$ is a $t$-ideal of $H$.
\end{proof}

\begin{theorem}\label{Thm 3.9} Let $r$ be a finitary ideal system on $H$. The following are equivalent:
\begin{itemize}
\item[(1)] $H$ is an $r$-almost Dedekind $r$-SP-monoid.
\item[(2)] $H$ is $r$-treed and every nontrivial prime $r$-ideal of $H$ contains an $r$-invertible radical $r$-ideal of $H$.
\item[(3)] $H$ satisfies the $r$-prime power condition, $H$ is primary $r$-ideal inclusive and each nontrivial prime $r$-ideal of $H$ contains an $r$-invertible radical $r$-ideal.
\item[(4)] The radical of every nontrivial $r$-finitely generated $r$-ideal of $H$ is $r$-invertible.
\item[(5)] $\mathcal{P}(I)\subseteq\mathfrak X(H)$ for every nontrivial $r$-finitely generated $r$-ideal $I$ of $H$ and the radical of every nontrivial principal ideal of $H$ is $r$-invertible.
\end{itemize}
\end{theorem}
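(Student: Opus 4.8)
The plan is to run the cycle $(1)\Rightarrow(4)\Rightarrow(5)\Rightarrow(2)\Rightarrow(3)\Rightarrow(1)$, disposing first of the trivial case $H=G$ (where the only $r$-ideals are $z(H)$ and $H$), so that one may assume $H\not=G$. The implication $(2)\Rightarrow(3)$ is essentially free: by Lemma~\ref{Lem 3.2}(2) condition $(2)$ already makes $H$ an $r$-almost Dedekind monoid, whence Proposition~\ref{Prop 3.6} ($(1)\Rightarrow(7)$) yields the $r$-prime power condition and primary $r$-ideal inclusiveness, and the third clause of $(3)$ is the second clause of $(2)$. For $(1)\Rightarrow(4)$, let $I$ be a nontrivial $r$-finitely generated $r$-ideal: since each $H_M$ ($M\in r$-$\max(H)$) is a DVM, hence a valuation monoid, $H$ is $r$-Pr\"ufer and $I$ is $r$-invertible, and the $r$-SP hypothesis lets one write $I=(J_1\cdots J_n)_r$ with the $J_i$ radical $r$-ideals; a cofactor computation shows each $J_i$ is $r$-invertible, and $\sqrt I=J_1\cap\cdots\cap J_n$. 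In an $r$-Pr\"ufer monoid a finite intersection of $r$-invertible $r$-ideals is $r$-invertible — for $r$-invertible $A,B$, the $r$-ideal $(A\cup B)_r$ is $r$-finitely generated and locally principal, hence $r$-invertible, and the Pr\"ufer identity $((A\cap B)(A\cup B)_r)_r=(AB)_r$ (checked by localizing at the valuation monoids $H_M$) exhibits $A\cap B$ as $r$-invertible — so $\sqrt I$ is $r$-invertible. Finally $(4)\Rightarrow(5)$ is immediate: $(4)$ applied to principal ideals makes the radical of each nontrivial principal ideal $r$-invertible, so every $H_M$ is radical factorial, and Proposition~\ref{Prop 3.1} then gives $\mathcal{P}(J)\subseteq\mathfrak{X}(H)$ for every $r$-invertible $r$-ideal $J$; taking $J=\sqrt I$ gives $\mathcal{P}(I)=\mathcal{P}(\sqrt I)\subseteq\mathfrak{X}(H)$.

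For $(5)\Rightarrow(2)$, the first clause of $(5)$ again makes every $H_M$ radical factorial, so Proposition~\ref{Prop 3.1} applies to $H$ and, since $H_M$ is $r_M$-local, also to each $H_M$, writing $H_M=\bigcap_{Q\in\mathfrak{X}(H_M)}(H_M)_Q$ with each $(H_M)_Q$ a DVM. I would show each $H_M$ is a DVM by excluding $|\mathfrak{X}(H_M)|\geq 2$: given distinct minimal primes $Q_1,Q_2$ of $H_M$ (both strictly below the $r_M$-maximal ideal), a prime-avoidance argument — using that in a radical factorial monoid each nonzero element lies in only finitely many minimal primes \cite{R} — produces nonzero $z_1\in Q_1$ and $z_2\in Q_2$ sharing no minimal prime of $H_M$, so that no prime minimal over $(z_1H_M\cup z_2H_M)_{r_M}$ is a minimal prime of $H_M$; this contradicts the second clause of $(5)$, which passes to $H_M$ because $\mathfrak{X}(H_M)=\{P_M\mid P\in\mathfrak{X}(H),\ P\subseteq M\}$ and every $r_M$-finitely generated $r_M$-ideal of $H_M$ is a unit multiple of the localization of an $r$-finitely generated $r$-ideal of $H$. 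Hence each $H_M$ is a DVM, so $H$ is $r$-almost Dedekind (in particular $r$-treed) with $\mathfrak{X}(H)=r$-$\max(H)$, and every nontrivial prime $r$-ideal $P$ is $r$-maximal and contains the $r$-invertible radical $r$-ideal $\sqrt{xH}$ for any nonzero $x\in P$, which is the second clause of $(2)$.

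The remaining implication $(3)\Rightarrow(1)$ is the hardest. By Lemma~\ref{Lem 3.2}(2) it suffices to prove $H$ is $r$-treed and $r$-almost Dedekind, and since the $r$-prime power condition, primary $r$-ideal inclusiveness and the $r$-invertible-radical clause all survive localization, it is enough to show each $V=H_M$ ($M\in r$-$\max(H)$), with $r$-maximal ideal $M_V$, is a DVM. First, $M_V$ is a minimal prime of $V$: if not, primary $r_V$-ideal inclusiveness applied to a minimal prime $P\subsetneq M_V$ gives a primary $r_V$-ideal $N$ with $P\subseteq N\subsetneq\sqrt N\subseteq M_V$, so $\sqrt N$ is an $r_V$-branched prime; by \cite[Proposition 5.2(1)]{R} the maximal ideal of $V_{\sqrt N}$ is principal, say $yV_{\sqrt N}$, while $P\subsetneq\sqrt N$ forces $V_{\sqrt N}$ to have a nontrivial prime strictly inside $yV_{\sqrt N}$, and combining this with the $r_V$-invertible radical $r_V$-ideal contained in $\sqrt N$ (which localizes to a principal radical ideal $eV_{\sqrt N}\subseteq yV_{\sqrt N}$) and cancelling makes $y$ a unit — a contradiction. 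So $M_V$ is a minimal prime; then the $r_V$-invertible radical clause forces $M_V$ to be principal (a radical principal $r_V$-ideal contained in the minimal prime $M_V$ must equal $M_V$), and the $r_V$-prime power condition forces $\bigcap_n M_V^n=z(V)$ (otherwise this intersection would be an $M_V$-primary $r_V$-ideal, hence a power of the principal $M_V$, impossible since it lies in every power), so every $s$-ideal of $V$ is a power of $M_V$ and $V$ is a DVM. Then $H$ is $r$-almost Dedekind and $r$-treed, the third clause of $(3)$ supplies the $r$-invertible radical $r$-ideal in every nontrivial prime $r$-ideal, and Lemma~\ref{Lem 3.2}(2) yields $(1)$.

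I expect the two genuinely delicate points to be the cancellation argument in $(3)\Rightarrow(1)$ ruling out an $r$-branched prime that properly contains a nontrivial prime (together with the bookkeeping of which hypotheses localize), and the prime-avoidance step inside $(5)\Rightarrow(2)$; everything else is short given Propositions~\ref{Prop 3.1} and~\ref{Prop 3.6} and Lemma~\ref{Lem 3.2}(2).
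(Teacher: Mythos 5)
Your cycle is sound through $(1)\Rightarrow(4)\Rightarrow(5)$ and $(2)\Rightarrow(3)$ (and in $(1)\Rightarrow(4)$ your Pr\"ufer-identity argument, checked by localizing at the DVMs $H_M$ and using $I=\bigcap_{M}I_M$, is a legitimate self-contained replacement for the paper's appeal to \cite[Corollary 3.4]{R}; it parallels Proposition~\ref{Prop 4.1}). The step that genuinely fails is $(5)\Rightarrow(2)$. The auxiliary fact you import, that in a radical factorial monoid each nonzero element lies in only finitely many minimal primes, is false, and a counterexample sits in this very paper: the $t$-local monoid $H=\{(x_j)_{j\in\mathbb{N}_0}\in\mathbb{Z}^{(\mathbb{N}_0)}\mid x_0\geq x_i\geq 0 \text{ for all } i\}$ of Section 6 (i.e.\ \cite[Example 4.2]{RR}) is a $t$-local $t$-SP-monoid, hence radical factorial, yet $e_0=(1,0,0,\dots)$ lies in the infinitely many height-one primes $Q_i=\{x\in H\mid x_0>x_i\}$, $i\geq 1$ (each $H_{Q_i}=\{g\mid g_0\geq g_i\}$ is a DVM); moreover any two nonzero elements of $H$ lie together in $Q_i$ for all $i$ outside their supports, and this monoid even satisfies the first clause of (5), as recorded in the remark following Corollary~\ref{Cor 3.11}. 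So your plan of excluding $|\mathfrak X(H_M)|\geq 2$ by producing $z_1\in Q_1$, $z_2\in Q_2$ with no common minimal prime cannot be run on radical factoriality plus prime avoidance: radical factoriality yields no finiteness, and prime avoidance is in any case unavailable for monoid ideals (no addition). What the second clause of (5) actually buys (note also that you have the two clauses swapped: it is the invertibility of radicals of principal ideals, not the height condition, that makes each $H_M$ radical factorial) is exactly the content of Proposition~\ref{Prop 3.4}(1) applied in $H_M$ — the minimal primes over an $r_M$-finitely generated ideal coincide with those over a single element, so radicals of such ideals are principal — followed by the infinite-descent argument of Proposition~\ref{Prop 3.5} via Lemma~\ref{Lem 3.3}, which is what forces $H_M$ to be a valuation monoid. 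Your route skips precisely this work, and it cannot be patched by the finiteness-plus-avoidance device.

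A secondary, repairable, flaw is in $(3)\Rightarrow(1)$: from ``$V_{\sqrt N}$ has a nontrivial prime strictly inside its principal maximal ideal $yV_{\sqrt N}$'' together with ``some radical principal ideal $eV_{\sqrt N}\subseteq yV_{\sqrt N}$'' one cannot conclude that $y$ is a unit; a rank-two discrete valuation monoid (value group $\mathbb{Z}\times\mathbb{Z}$ ordered lexicographically) has a principal maximal ideal which is itself a radical principal ideal, and a nontrivial prime strictly below it. The cancellation works only if the radical principal ideal sits inside the \emph{smaller} prime: apply the third clause of (3) to $P$ rather than to $\sqrt N$, getting a nontrivial radical principal $e_2V_{\sqrt N}\subseteq P_{\sqrt N}\subseteq\bigcap_{n}y^nV_{\sqrt N}$; then $e_2=y^2f$, $(yf)^2=e_2f\in e_2V_{\sqrt N}$, radicality gives $yf\in e_2V_{\sqrt N}$, and cancelling forces $y\in V_{\sqrt N}^{\times}$, the desired contradiction. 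With that correction your step (ii) (that $M_V$ is principal and $\bigcap_n M_V^n=z(V)$, hence $V$ is a DVM) is fine, and your $(3)\Rightarrow(1)$ becomes a workable variant of the paper's route through Claim 2 of Proposition~\ref{Prop 3.6} and \cite[Lemma 2.3(2)]{R}. The implication that truly needs a new argument is $(5)\Rightarrow(2)$.
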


\begin{proof} Without restriction let $H\not=G$. (1) $\Rightarrow$ (2), (3): This follows from \cite[Corollary 3.4 and Propositions 3.9 and 3.10(1)]{R}.

(2) $\Rightarrow$ (1): This follows from Lemma~\ref{Lem 3.2}(2).

(3) $\Rightarrow$ (1): First we show that $H$ satisfies the Principal Ideal Theorem. Let $x\in H^{\bullet}$ and $P\in\mathcal{P}(xH)$. It follows by Claim 1 in the proof of Proposition~\ref{Prop 3.6} that $P_P$ is principal. Observe that every nontrivial prime $r_P$-ideal of $H_P$ contains a nontrivial radical principal ideal of $H_P$, and thus $P_P\in\mathfrak{X}(H_P)$ by \cite[Lemma 2.3(2)]{R} (since $P_P$ is principal and thus minimal above a nontrivial radical principal ideal of $H_P$). Therefore, $P\in\mathfrak{X}(H)$.

Consequently, $H$ is an $r$-almost Dedekind monoid by Proposition~\ref{Prop 3.6}. By \cite[Corollary 3.4]{R} we have that $H$ is an $r$-SP-monoid.

(1) $\Rightarrow$ (4): This follows from \cite[Corollary 3.4]{R}.

(4) $\Rightarrow$ (5): Let $I$ be a nontrivial $r$-finitely generated $r$-ideal of $H$. We infer by Proposition~\ref{Prop 3.1} that $\mathcal{P}(I)=\mathcal{P}(\sqrt{I})\subseteq\mathfrak X(H)$.

(5) $\Rightarrow$ (1): Let $M\in r$-$\max(H)$. Then $H_M$ is $r_M$-local, $r_M$ is a finitary ideal system on $H_M$ and the radical of every principal of $H_M$ is principal. Let $I$ be a nontrivial $r_M$-finitely generated $r_M$-ideal of $H_M$. Then $\mathcal{P}(I)\subseteq\mathfrak X(H_M)$. (Note that there is some nontrivial $r$-finitely generated $r$-ideal $J$ of $H$ such that $I=J_M$. Moreover, the contraction of every element of $\mathcal{P}(I)$ to $H$ is an element of $\mathcal{P}(J)$.) By Proposition~\ref{Prop 3.4}(1), there is some nonzero $z\in H_M$ such that $\mathcal{P}(I)=\{P\in\mathfrak X(H_M)\mid I\subseteq P\}=\{P\in\mathfrak X(H_M)\mid z\in P\}=\mathcal{P}(zH_M)$. This implies that $\sqrt[\uproot{3}H_M]{I}=\sqrt[\uproot{3}H_M]{zH_M}$ is principal. It follows by Proposition~\ref{Prop 3.5} that $H_M$ is a DVM. We infer that $H$ is an $r$-almost Dedekind monoid. It follows by \cite[Corollary 3.4]{R} that $H$ is an $r$-SP-monoid.
\end{proof}

\begin{theorem}\label{Thm 3.10} Let $r$ be a finitary ideal system on $H$. The following are equivalent:
\begin{itemize}
\item[(1)] $H$ is an $r$-B\'ezout $r$-SP-monoid.
\item[(2)] $H$ is a radical factorial $r$-B\'ezout monoid.
\item[(3)] $H$ is $r$-treed, $\mathcal{C}_r(H)$ is trivial and every nontrivial prime $r$-ideal of $H$ contains a nontrivial radical principal ideal of $H$.
\item[(4)] $H$ satisfies the $r$-prime power condition, $H$ is primary $r$-ideal inclusive and the radical of every principal ideal of $H$ is principal.
\item[(5)] $H$ is $r$-treed and the radical of every principal ideal of $H$ is principal.
\item[(6)] $\mathcal{P}(I)\subseteq\mathfrak X(H)$ for every nontrivial $r$-finitely generated $r$-ideal $I$ of $H$ and the radical of every principal ideal of $H$ is principal.
\item[(7)] The radical of every $r$-finitely generated $r$-ideal of $H$ is principal.
\end{itemize}
\end{theorem}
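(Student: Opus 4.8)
The plan is to mimic the proof of Theorem~\ref{Thm 3.9}, exploiting the one extra hypothesis that everywhere upgrades ``$r$-invertible'' to ``principal''. Two facts will be used repeatedly. First, every nontrivial radical principal ideal of $H$ is an $r$-invertible $r$-ideal, so the condition ``every nontrivial prime $r$-ideal contains a nontrivial radical principal ideal'' is a special case of ``every nontrivial prime $r$-ideal contains an $r$-invertible radical $r$-ideal''. Second, if the radical of every principal ideal of $H$ is principal, then $\mathcal{C}_r(H)$ is trivial by Proposition~\ref{Prop 3.4}(2), and conversely, when $\mathcal{C}_r(H)$ is trivial every $r$-invertible $r$-ideal is principal. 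Coupling this with the remarks that an $r$-almost Dedekind monoid is $r$-Pr\"ufer (localize at $r$-maximal $r$-ideals) and that an $r$-Pr\"ufer monoid with trivial $r$-class group is $r$-B\'ezout, the whole statement reduces, via Theorem~\ref{Thm 3.9}, to the slogan ``(1) $\Leftrightarrow$ ($H$ is an $r$-almost Dedekind $r$-SP-monoid and $\mathcal{C}_r(H)$ is trivial)''. As in Theorem~\ref{Thm 3.9}, I would assume $H\ne G$ from the outset.

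For the implications I would run a short cycle and attach (4) on the side. For $(1)\Rightarrow(2)$: an $r$-B\'ezout $r$-SP-monoid is $r$-Pr\"ufer, hence $r$-treed, so by Theorem~\ref{Thm 3.9} it is $r$-almost Dedekind and the radical of each nontrivial principal ideal is $r$-invertible, hence principal; writing a principal ideal as an $r$-product of radical $r$-ideals, each factor divides an $r$-invertible $r$-ideal and so is $r$-invertible, hence principal with radical generator, giving radical factoriality. For $(2)\Rightarrow(3)$: $r$-B\'ezout supplies $r$-treed and triviality of $\mathcal{C}_r(H)$, and in a radical factorial monoid any nontrivial prime $r$-ideal $P$ contains a radical factor of $xH$ for any nonzero $x\in P$. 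For $(3)\Rightarrow(1)$ (and likewise $(4)\Rightarrow(1)$ and $(5)\Rightarrow(1)$): the hypotheses give $r$-treed together with ``every nontrivial prime $r$-ideal contains an $r$-invertible radical $r$-ideal'' (in (4) and (5) this is read off from ``radical of every principal ideal principal''), so Lemma~\ref{Lem 3.2}(2) yields an $r$-almost Dedekind $r$-SP-monoid, which is $r$-Pr\"ufer; triviality of $\mathcal{C}_r(H)$ — assumed in (3), and coming from Proposition~\ref{Prop 3.4}(2) in (4) and (5) — then makes it $r$-B\'ezout. For $(4)$ one also checks the $r$-prime power condition and primary $r$-ideal inclusiveness hold in case (1) by Theorem~\ref{Thm 3.9}(3) together with Lemma~\ref{Lem 3.7} (an $r$-almost Dedekind monoid has $r$ modular). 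Finally $(1)\Rightarrow(7)\Rightarrow(6)$ is immediate from Theorem~\ref{Thm 3.9} and the trivial class group, while $(6)\Rightarrow(7)$ uses Proposition~\ref{Prop 3.4}(1): given a nontrivial $r$-finitely generated $r$-ideal $I$, choose $z$ with the same height-one primes; since $\mathcal{P}(I)\subseteq\mathfrak X(H)$ the minimal primes over $I$ are exactly the height-one primes containing $I$, so $\mathcal{P}(I)=\mathcal{P}(zH)$ and $\sqrt{I}=\sqrt{zH}$ is principal. Then $(7)\Rightarrow(1)$ because ``$\sqrt{I}$ principal (hence $r$-invertible) for every nontrivial $r$-finitely generated $I$'' is Theorem~\ref{Thm 3.9}(4), and Proposition~\ref{Prop 3.4}(2) again supplies the trivial class group.

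The main obstacle I anticipate is bookkeeping, not depth: one must take care that each radical $r$-ideal appearing in an $r$-SP-factorization of a principal ideal is genuinely $r$-invertible (so that the triviality of $\mathcal{C}_r(H)$ can be invoked to make it principal), and, dually, that the passage in $(6)\Rightarrow(7)$ from Proposition~\ref{Prop 3.4}(1)'s conclusion about minimal primes lying in $\mathfrak X(H)$ to the conclusion about $\sqrt{I}$ itself genuinely uses $\mathcal{P}(I)\subseteq\mathfrak X(H)$ to identify $\mathcal{P}(I)$ with $\{P\in\mathfrak X(H)\mid I\subseteq P\}$. None of the seven items is individually difficult, but the class-group arithmetic is where gaps could creep in.
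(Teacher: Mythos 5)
Your proposal is correct in substance and follows essentially the same route as the paper: everything is funneled through Theorem~\ref{Thm 3.9}, Proposition~\ref{Prop 3.4}(1),(2), Lemma~\ref{Lem 3.2}(2) and Lemma~\ref{Lem 3.7}, together with the observations that $\sqrt{xH}\subseteq P$ supplies the required ($r$-invertible, resp.\ principal) radical ideal inside a prime and that ``$r$-almost Dedekind $+$ trivial $\mathcal{C}_r(H)$'' gives $r$-B\'ezout; the paper merely arranges the chain as $(1)\Rightarrow(4)\Rightarrow(5)\Rightarrow(6)\Rightarrow(7)\Rightarrow(1)$ while you star-shape the implications around $(1)$, and your direct argument for $(1)\Rightarrow(2)$ (factors of an $r$-invertible $r$-product are $r$-invertible, hence principal) replaces the paper's citation of \cite[Proposition 3.10(2)]{R}.

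Two bookkeeping points should be fixed. First, in lumping $(4)\Rightarrow(1)$ with $(3)\Rightarrow(1)$ and $(5)\Rightarrow(1)$ you assert that the hypotheses of $(4)$ ``give $r$-treed'', but the $r$-prime power condition, primary $r$-ideal inclusiveness and principal radicals do not yield treedness directly; instead note that $(4)$ is literally condition (3) of Theorem~\ref{Thm 3.9} (since for nonzero $x\in P$ the ideal $\sqrt{xH}$ is a nontrivial radical principal, hence $r$-invertible radical, $r$-ideal contained in $P$), so Theorem~\ref{Thm 3.9} rather than Lemma~\ref{Lem 3.2}(2) is the tool there, treedness coming only afterwards. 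Second, as written your implication graph has no arrow \emph{into} $(5)$: you prove $(5)\Rightarrow(1)$ but never record $(1)\Rightarrow(5)$. This is immediate from facts you already derive ($r$-B\'ezout $\Rightarrow$ $r$-Pr\"ufer $\Rightarrow$ $r$-treed, and Theorem~\ref{Thm 3.9}(4) plus triviality of $\mathcal{C}_r(H)$ makes the radical of every principal ideal principal), but it needs to be stated for the seven conditions to be genuinely equivalent.
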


\begin{proof} (1) $\Rightarrow$ (2): Clearly, $\mathcal{C}_r(H)$ is trivial, and thus $H$ is radical factorial by \cite[Proposition 3.10(2)]{R}.

(2) $\Rightarrow$ (3): Since $H$ is an $r$-B\'ezout monoid, it is clear that $H$ is $r$-treed and $\mathcal{C}_r(H)$ is trivial. Since $H$ is radical factorial, every nontrivial prime $r$-ideal of $H$ contains a nontrivial radical principal ideal of $H$.

(3) $\Rightarrow$ (1): This is an immediate consequence of Theorem~\ref{Thm 3.9}, since every $r$-almost Dedekind monoid with trivial $r$-class group is an $r$-B\'ezout monoid.

(1) $\Rightarrow$ (4): It follows from Theorem~\ref{Thm 3.9} that $H$ satisfies the $r$-prime power condition, that $H$ is primary $r$-ideal inclusive and that the radical of every nontrivial principal ideal of $H$ is $r$-invertible. Since $H$ is an $r$-B\'ezout monoid, we infer that the radical of every principal ideal of $H$ is principal.

(4) $\Rightarrow$ (5): This is an immediate consequence of Theorem~\ref{Thm 3.9}.

(5) $\Rightarrow$ (6): Without restriction let $H\not=G$. It follows by Lemma~\ref{Lem 3.2}(2) and \cite[Proposition 2.10]{R} that $H$ is an $r$-almost Dedekind $r$-SP-monoid, and hence $r$-$\max(H)=\mathfrak X(H)$. Obviously, $\mathcal{P}(I)\subseteq\mathfrak X(H)$ for every nontrivial $r$-finitely generated $r$-ideal $I$ of $H$.

(6) $\Rightarrow$ (7): Let $I$ be a nontrivial $r$-finitely generated $r$-ideal of $H$. By Proposition~\ref{Prop 3.4}(1), we have that $\mathcal{P}(I)=\{P\in\mathfrak X(H)\mid I\subseteq P\}=\{P\in\mathfrak X(H)\mid z\in P\}=\mathcal{P}(zH)$ for some nonzero $z\in H$. Consequently, $\sqrt{I}=\sqrt{zH}$ is principal.

(7) $\Rightarrow$ (1): By Theorem~\ref{Thm 3.9}, $H$ is an $r$-almost Dedekind $r$-SP-monoid. We infer by Proposition~\ref{Prop 3.4}(2) that $H$ is an $r$-B\'ezout monoid.
\end{proof}

Next we rediscover several well-known characterizations for (B\'ezout) SP-domains and we also present some new characterizations.

\begin{corollary}\label{Cor 3.11} $[$cf. \cite[Lemma 4.2 and Theorem 4.3]{HOR} and \cite[Corollary 7.7]{OR}$]$ Let $R$ be an integral domain.
\begin{itemize}
\item[(A)] The following are equivalent:
\begin{itemize}
\item[(1)] $R$ is an SP-domain.
\item[(2)] $R$ is treed and every nonzero prime ideal of $R$ contains an invertible radical ideal of $R$.
\item[(3)] Every primary ideal of $R$ is a power of its radical and every nonzero prime ideal of $R$ contains an invertible radical ideal of $R$.
\item[(4)] Every minimal prime ideal of each nonzero finitely generated ideal of $R$ is of height one and the radical of every nonzero principal ideal of $R$ is invertible.
\item[(5)] The radical of every nonzero finitely generated ideal of $R$ is invertible.
\end{itemize}
\item[(B)] The following are equivalent:
\begin{itemize}
\item[(1)] $R$ is a B\'ezout SP-domain.
\item[(2)] $R$ is a radical factorial B\'ezout domain.
\item[(3)] $R$ is treed and the radical of every principal ideal of $R$ is principal.
\item[(4)] Every primary ideal of $R$ is a power of its radical and the radical of every principal ideal of $R$ is principal.
\item[(5)] Every minimal prime ideal of each nonzero finitely generated ideal of $R$ is of height one and the radical of every principal ideal of $R$ is principal.
\item[(6)] The radical of every finitely generated ideal of $R$ is principal.
\end{itemize}
\end{itemize}
\end{corollary}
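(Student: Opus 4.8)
The plan is to obtain both parts by specializing Theorems~\ref{Thm 3.9} and~\ref{Thm 3.10} to $r=d$, the $d$-system of $R$, which by our standing remarks is a modular finitary ideal system. The first task is to record the standard dictionary translating $d$-system language into ring language: the $d$-ideals of $R$ are exactly the ideals of $R$, the $d$-product of ideals is the ordinary product, the $d$-invertible $d$-ideals are the invertible ideals, the $d$-finitely generated $d$-ideals are the finitely generated ideals, $\mathfrak X(R)$ is the set of height-one primes of $R$, and $\mathcal P(I)$ is the set of minimal primes over $I$. Consequently ``$d$-SP-monoid'' reads ``SP-domain'', ``$d$-B\'ezout monoid'' reads ``B\'ezout domain'', ``$d$-treed'' reads ``treed'', ``$\mathcal C_d(R)$ is trivial'' reads ``every invertible ideal of $R$ is principal'', the ``$d$-prime power condition'' reads ``every primary ideal of $R$ is a power of its radical'', ``radical factorial'' keeps its meaning, and ``$\mathcal P(I)\subseteq\mathfrak X(R)$ for every nontrivial $d$-finitely generated $d$-ideal $I$'' reads ``every minimal prime ideal of each nonzero finitely generated ideal of $R$ is of height one''. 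The one extra ingredient is that, $d$ being modular, Lemma~\ref{Lem 3.7} shows $R$ is automatically primary $d$-ideal inclusive; this is why that hypothesis is absent from conditions (A)(3) and (B)(4).

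Granting this dictionary, part~(B) is immediate: conditions (B)(1)--(B)(6) are, in order, the $d$-translations of conditions (1), (2), (5), (4), (6), (7) of Theorem~\ref{Thm 3.10}, with the (automatically satisfied) hypothesis ``primary $d$-ideal inclusive'' suppressed in (4); hence their equivalence is a special case of that theorem. For part~(A), conditions (A)(2)--(A)(5) are, verbatim, the $d$-translations of conditions (2), (3), (5), (4) of Theorem~\ref{Thm 3.9} (again with ``primary $d$-ideal inclusive'' suppressed in (3)), so these four are equivalent to condition~(1) of Theorem~\ref{Thm 3.9}, namely ``$R$ is a $d$-almost Dedekind $d$-SP-monoid''. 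It therefore remains only to check that ``$R$ is an SP-domain'' is equivalent to ``$R$ is a $d$-almost Dedekind $d$-SP-monoid''. One implication is trivial. For the other, an $R$ that is a $d$-SP-monoid satisfies the $d$-prime power condition by \cite[Proposition 3.10(1)]{R} and is primary $d$-ideal inclusive by Lemma~\ref{Lem 3.7}; moreover, given a nonzero prime ideal $P$ of $R$, choosing a nonzero $x\in P$ and writing $xR$ as a product $I_1\cdots I_n$ of radical ideals of $R$, some $I_j$ lies in $P$, and this $I_j$ is an invertible radical ideal because it is a factor of the invertible ideal $xR$; thus $R$ satisfies condition (3) of Theorem~\ref{Thm 3.9}, whence condition (1). (Alternatively, one may invoke Vaughan and Yeagy's result \cite{V,Y} that an SP-domain is almost Dedekind.) This closes the chain of equivalences in part~(A).

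I do not anticipate a genuine obstacle: the argument is essentially a careful transcription. The points needing attention are the verification of the dictionary between the $d$-system notions ($\mathcal P(I)$, $\mathfrak X(R)$, primary $d$-ideals, the $d$-prime power condition) and their classical ring-theoretic counterparts, and the bookkeeping of which of the seven (resp. five) conditions of Theorem~\ref{Thm 3.10} (resp. Theorem~\ref{Thm 3.9}) corresponds to each condition of the corollary.
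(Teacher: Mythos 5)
Your proposal is correct and follows essentially the same route as the paper, which likewise deduces the corollary by specializing Theorems~\ref{Thm 3.9} and~\ref{Thm 3.10} to the modular system $d$ and using Lemma~\ref{Lem 3.7} to drop the primary-ideal-inclusive hypothesis. Your explicit bridge from ``SP-domain'' to ``$d$-almost Dedekind $d$-SP'' (factoring $xR$ into radical ideals to verify condition (3) of Theorem~\ref{Thm 3.9}) correctly fills the one step the paper leaves implicit in its ``easy consequence'' remark.
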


\begin{proof} This is an easy consequence of Lemma~\ref{Lem 3.7} and Theorems~\ref{Thm 3.9} and~\ref{Thm 3.10}.
\end{proof}

Note that there are examples of $t$-SP-monoids that fail to be $t$-almost Dedekind monoids. As shown in \cite[Example 4.2]{RR} there is some $t$-local $t$-SP-monoid $H$ such that every nontrivial $t$-ideal of $H$ is $t$-cancellative and $t$-$\dim(H)=2$. In particular, $H$ satisfies the $t$-prime power condition and $\mathcal{P}(I)\subseteq\mathfrak{X}(H)$ for each nontrivial $t$-finitely generated $t$-ideal of $H$. Note that $H$ does not satisfy the strong $t$-prime power condition, $H$ is not $t$-treed and $H$ is not primary $t$-ideal inclusive.

\section{On the $t$-system and the $w$-system}

In this section we study the $t$-system and its modularizations. We present stronger characterizations for these types of finitary ideal systems than in the section before. Besides that, we investigate the connections with the modularizations $\widetilde{r_p}$ of a finitary ideal system $r$ in general and describe $\widetilde{r_p}$-SP-monoids and $\widetilde{r_p}$-B\'ezout $\widetilde{r_p}$-SP-monoids. We also show that the $t$-class group of every radical factorial BF-monoid is torsionfree. Let $r$ be a finitary ideal system on $H$. We say that $H$ is an {\it $r$-finite conductor monoid} if $xH\cap yH$ is $r$-finitely generated for all $x,y\in H$.

\begin{proposition}\label{Prop 4.1}$[$cf. \cite{GI,Z}$]$ Let $\mathcal{P}$ be a set of prime $s$-ideals of $H$ such that $\bigcap_{P\in\mathcal{P}} H_P=H$ and $H_Q$ is a valuation monoid for every $Q\in\mathcal{P}$. Let $I$ and $J$ be $t$-ideals of $H$.
\begin{itemize}
\item[(1)] If $I$, $J$ and $I\cap J$ are $t$-finitely generated, then $(IJ)_t=((I\cap J)(I\cup J)_t)_t$.
\item[(2)] If $I$ and $J$ are $t$-invertible and $I\cap J$ is $t$-finitely generated, then $I\cap J$ and $(I\cup J)_t$ are $t$-invertible.
\item[(3)] If $H$ is a $t$-finite conductor monoid, then $H$ is a $t$-Pr\"ufer monoid.
\end{itemize}
\end{proposition}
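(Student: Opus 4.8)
The plan is to reduce everything to a single localization principle, namely that under the hypotheses on $\mathcal{P}$ (and assuming $H\neq G$, the case $H=G$ being trivial) every nontrivial divisorial ideal $K$ of $H$ satisfies $K=\bigcap_{P\in\mathcal{P}}KH_P$. This follows at once from $K=(K^{-1})^{-1}$: if $x\in\bigcap_{P}KH_P$ and $w\in K^{-1}$, then localizing $wK\subseteq H$ gives $wKH_P\subseteq H_P$, so $wx\in H_P$ for every $P$, hence $wx\in\bigcap_{P}H_P=H$; as $w$ was arbitrary, $x\in(K^{-1})^{-1}=K$. Alongside this I will use freely that each $H_P$ with $P\in\mathcal{P}$ is a valuation monoid, so $s$ is its only finitary ideal system and in particular $t$ localizes to $s$ at such $P$; that $(I\cap J)H_P=IH_P\cap JH_P$ for $s$-ideals $I,J$ of $H$ (if $i/t_1=j/t_2$ lies in both localizations then $it_2=jt_1\in I\cap J$); and that localization commutes with finite products and finite unions and, by the defining property of $(H\ssm P)^{-1}t$, with the $t$-operation.

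For part (1) I may assume $I$ and $J$ are nontrivial. One inclusion, $((I\cap J)(I\cup J)_t)_t\subseteq(IJ)_t$, needs no hypothesis at all, because $(I\cap J)I\subseteq JI$ and $(I\cap J)J\subseteq IJ$ give $(I\cap J)(I\cup J)\subseteq IJ$. For the reverse I set $L=((I\cap J)(I\cup J)_t)_t$; since $I$, $J$ and $I\cap J$ are $t$-finitely generated, so is the product $(I\cap J)(I\cup J)_t$, hence $L$ is divisorial and nontrivial and $L=\bigcap_{P}LH_P$ by the localization principle. Fixing $P\in\mathcal{P}$ and localizing (using that $t$ localizes to $s$), I get $LH_P=(IH_P\cap JH_P)(IH_P\cup JH_P)$ and $(IJ)_t\subseteq(IJ)_tH_P=(IH_P)(JH_P)$. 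Because $H_P$ is a valuation monoid the $s$-ideals $IH_P$ and $JH_P$ are comparable, so $(IH_P\cap JH_P)(IH_P\cup JH_P)=(IH_P)(JH_P)$; thus $(IJ)_t\subseteq LH_P$ for all $P$, whence $(IJ)_t\subseteq L$.

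Part (2) is then formal. Recalling that $t$-invertible $t$-ideals are $t$-finitely generated, part (1) applies to $I$ and $J$ and yields $(IJ)_t=((I\cap J)(I\cup J)_t)_t$. Now $(IJ)_t$ lies in the monoid of $t$-invertible $t$-ideals, and a $t$-ideal that divides a $t$-invertible $t$-ideal in the semigroup of $t$-ideals is itself $t$-invertible: if $A,B$ are $t$-ideals, $(AB)_t=E$ with $E$ $t$-invertible, and $(EC)_t=H$, then $(A(BC)_t)_t=H$ with $(BC)_t\subseteq A^{-1}$, so $(AA^{-1})_t=H$. Applying this with $\{A,B\}=\{I\cap J,\,(I\cup J)_t\}$ in either order shows both $I\cap J$ and $(I\cup J)_t$ are $t$-invertible. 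For part (3) I use the characterization (recalled in the text) that $H$ is $t$-Pr\"ufer if and only if $H_M$ is a valuation monoid for every $M\in t$-$\max(H)$. Given $a,b\in H$, the principal ideals $aH$ and $bH$ are $t$-invertible and $aH\cap bH$ is $t$-finitely generated by hypothesis, so by part (2) the $t$-ideal $(aH\cup bH)_t$ is $t$-invertible. Fixing $M\in t$-$\max(H)$, the localization $((aH\cup bH)_t)_M$ is $t_M$-invertible, and since $H_M$ is $t_M$-local its $t_M$-class group is trivial, so $((aH\cup bH)_t)_M=dH_M$ for some $d\in H_M$. Then $(d^{-1}aH_M\cup d^{-1}bH_M)_{t_M}=H_M$, and $t_M$-locality forces $d^{-1}a$ or $d^{-1}b$ to be a unit, so $aH_M$ and $bH_M$ are comparable; as $a,b$ were arbitrary, $H_M$ is a valuation monoid and $H$ is $t$-Pr\"ufer.

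The ideas above are short; the work is in the bookkeeping. The main obstacle will be keeping all the localizations honest: checking at each step that the relevant $t$-closure localizes correctly (using that $t$ restricts to $s$ on the valuation monoids $H_P$, and that $H_M$ is $t_M$-local for $M\in t$-$\max(H)$), that the auxiliary ideals $L$, $I\cap J$ and $(I\cup J)_t$ are genuinely divisorial or $t$-finitely generated so that the localization principle and part (1) actually apply, and that $t$-invertibility is preserved under localization and reflected by it at $t$-maximal ideals.
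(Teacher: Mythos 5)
Your proposal is correct. For parts (1) and (2) it is essentially the paper's argument: the paper packages your ``localization principle'' by observing that $X\mapsto\bigcap_{P\in\mathcal{P}}(X_s)_P$ is an ideal system below $v$, so every divisorial (in particular every $t$-finitely generated $t$-) ideal is the intersection of its localizations, and then compares $((I\cap J)(I\cup J))_t$ with $(IJ)_t$ locally at each $P\in\mathcal{P}$ exactly as you do; your direct $(K^{-1})^{-1}$ verification and your use of the trivial inclusion $(I\cap J)(I\cup J)\subseteq IJ$ are only cosmetic variations, and your divisibility argument in (2) just spells out what the paper leaves implicit. Part (3), however, is genuinely different. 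The paper first proves, by another $\mathcal{P}$-localization computation, that $A_t\cap xH$ is $t$-finitely generated for every finite $A\subseteq H$ and $x\in H$, and then runs an induction on the number of generators, applying (2) at each step to conclude that every $E_t$ with $E\subseteq H^{\bullet}$ finite is $t$-invertible (which is the definition of $t$-Pr\"ufer). You instead handle only two-generated ideals, where the $t$-finite conductor hypothesis applies verbatim, and then pass to the local characterization: $(aH\cup bH)_t$ is $t$-invertible by (2), its localization at any $M\in t$-$\max(H)$ is $t_M$-invertible hence principal because $H_M$ is $t_M$-local with trivial class group, and $t_M$-locality forces comparability of $aH_M$ and $bH_M$, so each $H_M$ is a valuation monoid and $H$ is $t$-Pr\"ufer. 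This is legitimate since the paper itself records both facts you invoke ($r$-Pr\"ufer is equivalent to all localizations at $r$-maximal ideals being valuation monoids, and $r$-local monoids have trivial $r$-class group); it buys you a shorter proof that avoids the induction and the auxiliary claim about $A_t\cap xH$, at the cost of leaning on that external characterization, whereas the paper's route stays entirely inside the $\mathcal{P}$-localization framework and proves the invertibility of all $t$-finitely generated ideals directly. Only trivial housekeeping is missing on your side (excluding zero generators in (3), and nontriviality of $L$ in (1)), and these are immediate.
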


\begin{proof} Observe that $r:\mathbb{P}(H)\rightarrow\mathbb{P}(H)$ defined by $X_r=\bigcap_{P\in\mathcal{P}} (X_s)_P$ for each $X\subseteq H$ is an ideal system on $H$. This implies that $r\leq v$, and hence $I=\bigcap_{P\in\mathcal{P}} I_P$ for each divisorial ideal $I$ of $H$.

(1) Let $I$, $J$ and $I\cap J$ be $t$-finitely generated. Then $(IJ)_t$ and $(I\cup J)_t$ are $t$-finitely generated. This implies that $((I\cap J)(I\cup J))_t=((I\cap J)(I\cup J)_t)_t$ is $t$-finitely generated. Therefore, it is sufficient to show that $(((I\cap J)(I\cup J))_t)_P=((IJ)_t)_P$ for each $P\in\mathcal{P}$. Let $P\in\mathcal{P}$. Since $H_P$ is a valuation monoid, we have that $I_P\subseteq J_P$ or $J_P\subseteq I_P$. Consequently, $(((I\cap J)(I\cup J))_t)_P=((I_P\cap J_P)(I_P\cup J_P))_{t_P}=(I_PJ_P)_{t_P}=((IJ)_t)_P$.

(2) Let $I$ and $J$ be $t$-invertible and let $I\cap J$ be $t$-finitely generated. Clearly, $I$ and $J$ are $t$-finitely generated, and thus $((I\cap J)(I\cup J)_t)_t=(IJ)_t$. Since $(IJ)_t$ is t-invertible, we have that $((I\cap J)(I\cup J)_t)_t$ is $t$-invertible, and hence $I\cap J$ and $(I\cup J)_t$ are $t$-invertible.

(3) Let $H$ be a $t$-finite conductor monoid. First we show that for each nonempty finite $A\subseteq H$ and each $x\in H$ it follows that $A_t\cap xH$ is $t$-finitely generated. Let $A\subseteq H$ be finite and nonempty and $x\in H$. Let $P\in\mathcal{P}$. Since $H_P$ is a valuation monoid, we have that $(A_t)_P=AH_P$. We infer that $(A_t\cap xH)_P=AH_P\cap xH_P=\bigcup_{b\in A} (bH_P\cap xH_P)=(\bigcup_{b\in A} (bH_P\cap xH_P))_{t_P}=((\bigcup_{b\in A} (bH\cap xH))_t)_P$. This implies that $A_t\cap xH=(\bigcup_{b\in A} (bH\cap xH))_t$ is $t$-finitely generated. Next we show by induction that for each $n\in\mathbb{N}$ and all $E\subseteq H^{\bullet}$ with $|E|=n$ it follows that $E_t$ is $t$-invertible. The statement is clearly true for $n=1$. Now let $n\in\mathbb{N}$ and $F\subseteq H^{\bullet}$ be such that $|F|=n+1$. There exist $E\subseteq F$ and $x\in F\setminus E$ such that $F=E\cup\{x\}$ and $|E|=n$. It follows by the previous claim that $E_t\cap xH$ is $t$-finitely generated. We infer by (2) that $F_t=(E_t\cup xH)_t$ is $t$-invertible.
\end{proof}

\begin{theorem}\label{Thm 4.2} The following are equivalent:
\begin{itemize}
\item[(1)] $H$ is a $t$-almost Dedekind $t$-SP-monoid.
\item[(2)] $H$ is a $t$-finite conductor monoid and every principal ideal of $H$ is a finite $t$-product of radical $t$-ideals of $H$.
\item[(3)] Every $t$-ideal of $H$ is a $t$-product of finitely many pairwise comparable radical $t$-ideals of $H$.
\item[(4)] The radical of every nontrivial principal ideal of $H$ is $t$-invertible.
\end{itemize}
\end{theorem}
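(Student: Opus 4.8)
\emph{Overall approach.} I would prove the equivalences by the cycle $(1)\Rightarrow(3)\Rightarrow(2)\Rightarrow(4)\Rightarrow(1)$, using Theorem~\ref{Thm 3.9}, Lemma~\ref{Lem 3.2}(2) and Proposition~\ref{Prop 4.1} as the main tools. The whole point is that, for the $t$-system, the single condition $(4)$ already encodes the auxiliary hypothesis $\mathcal{P}(I)\subseteq\mathfrak{X}(H)$ (for $t$-finitely generated $I$) that had to be imposed separately in Theorem~\ref{Thm 3.9}.

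\emph{$(4)\Rightarrow(1)$.} Assume the radical of every nontrivial principal ideal of $H$ is $t$-invertible. As recorded after Proposition~\ref{Prop 3.1}, each $H_M$ ($M\in t$-$\max(H)$) is then radical factorial, so Proposition~\ref{Prop 3.1} (with $r=t$) yields $H=\bigcap_{P\in\mathfrak{X}(H)}H_P$ with every $H_P$ ($P\in\mathfrak{X}(H)$) a DVM, and $\mathcal{P}(I)\subseteq\mathfrak{X}(H)$ for every $t$-invertible $t$-ideal $I$; note also that every nontrivial prime $t$-ideal contains $\sqrt{xH}$ for any nonzero element $x$ in it, hence a $t$-invertible radical $t$-ideal. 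The crucial step is to show that $H$ is a $t$-finite conductor monoid: for $x,y\in H$, the ideal $xH\cap yH$ is divisorial (an intersection of divisorial ideals), hence a $t$-ideal, and it localizes to a principal ideal at each $P\in\mathfrak{X}(H)$; using the $t$-invertibility of $\sqrt{xH}$, $\sqrt{yH}$ and $\sqrt{xyH}=\sqrt{xH}\cap\sqrt{yH}$, together with Proposition~\ref{Prop 4.1}(1),(2) applied to these radicals, one deduces that $(xH\cup yH)_t$ and then $xH\cap yH$ are $t$-invertible, so in particular $t$-finitely generated. Once $H$ is a $t$-finite conductor monoid, Proposition~\ref{Prop 4.1}(3) (with the family $\mathfrak{X}(H)$) shows $H$ is a $t$-Pr\"ufer monoid, hence $t$-treed, and Lemma~\ref{Lem 3.2}(2) (equivalently Theorem~\ref{Thm 3.9}, $(2)\Rightarrow(1)$) gives that $H$ is a $t$-almost Dedekind $t$-SP-monoid. (Alternatively, $t$-Pr\"ufer makes every nontrivial $t$-finitely generated $t$-ideal $t$-invertible, so $\mathcal{P}(I)\subseteq\mathfrak{X}(H)$ for all such $I$, and one invokes Theorem~\ref{Thm 3.9}, $(5)\Rightarrow(1)$.)

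\emph{The remaining implications.} For $(1)\Rightarrow(3)$ I would run the standard structure argument: in a $t$-almost Dedekind monoid $t$-$\max(H)=\mathfrak{X}(H)$ and $H$ is $t$-Pr\"ufer, and since $w\leq t$ every $t$-ideal $I$ is a $w$-ideal, so $I=\bigcap_{P\in\mathfrak{X}(H)}I_P$ by Lemma~\ref{Lem 2.1}(2); writing $I=(I_1\cdots I_n)_t$ with each $I_i$ a radical $t$-ideal (by $t$-SP), the exponents $e_P:=v_P(I)$ satisfy $e_P\leq n$, so with $m:=\max_P e_P$ and $J_k:=\bigcap\{P\in\mathfrak{X}(H):e_P\geq k\}$ one obtains nontrivial radical $t$-ideals $J_1\supseteq\cdots\supseteq J_m$ with $(J_1\cdots J_m)_t=I$ (check the localizations at the $P\in\mathfrak{X}(H)$). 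The implication $(3)\Rightarrow(2)$ is immediate for the second clause, and for the first clause the hypothesis makes each $H_M$ radical factorial, hence yields the DVM decomposition, from which $xH\cap yH$ is $t$-invertible exactly as in $(4)\Rightarrow(1)$. For $(2)\Rightarrow(4)$: the second clause of $(2)$ again produces the DVM decomposition, and with the $t$-finite conductor hypothesis Proposition~\ref{Prop 4.1}(3) gives that $H$ is $t$-Pr\"ufer; writing a nontrivial proper principal ideal $xH=(I_1\cdots I_n)_t$ with the $I_i$ radical, each $I_i$ is a factor of the $t$-invertible $t$-ideal $xH$ and is therefore $t$-invertible, so $\sqrt{xH}=I_1\cap\cdots\cap I_n$ is a finite intersection of $t$-finitely generated $t$-ideals; this intersection is $t$-finitely generated in the $t$-finite conductor monoid $H$ (by the argument in the proof of Proposition~\ref{Prop 4.1}(3)) and hence $t$-invertible by Proposition~\ref{Prop 4.1}(2).

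\emph{Main obstacle.} The genuinely delicate point, and precisely why Theorem~\ref{Thm 4.2} sharpens Theorem~\ref{Thm 3.9}, is showing that $t$-invertibility of the radicals of principal ideals (equivalently, the $t$-SP hypothesis) forces $H$ to be a $t$-finite conductor — hence $t$-Pr\"ufer — monoid; this is where the interplay of the $t$-system, divisoriality and Proposition~\ref{Prop 4.1} is indispensable, and it is where I would concentrate the effort.
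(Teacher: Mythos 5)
There is a genuine gap, and it sits exactly at the point you yourself flag as the ``main obstacle'': the claim, inside $(4)\Rightarrow(1)$ (and reused in your $(3)\Rightarrow(2)$), that the $t$-invertibility of $\sqrt{xH}$, $\sqrt{yH}$ and $\sqrt{xyH}$ together with Proposition~\ref{Prop 4.1}(1),(2) ``applied to these radicals'' yields that $(xH\cup yH)_t$ and $xH\cap yH$ are $t$-invertible. Proposition~\ref{Prop 4.1}(1) applied to $I=xH$, $J=yH$ requires $xH\cap yH$ to be $t$-finitely generated as an \emph{hypothesis}, which is precisely the finite-conductor property you are trying to establish; applied instead to $I=\sqrt{xH}$, $J=\sqrt{yH}$ it only gives $t$-invertibility of $(\sqrt{xH}\cup\sqrt{yH})_t$, and this says nothing about $(xH\cup yH)_t$, since $(xH\cup yH)_t\subseteq(\sqrt{xH}\cup\sqrt{yH})_t$ and $t$-invertibility does not pass to smaller ideals. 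Likewise, ``$xH\cap yH$ is divisorial and localizes to a principal ideal at every $P\in\mathfrak X(H)$'' is not enough: $t$-invertibility amounts to being $t$-finitely generated \emph{and} $t$-locally principal at the $t$-maximal ideals, and at this stage you do not yet know $t$-$\max(H)=\mathfrak X(H)$ --- that equality is essentially the conclusion $(1)$ you are after. The example recalled after Corollary~\ref{Cor 3.11} (a $t$-local $t$-SP-monoid of $t$-dimension $2$) shows that radical-factorization data alone cannot force Pr\"ufer/finite-conductor behaviour, so this step cannot be waved through.

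The paper avoids this circularity by never trying to prove the finite-conductor property before $(1)$. Its proof of $(4)\Rightarrow(1)$ shows by induction on $|E|$ that $\sqrt{E_t}$ is $t$-invertible for every finite $E\subseteq H^\bullet$: writing $F=E\cup\{x\}$, $I=\sqrt{E_t}$, $J=\sqrt{xH}$, the intersection $I\cap J=\sqrt{(xE)_t}$ is $t$-invertible (hence $t$-finitely generated) by the inductive hypothesis, so Proposition~\ref{Prop 4.1}(2) legitimately applies and gives $(I\cup J)_t$ $t$-invertible; a localization/divisoriality computation then shows $\sqrt{F_t}=\sqrt{(I\cup J)_t}=(I\cup J)_t$, and Theorem~\ref{Thm 3.9}(4) finishes. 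The finite-conductor statement in $(2)$ is only harvested afterwards, as the easy consequence $(1)\Rightarrow(2)$. Your implications $(1)\Rightarrow(3)$ (the chain $J_k=\bigcap\{P\in\mathfrak X(H)\mid e_P\geq k\}$, where one checks $J_k\subseteq P$ iff $e_P\geq k$ using that $P$ must contain one of the finitely many radicals $\sqrt{(\bigcup_{i\in S}A_i)_t}$) and $(2)\Rightarrow(4)$ are workable, but the cycle collapses unless you replace the finite-conductor shortcut by an argument of the paper's inductive type; note also the inclusions among your $J_k$ run the other way ($J_1\subseteq\cdots\subseteq J_m$), which is harmless for pairwise comparability.
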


\begin{proof} (1) $\Rightarrow$ (2): This is obvious.

(2) $\Rightarrow$ (1): By Proposition~\ref{Prop 3.1} we have that $\bigcap_{P\in\mathfrak X(H)} H_P=H$ and $H_Q$ is a DVM for every $Q\in\mathfrak X(H)$. It follows by Proposition~\ref{Prop 4.1}(3) that $H$ is a $t$-Pr\"ufer monoid, and hence $H$ is $t$-treed. Consequently, $H$ is a $t$-almost Dedekind $t$-SP-monoid by Theorem~\ref{Thm 3.9}.

(1) $\Rightarrow$ (3): This follows from \cite[Theorem 3.3(2)]{R}.

(3) $\Rightarrow$ (4): Let $x\in H^{\bullet}$. There exist $n\in\mathbb{N}$ and finitely many radical $t$-ideals $I_i$ of $H$ such that $I_i\subseteq I_{i+1}$ for each $i\in [1,n-1]$ and $xH=(\prod_{i=1}^n I_i)_t$. This implies that $\sqrt{xH}=\bigcap_{i=1}^n I_i=I_1$ is $t$-invertible.

(4) $\Rightarrow$ (1): By Theorem~\ref{Thm 3.9} it is sufficient to show that the radical of every nontrivial $t$-finitely generated $t$-ideal of $H$ is $t$-invertible.

It follows by Proposition~\ref{Prop 3.1} that $\bigcap_{P\in\mathfrak X(H)} H_P=H$, $H_Q$ is a DVM for each $Q\in\mathfrak X(H)$ and $\mathcal{P}(A)\subseteq\mathfrak X(H)$ for each $t$-invertible $t$-ideal $A$ of $H$.

It is sufficient to show by induction that for each $n\in\mathbb{N}$ and each $E\subseteq H^{\bullet}$ with $|E|=n$ it follows that $\sqrt{E_t}$ is $t$-invertible. The statement is clearly true for $n=1$. Now let $n\in\mathbb{N}$ and $F\subseteq H^{\bullet}$ be such that $|F|=n+1$. There exist $E\subseteq F$ and $x\in F\setminus E$ such that $|E|=n$ and $F=E\cup\{x\}$. Set $I=\sqrt{E_t}$ and $J=\sqrt{xH}$. Then $I$ and $J$ are $t$-invertible radical $t$-ideals of $H$. Observe that $\sqrt{F_t}=\sqrt{(I\cup J)_t}$, since the radical of every $t$-ideal of $H$ is a $t$-ideal of $H$. Moreover, $I\cap J=\sqrt{(xE)_t}$ is $t$-invertible, since $|xE|=|E|=n$. We infer by Proposition~\ref{Prop 4.1}(2) that $(I\cup J)_t$ is $t$-invertible. Note that

\begin{align*}
\sqrt{(I\cup J)_t}&=\bigcap_{P\in\mathfrak X(H), (I\cup J)_t\subseteq P} P=\Bigg(\bigcap_{P\in\mathfrak X(H), (I\cup J)_t\subseteq P} P_P\Bigg)\cap H\\
&=\Bigg(\bigcap_{P\in\mathfrak X(H), (I\cup J)_t\subseteq P} P_P\Bigg)\cap\Bigg(\bigcap_{P\in\mathfrak X(H), (I\cup J)_t\not\subseteq P} H_P\Bigg)\\
&=\Bigg(\bigcap_{P\in\mathfrak X(H), (I\cup J)_t\subseteq P} ((I\cup J)_t)_P\Bigg)\cap\Bigg(\bigcap_{P\in\mathfrak X(H), (I\cup J)_t\not\subseteq P} ((I\cup J)_t)_P\Bigg)\\
&=\bigcap_{P\in\mathfrak X(H)} ((I\cup J)_t)_P=(I\cup J)_t,
\end{align*}

\noindent where the first equality holds since $\mathcal{P}((I\cup J)_t)\subseteq\mathfrak X(H)$, and the last equality holds since $(I\cup J)_t$ is $t$-finitely generated (and hence divisorial). Therefore, $\sqrt{F_t}=(I\cup J)_t$ is $t$-invertible.
\end{proof}

\begin{theorem}\label{Thm 4.3} Let $H\not=G$ and let $p$ and $r$ be finitary ideal systems on $H$ such that $p$ is modular and $p\leq r$.
\begin{itemize}
\item[(A)] The following are equivalent:
\begin{itemize}
\item[(1)] $H$ is an $r$-almost Dedekind $r$-SP-monoid.
\item[(2)] $r$-$\max(H)=t$-$\max(H)$ and the radical of every nontrivial principal ideal of $H$ is $t$-invertible.
\item[(3)] $H$ is an $\widetilde{r_p}$-SP-monoid.
\end{itemize}
\item[(B)] The following are equivalent:
\begin{itemize}
\item[(1)] $H$ is an $r$-B\'ezout $r$-SP-monoid.
\item[(2)] $r$-$\max(H)=t$-$\max(H)$ and the radical of every principal ideal of $H$ is principal.
\item[(3)] $H$ is an $\widetilde{r_p}$-B\'ezout $\widetilde{r_p}$-SP-monoid.
\end{itemize}
\end{itemize}
\end{theorem}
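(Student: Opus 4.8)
The plan is to route everything through Corollary~\ref{Cor 3.8}: as soon as $H$ is known to be $r$-almost Dedekind (equivalently, $\widetilde{r_p}$-almost Dedekind), that corollary forces $\widetilde{r_p}=r=t$, and then $r$, $\widetilde{r_p}$ and $t$ are literally the same ideal system, so that the three phrases ``$r$-almost Dedekind $r$-SP'', ``$\widetilde{r_p}$-SP'' and ``$t$-almost Dedekind $t$-SP'' all say the same thing. Hence the real content is (i) passing between $r$ and $t$ at the level of maximal $r$-ideals, which is exactly Theorem~\ref{Thm 4.2}, and (ii) upgrading an $\widetilde{r_p}$-SP-monoid to an $\widetilde{r_p}$-almost Dedekind monoid, which is the only place where modularity of $p$ -- hence of $\widetilde{r_p}$, by Lemma~\ref{Lem 2.1}(4) -- is essentially used. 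Throughout I freely use that principal ideals are $\widetilde{r_p}$-invertible and that an $\widetilde{r_p}$-factor of an $\widetilde{r_p}$-invertible $\widetilde{r_p}$-ideal is again $\widetilde{r_p}$-invertible.

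For part (A) I first dispatch the routine implications. (1)$\Rightarrow$(2): Corollary~\ref{Cor 3.8} gives $r=t$, so $r$-$\max(H)=t$-$\max(H)$, and Theorem~\ref{Thm 4.2}((1)$\Rightarrow$(4)) gives that the radical of every nontrivial principal ideal is $t$-invertible. (2)$\Rightarrow$(1): Theorem~\ref{Thm 4.2}((4)$\Rightarrow$(1)) makes $H$ a $t$-almost Dedekind $t$-SP-monoid; since $r$-$\max(H)=t$-$\max(H)$ and $H_M$ is a DVM for each $M\in t$-$\max(H)$, $H$ is $r$-almost Dedekind, so $\widetilde{r_p}=r=t$ by Corollary~\ref{Cor 3.8}, and then $H$ is $r$-SP because it is $t$-SP. (1)$\Rightarrow$(3): by Corollary~\ref{Cor 3.8}, $\widetilde{r_p}=r=t$, so being $\widetilde{r_p}$-SP is the same as being $r$-SP.

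The crux is (A)(3)$\Rightarrow$(1). Assume $H$ is an $\widetilde{r_p}$-SP-monoid; recall $\widetilde{r_p}$ is finitary and (since $p$ is modular) modular, by Lemma~\ref{Lem 2.1}(1),(4). I verify the three hypotheses of Theorem~\ref{Thm 3.9}(3) for the ideal system $\widetilde{r_p}$: (a) $H$ satisfies the $\widetilde{r_p}$-prime power condition, because every $\widetilde{r_p}$-SP-monoid does; (b) $H$ is primary $\widetilde{r_p}$-ideal inclusive by Lemma~\ref{Lem 3.7}, since $\widetilde{r_p}$ is modular; (c) given a nontrivial prime $\widetilde{r_p}$-ideal $P$, pick $x\in P\setminus z(H)$ (so $xH$ is a nontrivial principal ideal) and write $xH=(I_1\cdots I_n)_{\widetilde{r_p}}$ with each $I_j$ a radical $\widetilde{r_p}$-ideal of $H$; then each $I_j$, being an $\widetilde{r_p}$-factor of the $\widetilde{r_p}$-invertible ideal $xH$, is $\widetilde{r_p}$-invertible, and from $I_1\cdots I_n\subseteq xH\subseteq P$ and primeness of $P$ we get $I_j\subseteq P$ for some $j$, so $P$ contains an $\widetilde{r_p}$-invertible radical $\widetilde{r_p}$-ideal. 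By Theorem~\ref{Thm 3.9}((3)$\Rightarrow$(1)), $H$ is an $\widetilde{r_p}$-almost Dedekind $\widetilde{r_p}$-SP-monoid; Corollary~\ref{Cor 3.8} then yields $\widetilde{r_p}=r=t$, so $H$ is $r$-almost Dedekind $r$-SP. This step is the main obstacle: for a non-modular finitary ideal system an SP-monoid need not be almost Dedekind (cf.\ \cite[Example 4.2]{RR}), and it is exactly Lemma~\ref{Lem 3.7} together with Theorem~\ref{Thm 3.9}(3) that closes the gap.

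Part (B) is parallel, with Theorem~\ref{Thm 3.10} replacing Theorem~\ref{Thm 3.9} and ``the radical of every principal ideal is principal'' replacing ``$t$-invertible''. (1)$\Rightarrow$(2): an $r$-B\'ezout $r$-SP-monoid is $r$-Pr\"ufer, hence $r$-treed, and being $r$-SP each of its nontrivial prime $r$-ideals contains an $r$-invertible radical $r$-ideal by the argument of (c) (with $r$ in place of $\widetilde{r_p}$); Lemma~\ref{Lem 3.2}(2) then makes $H$ $r$-almost Dedekind, so $\widetilde{r_p}=r=t$ by Corollary~\ref{Cor 3.8} and in particular $r$-$\max(H)=t$-$\max(H)$, while Theorem~\ref{Thm 3.10}((1)$\Rightarrow$(4)) gives that the radical of every principal ideal is principal. (2)$\Rightarrow$(1): the radical of a principal ideal being principal makes it $t$-invertible, so by (A)((2)$\Rightarrow$(1)) $H$ is $r$-almost Dedekind $r$-SP with $r=t$; as $r$-almost Dedekind monoids are $r$-treed, Theorem~\ref{Thm 3.10}((5)$\Rightarrow$(1)) makes $H$ an $r$-B\'ezout $r$-SP-monoid. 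Finally (1)$\Leftrightarrow$(3): by the above an $r$-B\'ezout $r$-SP-monoid has $\widetilde{r_p}=r=t$, so $\widetilde{r_p}$-B\'ezout $\widetilde{r_p}$-SP coincides with $r$-B\'ezout $r$-SP; conversely an $\widetilde{r_p}$-B\'ezout $\widetilde{r_p}$-SP-monoid is in particular $\widetilde{r_p}$-SP, so (A)((3)$\Rightarrow$(1)) applies and again $\widetilde{r_p}=r=t$, whence $H$ is $r$-B\'ezout $r$-SP.
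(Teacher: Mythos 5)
Your proposal is correct and follows essentially the same route as the paper: the same key ingredients (Lemma~\ref{Lem 2.1}, Lemma~\ref{Lem 3.7}, Theorems~\ref{Thm 3.9} and~\ref{Thm 3.10}, Theorem~\ref{Thm 4.2}, and the standard factorization argument producing an invertible radical ideal inside each nontrivial prime) are used in the same way. The only cosmetic difference is that you package the passage between $r$, $\widetilde{r_p}$ and $t$ through the final conclusion $\widetilde{r_p}=r=t$ of Corollary~\ref{Cor 3.8}, where the paper argues directly with $r$-$\max(H)=\mathfrak X(H)=t$-$\max(H)$ and $r\leq t$, and handles (B) via Proposition~\ref{Prop 3.4}(2); both are sound.
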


\begin{proof} (A) (1) $\Rightarrow$ (2): First let $H$ be an $r$-almost Dedekind $r$-SP-monoid. Clearly, $r$-$\max(H)=\mathfrak X(H)$, and since every height-one prime $s$-ideal of $H$ is a $t$-ideal, we infer that $r$-$\max(H)=t$-$\max(H)$. By Theorem~\ref{Thm 3.9}, the radical of every nontrivial principal ideal of $H$ is $r$-invertible. Since $r\leq t$, we have that the radical of every nontrivial principal ideal of $H$ is $t$-invertible.

(2) $\Rightarrow$ (1): Now let $r$-$\max(H)=t$-$\max(H)$ and let the radical of every nontrivial principal ideal of $H$ be $t$-invertible. It follows by Theorem~\ref{Thm 4.2} that $H$ is a $t$-almost Dedekind monoid, and hence $r$-$\max(H)=t$-$\max(H)=\mathfrak X(H)$. Therefore, $H$ is $r$-treed and every $t$-invertible $t$-ideal of $H$ is an $r$-invertible $r$-ideal of $H$. Consequently, $H$ is an $r$-almost Dedekind $r$-SP-monoid by Theorem~\ref{Thm 3.9}.

(2) $\Leftrightarrow$ (3): By Lemmas~\ref{Lem 2.1}(4) and~\ref{Lem 3.7}, Theorem~\ref{Thm 3.9} and \cite[Proposition 3.10(1)]{R}, we have that $H$ is an $\widetilde{r_p}$-SP-monoid if and only if $H$ is an $\widetilde{r_p}$-almost Dedekind $\widetilde{r_p}$-SP-monoid. Now applying the equivalence of (1) and (2) to $\widetilde{r_p}$ and using the fact that $r$-$\max(H)=\widetilde{r_p}$-$\max(H)$ gives us the desired equivalence.

\smallskip
(B) This is an easy consequence of (A), Proposition~\ref{Prop 3.4}(2) and Theorem~\ref{Thm 3.10}.
\end{proof}

\begin{corollary}\label{Cor 4.4} The following are equivalent:
\begin{itemize}
\item[(1)] $H$ is a $t$-almost Dedekind $t$-SP-monoid.
\item[(2)] $H$ is a $w$-SP-monoid.
\item[(3)] $H$ is a $w$-finite conductor monoid and every principal ideal of $H$ is a finite $w$-product of radical $w$-ideals of $H$.
\item[(4)] Every $w$-ideal of $H$ is a $w$-product of finitely many pairwise comparable radical $w$-ideals of $H$.
\item[(5)] The radical of every nontrivial principal ideal of $H$ is $w$-invertible.
\end{itemize}
\end{corollary}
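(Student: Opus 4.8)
The plan is to deduce the corollary from Theorems~\ref{Thm 4.2} and~\ref{Thm 4.3}, applied with $r=t$ and $p=s$ (so that $\widetilde{r_p}=\widetilde{t_s}=w$), together with the transfer properties of $w$ recorded in Lemma~\ref{Lem 2.1}. First I would dispose of the degenerate case $H=G$: then $z(H)$ and $H$ are the only $t$-ideals of $H$, hence (as $w\leq t$) the only $w$-ideals of $H$; both are radical, $H$ is $t$-almost Dedekind by definition, and one checks directly that all of (1)--(5) hold. So assume $H\neq G$ from now on.

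Theorem~\ref{Thm 4.3}(A) with $(r,p)=(t,s)$ (its condition ``$t$-$\max(H)=t$-$\max(H)$'' being vacuous) identifies (1), (2), and the assertion ``the radical of every nontrivial principal ideal of $H$ is $t$-invertible'' with one another; the last is also how Theorem~\ref{Thm 4.2} describes (1). The organising observation is that as soon as any of (1)--(5) holds, $H$ is a $t$-almost Dedekind monoid, and hence $w=\widetilde{t_s}=t$ by Corollary~\ref{Cor 3.8}. Granting this, conditions (3), (4), (5) coincide \emph{verbatim} with conditions (2), (3), (4) of Theorem~\ref{Thm 4.2} (with $w$ read for $t$), so (1)$\Rightarrow$(3), (1)$\Rightarrow$(4) and (1)$\Rightarrow$(5) are immediate; it therefore remains to prove (3)$\Rightarrow$(1), (4)$\Rightarrow$(1) and (5)$\Rightarrow$(1) (which, in passing, justify the organising observation for those conditions).

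For (5)$\Rightarrow$(1): the radical of a principal ideal is always a $t$-ideal, and by Lemma~\ref{Lem 2.1}(3) the monoids $\mathcal{I}_w^*(H)$ and $\mathcal{I}_t^*(H)$ coincide, so ``$w$-invertible'' and ``$t$-invertible'' mean the same thing for such radicals; thus (5) is condition (4) of Theorem~\ref{Thm 4.2} and (1) follows. For (4)$\Rightarrow$(1): dropping ``comparable'' from (4) shows that $H$ is a $w$-SP-monoid, i.e.\ (2), and (1)$\Leftrightarrow$(2) was noted above. The substantive case is (3)$\Rightarrow$(1), where I would transplant the proof of (2)$\Rightarrow$(1) in Theorem~\ref{Thm 4.2} to the $w$-system. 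From ``every principal ideal of $H$ is a finite $w$-product of radical $w$-ideals'', the remark preceding Proposition~\ref{Prop 3.1} applied to $w$, together with $w$-$\max(H)=t$-$\max(H)$ (Lemma~\ref{Lem 2.1}(2)), gives that $H_M$ is radical factorial for each $M\in t$-$\max(H)$; Proposition~\ref{Prop 3.1} then yields $\bigcap_{P\in\mathfrak X(H)}H_P=H$ with $H_Q$ a DVM for every $Q\in\mathfrak X(H)$. Since a conductor $xH\cap yH$ is divisorial, hence a $t$-ideal, it is $t$-finitely generated whenever it is $w$-finitely generated, so a $w$-finite conductor monoid is a $t$-finite conductor monoid, and Proposition~\ref{Prop 4.1}(3) (with $\mathcal P=\mathfrak X(H)$) makes $H$ a $t$-Pr\"ufer monoid. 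Each $H_M$, $M\in t$-$\max(H)$, is then a valuation monoid which is radical factorial, hence a DVM by Lemma~\ref{Lem 3.2}(1) (in a radical factorial valuation monoid every nontrivial prime $s$-ideal contains a radical principal ideal); so $H$ is $t$-almost Dedekind, whence $w=t$ by Corollary~\ref{Cor 3.8}, and now (3) is exactly condition (2) of Theorem~\ref{Thm 4.2}, which delivers (1).

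I expect the main obstacle to be exactly this transfer in (3)$\Rightarrow$(1): feeding the $w$-theoretic hypotheses of (3) into Propositions~\ref{Prop 3.1} and~\ref{Prop 4.1}, which are stated for principal, resp.\ $t$-, ideals. This rests on the identifications $w$-$\max(H)=t$-$\max(H)$ and $\mathcal{I}_w^*(H)=\mathcal{I}_t^*(H)$ from Lemma~\ref{Lem 2.1} and on the divisoriality of conductors $xH\cap yH$; with those in hand, the remaining implications are routine bookkeeping against the $t$-system results already established.
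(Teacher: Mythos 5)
Your proposal is correct, and it leans on the same toolkit as the paper (Theorems~\ref{Thm 4.2} and~\ref{Thm 4.3}(A), Lemma~\ref{Lem 2.1}(2),(3), Propositions~\ref{Prop 3.1} and~\ref{Prop 4.1}), but it is organized around a different pivot. The paper's cycle is (1)$\Rightarrow$(2)$\Rightarrow$(3)$\Rightarrow$(1), (2)$\Rightarrow$(4)$\Rightarrow$(5), (5)$\Rightarrow$(1), where (2)$\Rightarrow$(4)$\Rightarrow$(5) is only said to go ``along the same lines as in Theorem~\ref{Thm 4.2}'', i.e.\ the $t$-arguments are re-run for $w$; your ``organising observation'' replaces this: once (1) holds, Corollary~\ref{Cor 3.8} forces $\widetilde{t_s}=t$, i.e.\ $w=t$, so (3), (4), (5) become verbatim the corresponding conditions of Theorem~\ref{Thm 4.2} and the forward implications need no reproving -- a genuinely cleaner way to discharge those directions (and you also settle the $H=G$ case, which Theorem~\ref{Thm 4.3} excludes and the paper passes over). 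The trade-off is your (3)$\Rightarrow$(1): the paper handles it in two lines by noting that the radical $w$-factors of a principal ideal divide a $w$-invertible ideal, hence are $w$-invertible, so by Lemma~\ref{Lem 2.1}(3) the factorization is already a $t$-factorization into radical $t$-ideals (and $xH\cap yH$, being divisorial, is $t$-finitely generated), after which Theorem~\ref{Thm 4.2}(2) applies directly; you instead re-run the internal machinery of that theorem at the $w$-level (remark before Proposition~\ref{Prop 3.1}, Proposition~\ref{Prop 3.1}, Proposition~\ref{Prop 4.1}(3), Lemma~\ref{Lem 3.2}(1)) to get $t$-almost Dedekind first and only then collapse $w=t$. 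Both routes are sound; your (4)$\Rightarrow$(1) via weakening to (2) and your (5)$\Rightarrow$(1) via $\mathcal{I}_w^*(H)=\mathcal{I}_t^*(H)$ match the paper's reasoning.
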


\begin{proof} (1) $\Rightarrow$ (2): By Theorem~\ref{Thm 4.2}, the radical of every nontrivial principal ideal is $t$-invertible. As pointed out before, we have that $w$-$\max(H)=t$-$\max(H)$. We infer by Theorem~\ref{Thm 4.3}(A) that $H$ is a $w$-almost Dedekind $w$-SP-monoid.

(2) $\Rightarrow$ (3): This is obvious, since $H$ is a $w$-almost Dedekind monoid.

(3) $\Rightarrow$ (1): Let $x,y\in H$. Then $xH\cap yH=E_w$ for some finite $E\subseteq H$. Since $w\leq t$, we infer that $xH\cap yH=(xH\cap yH)_t=(E_w)_t=E_t$. Therefore, $H$ is a $t$-finite conductor monoid. Note that every nontrivial principal ideal of $H$ is a finite $w$-product of $w$-invertible radical $w$-ideals of $H$. Therefore, every nontrivial principal ideal of $H$ is a finite $t$-product of ($t$-invertible) radical $t$-ideals of $H$ by Lemma~\ref{Lem 2.1}(3). The statement now follows from Theorem~\ref{Thm 4.2}.

(2) $\Rightarrow$ (4) $\Rightarrow$ (5): This can be proved along the same lines as in Theorem~\ref{Thm 4.2}.

(5) $\Rightarrow$ (1): Since every $w$-invertible $w$-ideal of $H$ is a $t$-invertible $t$-ideal of $H$, the radical of every nontrivial principal ideal of $H$ is $t$-invertible. Therefore, $H$ is a $t$-almost Dedekind $t$-SP-monoid by Theorem~\ref{Thm 4.2}.
\end{proof}

\begin{corollary}\label{Cor 4.5} The following are equivalent:
\begin{itemize}
\item[(1)] $H$ is a $t$-B\'ezout $t$-SP-monoid.
\item[(2)] $H$ is a $w$-B\'ezout $w$-SP-monoid.
\item[(3)] The radical of every principal ideal of $H$ is principal.
\item[(4)] Every principal ideal of $H$ is a product of finitely many pairwise comparable radical principal ideals.
\end{itemize}
\end{corollary}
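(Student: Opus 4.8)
The plan is to read the equivalence of (1), (2) and (3) directly off Theorem~\ref{Thm 4.3}(B), and then to obtain the equivalence with (4) by a short computation that exploits the fact that in a $t$-Bézout monoid every $t$-invertible $t$-ideal is principal. First I would dispose of the degenerate case $H=G$: there $s=w=t$ and the only $s$-ideals of $H$ are $z(H)$ and $H$, both of which are radical, so (1)--(4) hold trivially. Assume henceforth that $H\neq G$.

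For (1) $\Leftrightarrow$ (2) $\Leftrightarrow$ (3) I would apply Theorem~\ref{Thm 4.3}(B) with $p=s$ and $r=t$. Since $s$ is a modular finitary ideal system with $s\leq t$ and $\widetilde{t_s}=w$, that theorem asserts that $H$ is a $t$-Bézout $t$-SP-monoid if and only if $t$-$\max(H)=t$-$\max(H)$ and the radical of every principal ideal of $H$ is principal, if and only if $H$ is a $w$-Bézout $w$-SP-monoid; the condition $t$-$\max(H)=t$-$\max(H)$ being vacuous, this is precisely (1) $\Leftrightarrow$ (3) $\Leftrightarrow$ (2). (If one prefers to avoid Theorem~\ref{Thm 4.3}, note that (1) $\Rightarrow$ (3) by Theorem~\ref{Thm 3.10}, while (3) forces $H$ to be a $t$-almost Dedekind $t$-SP-monoid by Theorem~\ref{Thm 4.2} and forces $\mathcal{C}_t(H)$ to be trivial by Proposition~\ref{Prop 3.4}(2), whence $H$ is $t$-Bézout; the same reasoning with $w$ in place of $t$, together with $w$-$\max(H)=t$-$\max(H)$, then handles (2).)

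The implication (4) $\Rightarrow$ (3) is elementary: given $xH=\prod_{i=1}^{n}I_i$ with the $I_i$ radical principal and pairwise comparable, reorder so that $I_1\subseteq\cdots\subseteq I_n$; then $\sqrt{xH}=\sqrt{\prod_{i=1}^{n}I_i}=\bigcap_{i=1}^{n}\sqrt{I_i}=\bigcap_{i=1}^{n}I_i=I_1$ is principal, using that the radical of a finite product of $s$-ideals is the intersection of the radicals of the factors (the trivial principal ideals $z(H)$ and $H$ require no separate comment).

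For (3) $\Rightarrow$ (4): by the equivalences already established $H$ is a $t$-Bézout $t$-SP-monoid, and since the radical of a nontrivial principal ideal is a nontrivial principal ideal, hence a $t$-invertible $t$-ideal, condition (3) is exactly condition (4) of Theorem~\ref{Thm 4.2}; thus $H$ is a $t$-almost Dedekind $t$-SP-monoid, and Theorem~\ref{Thm 4.2}(3) provides $xH=(\prod_{i=1}^{n}I_i)_t$ with $I_1\subseteq\cdots\subseteq I_n$ radical $t$-ideals. Each $I_i$ $t$-divides the $t$-invertible $t$-ideal $xH$, hence is itself $t$-invertible; as $H$ is $t$-Bézout its $t$-class group is trivial, so each $I_i$ is principal, say $I_i=z_iH$ with $z_i$ radical and $z_1H\subseteq\cdots\subseteq z_nH$. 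Since $\prod_{i=1}^{n}I_i=(\prod_{i=1}^{n}z_i)H$ is already principal, hence a $t$-ideal, we obtain $xH=(\prod_{i=1}^{n}I_i)_t=\prod_{i=1}^{n}(z_iH)$, a product of finitely many pairwise comparable radical principal ideals, which is (4). This last step is the only real obstacle: one must upgrade the $t$-product decomposition of Theorem~\ref{Thm 4.2}(3) to an honest product of principal ideals, and it works because the $t$-invertible $t$-ideals of a $t$-Bézout monoid are principal while a finite product of principal ideals is again principal, so the $t$-operation in $(\prod I_i)_t$ is redundant.
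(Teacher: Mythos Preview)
Your proof is correct. The equivalences $(1)\Leftrightarrow(2)\Leftrightarrow(3)$ via Theorem~\ref{Thm 4.3}(B) and the implication $(4)\Rightarrow(3)$ are exactly what the paper does.

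For $(3)\Rightarrow(4)$ you take a different route than the paper. The paper argues directly: from $(3)$ and \cite[Lemma~2.3(2)]{R} one gets the Principal Ideal Theorem, then one iteratively defines $z_\ell H=\sqrt{(x/\prod_{i<\ell}z_i)H}$, observes the chain $z_\ell H\subseteq z_{\ell+1}H$, picks $k$ with $z_1^k\in xH$, and invokes Lemma~\ref{Lem 3.3} to force $z_{k+1}\in H^\times$, whence $xH=\prod_{i=1}^k z_iH$. You instead feed $(3)$ back through the already-established equivalence $(3)\Rightarrow(1)$ and then quote Theorem~\ref{Thm 4.2}(3) to obtain a $t$-product decomposition $xH=(\prod_i I_i)_t$ into pairwise comparable radical $t$-ideals; the $t$-B\'ezout hypothesis then converts each $I_i$ into a principal ideal and collapses the $t$-closure. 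Your argument is slicker in that it reuses existing machinery and avoids the explicit termination argument via Lemma~\ref{Lem 3.3}; the paper's argument is more self-contained and constructive, producing the factorization without passing through Theorem~\ref{Thm 4.2} a second time. Both are fine.
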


\begin{proof} (1) $\Leftrightarrow$ (2) $\Leftrightarrow$ (3): This follows from Theorem~\ref{Thm 4.3}(B).

(3) $\Rightarrow$ (4): It follows by \cite[Lemma 2.3(2)]{R} that $H$ satisfies the Principal Ideal Theorem. Let $x\in H$ be nonzero. Clearly, there is a sequence $(z_i)_{i\in\mathbb{N}}$ of nonzero radical elements of $H$ such that $\sqrt{(x/\prod_{i=1}^{\ell-1} z_i)H}=z_{\ell}H$ for each $\ell\in\mathbb{N}$. Moreover, we have that $z_{\ell}H\subseteq z_{\ell+1}H$ for all $\ell\in\mathbb{N}$. Since $\sqrt{xH}=z_1H$, there is some $k\in\mathbb{N}$ such that $z_1^k\in xH$. We infer by Lemma~\ref{Lem 3.3} that $z_{k+1}\in H^{\times}$, and thus $xH=\prod_{i=1}^k z_iH$.

(4) $\Rightarrow$ (3): Let $x\in H^{\bullet}$. Then there exist $n\in\mathbb{N}$ and finitely many radical principal ideals $I_i$ of $H$ such that $xH=\prod_{i=1}^n I_i$ and $I_i\subseteq I_{i+1}$ for all $i\in [1,n-1]$. It follows that $\sqrt{xH}=\bigcap_{i=1}^n I_i=I_1$ is principal.
\end{proof}

Note that $w$ can be replaced by $w_p$ in Corollaries~\ref{Cor 4.4} and~\ref{Cor 4.5}, where $p$ is an arbitrary modular finitary ideal system on $H$.

\begin{corollary}\label{Cor 4.6} $H$ is factorial if and only if the radical of every principal ideal of $H$ is principal and $H$ satisfies the ascending chain condition on radical principal ideals.
\end{corollary}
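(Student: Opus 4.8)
The plan is to derive the forward implication from the standard structure theory of factorial monoids, and the converse by combining the characterizations already obtained for the $t$-system (Corollaries~\ref{Cor 3.11},~\ref{Cor 4.5} and Theorem~\ref{Thm 3.10}) with a direct ``peeling'' argument that converts the chain condition into the assertion that every height-one prime $t$-ideal is principal. Throughout we may assume $H\neq G$.

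For the forward direction, suppose $H$ is factorial. Writing a nonzero nonunit as a unit times $p_1^{e_1}\cdots p_n^{e_n}$ with the $p_i$ pairwise nonassociate primes, we get $\sqrt{xH}=(p_1\cdots p_n)H$, so the radical of every principal ideal is principal; and along a strictly ascending chain of principal ideals the total number of prime factors (with multiplicity) strictly decreases, so $H$ satisfies the ascending chain condition on principal ideals, hence in particular on radical principal ideals. (Alternatively, a radical element is squarefree, so a chain of radical principal ideals corresponds to a strictly decreasing chain of finite sets of prime classes.)

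For the converse, assume the radical of every principal ideal of $H$ is principal and $H$ satisfies the ascending chain condition on radical principal ideals. By Corollary~\ref{Cor 4.5} and Theorem~\ref{Thm 3.10}, $H$ is a $t$-B\'ezout $t$-SP-monoid; in particular $H$ is a GCD-monoid, $H$ is $t$-almost Dedekind with $t$-$\max(H)=\mathfrak X(H)$, each $H_P$ with $P\in\mathfrak X(H)$ is a DVM, $H=\bigcap_{P\in\mathfrak X(H)}H_P$ by Proposition~\ref{Prop 3.1}, and every nontrivial prime $t$-ideal of $H$ contains a nontrivial radical principal ideal. Using $H=\bigcap_PH_P$ and the discrete valuations $v_P$ ($P\in\mathfrak X(H)$) I would first record: a nonzero $a$ generates a radical ideal if and only if $v_P(a)\le 1$ for all $P$; $P\in\mathcal P(aH)$ if and only if $v_P(a)\ge 1$; and $v_P(\gcd(a,b))=\min(v_P(a),v_P(b))$ for all $P$ (since in a GCD-monoid $a/\gcd(a,b)$ and $b/\gcd(a,b)$ are coprime, and an ideal $aH$ is determined by the vector $(v_P(a))_P$). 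The heart of the proof is then to show that every $P\in\mathfrak X(H)$ is principal. Suppose not; then no principal ideal equals $P$. Pick a nontrivial radical principal ideal $b_0H\subseteq P$, so $v_P(b_0)=1$. Recursively, given a radical $b_k$ with $v_P(b_k)=1$ (hence $b_kH$ is a nontrivial radical principal ideal contained in $P$ but $\neq P$), the set $\mathcal P(b_kH)$ is not $\{P\}$, so choose $Q_k\in\mathcal P(b_kH)\ssm\{P\}$ and, since $Q_k\not\subseteq P$, an element $q_k\in Q_k\ssm P$, and put $b_{k+1}=b_k/\gcd(b_k,q_k)\in H$. The valuation identities give $v_R(b_{k+1})=\max(0,v_R(b_k)-v_R(q_k))\in\{0,1\}$ for all $R$, so $b_{k+1}H$ is radical; $v_P(b_{k+1})=v_P(b_k)=1$ because $q_k\notin P$; and $v_{Q_k}(b_{k+1})=0<1=v_{Q_k}(b_k)$, so $b_kH\subsetneq b_{k+1}H$. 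This produces an infinite strictly ascending chain $b_0H\subsetneq b_1H\subsetneq\cdots$ of radical principal ideals, contradicting the hypothesis. Hence every $P\in\mathfrak X(H)$ is principal. Since $H$ is $t$-treed with $t$-$\max(H)=\mathfrak X(H)$, every nontrivial prime $t$-ideal of $H$ lies in $\mathfrak X(H)$ and is therefore a principal prime ideal, so it contains a nontrivial prime principal ideal; thus $H$ is factorial.

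The step I expect to be the main obstacle is the recursive construction in the converse: the delicate point is that the chain condition is assumed only for \emph{radical} principal ideals (not for all principal ideals), and what makes the argument go through is precisely that $b_k/\gcd(b_k,q_k)$ is automatically radical, keeps $P$ among its minimal primes, and strictly enlarges $b_kH$ by shedding $Q_k$ (possibly shedding further primes as well, which causes no harm). Verifying the three valuation identities and checking that the recursion can only halt when $P$ happens to be principal is where the care lies; everything else is bookkeeping with the cited results.
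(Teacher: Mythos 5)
Your proof is correct, but it takes a genuinely different route from the paper. The paper's proof is essentially a one-line reduction: by Corollary~\ref{Cor 4.5} the hypothesis that radicals of principal ideals are principal is equivalent to every principal ideal being a finite product of pairwise comparable radical principal ideals (so $H$ is in particular radical factorial and a $t$-B\'ezout $t$-SP-monoid by Theorem~\ref{Thm 3.10}), and the remaining step --- radical factorial together with the ascending chain condition on radical principal ideals is equivalent to factorial --- is quoted from \cite[Theorem 2.14]{R}. You instead reprove that remaining step from scratch: after using Corollary~\ref{Cor 4.5}, Theorem~\ref{Thm 3.10} and Proposition~\ref{Prop 3.1} to get the GCD/$t$-almost Dedekind structure with $H=\bigcap_{P\in\mathfrak X(H)}H_P$ and DVM localizations, you show directly that every $P\in\mathfrak X(H)$ is principal via the peeling recursion $b_{k+1}=b_k/\gcd(b_k,q_k)$, which preserves radicality and $v_P=1$ while strictly enlarging the radical principal ideal, so the chain condition forces $P=b_kH$ for some $k$; factoriality then follows since $t$-$\max(H)=\mathfrak X(H)$. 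This buys a self-contained argument, at the cost of length, and the recursion checks out. Two points deserve tightening: the identity $v_P(\gcd(a,b))=\min\{v_P(a),v_P(b)\}$ is only justified by a parenthetical remark --- the clean argument is that $\gcd(a,b)H=\{a,b\}_t$ (since $H$ is a GCD-monoid, equivalently $t$-B\'ezout, and $t=v$ on finite sets) and $(\{a,b\}_t)_P=aH_P\cup bH_P$ because $H_P$ is a valuation monoid; similarly, ``$a$ is radical iff $v_P(a)\le 1$ for all $P\in\mathfrak X(H)$'' needs $H=\bigcap_P H_P$ for the ``if'' direction and localization of radical ideals (or the principality of $\sqrt{aH}$ with a radical generator) for the ``only if'' direction. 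With those justifications spelled out, your argument is a valid alternative to citing \cite[Theorem 2.14]{R}.
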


\begin{proof} This is an immediate consequence of Theorem~\ref{Thm 3.10}, Corollary~\ref{Cor 4.5} and \cite[Theorem 2.14]{R}.
\end{proof}

Finally, we give a partial positive answer to the (so far) unsolved problem of whether the $t$-class group of a radical factorial monoid is torsionfree. The following result shows that the $t$-class group of a radical factorial monoid has to satisfy a ``weak form'' of being torsionfree. Let $H$ be a monoid and $\mathcal{A}\subseteq\mathbb{P}(H)$. A function $\lambda:\mathcal{A}\rightarrow\mathbb{N}_0$ is called a length function on $\mathcal{A}$ if $\lambda(J)<\lambda(I)$ for all $I,J\in\mathcal{A}$ with $I\subsetneq J$. Moreover, $H$ is called a {\it BF-monoid} if the set of nontrivial principal ideals of $H$ possesses a length function.

\begin{proposition}\label{Prop 4.7} Let $H$ be a radical factorial monoid, $k\in\mathbb{N}$, $I\in\mathcal{I}_t^*(H)$ such that $(I^k)_t$ is principal and $\mathcal{A}=\{(L^k)_t\mid L\in\mathcal{I}_t^*(H),I\subseteq L,(L^k)_t$ is principal$\}$.
\begin{enumerate}
\item[(1)] If $\mathcal{A}$ possesses a length function, then $I$ is principal.
\item[(2)] If $\{P\in\mathfrak{X}(H)\mid I\subseteq P\}$ is finite, then $I$ is principal.
\item[(3)] If $H$ is a BF-monoid, then $\mathcal{C}_t(H)$ is torsionfree.
\end{enumerate}
\end{proposition}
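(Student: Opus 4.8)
The plan is to prove (1) by a recursive enlargement argument and then deduce (2) and (3) from it. Throughout we may assume $H\neq G$, so that $t$-invertible $t$-ideals are nontrivial. Since $H$ is radical factorial, so is $H_M$ for every $M\in t$-$\max(H)$, so Proposition~\ref{Prop 3.1} applies with $r=t$: $\bigcap_{P\in\mathfrak X(H)}H_P=H$, each $H_P$ with $P\in\mathfrak X(H)$ is a DVM, and $\mathcal P(J)\subseteq\mathfrak X(H)$ for every $t$-invertible $t$-ideal $J$. In particular every such $J$ is divisorial, hence $J=\bigcap_{P\in\mathfrak X(H)}J_P$, and is completely determined by the exponents $v_P(J)\in\mathbb N_0$ defined by $J_P=P_P^{v_P(J)}$, with $v_P(J)\ge 1$ precisely on $\mathcal P(J)$; moreover $\mathcal P(J)=\mathcal P((J^n)_t)$ for all $n$, containments of $t$-invertible $t$-ideals are read off exponentwise, and a radical element $c$ has $v_P(c)\le 1$ for all $P$ (since $cH_P$ is a radical ideal of the DVM $H_P$). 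Note also that an integral $t$-invertible $N$ with $(N^k)_t=H$ satisfies $H=(N^k)_t\subseteq N\subseteq H$, so $N=H$.

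\emph{Proof of (1).} Suppose $I$ is not principal. I will build recursively integral $t$-invertible $t$-ideals $I=L_1\subsetneq L_2\subsetneq\cdots$ and radical nonunits $y_1,y_2,\dots\in H$ with $L_{n+1}=(y_n^{-1}L_n)_t$ and every $(L_n^k)_t$ principal (so that by induction $L_n=((y_1\cdots y_{n-1})^{-1}I)_t$, empty product $=1$). Given $L_n$ with $(L_n^k)_t=a_nH$ principal: if $a_n$ were a unit then $(L_n^k)_t=H$, forcing $L_n=H$ and hence $I=(y_1\cdots y_{n-1})H$ principal, a contradiction; so $a_n$ is a nonunit. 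By radical factoriality write $a_nH=\prod_{i=1}^m c_iH$ with the $c_i$ radical nonunits, and put $y_n:=c_1$. Since $y_n\mid a_n$ and $\mathcal P(L_n)=\mathcal P(a_nH)$, for every $P\in\mathfrak X(H)$ we get $v_P(y_n)\le\min\{1,v_P(a_n)\}=\min\{1,k\,v_P(L_n)\}\le v_P(L_n)$; hence $L_n\subseteq y_nH$, so $L_{n+1}:=(y_n^{-1}L_n)_t$ is integral, $t$-invertible and contains $L_n$, properly because $y_n$ is a nonunit. Finally $v_P(y_n^k)=k\,v_P(y_n)\le k\,v_P(L_n)=v_P(a_n)$, so $y_n^k\mid a_n$ and $(L_{n+1}^k)_t=y_n^{-k}a_nH=(a_n/y_n^k)H$ is principal, with $a_n/y_n^k$ a proper divisor of $a_n$. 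Thus every $(L_n^k)_t$ lies in $\mathcal A$ and $(L_1^k)_t\subsetneq(L_2^k)_t\subsetneq\cdots$ is an infinite strictly ascending chain in $\mathcal A$; the length function would then give a strictly decreasing sequence in $\mathbb N_0$, which is absurd. Hence $I$ is principal.

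\emph{Proof of (2) and (3).} For (2): any $L\in\mathcal I_t^*(H)$ with $I\subseteq L$ has $v_P(L)\le v_P(I)$ for all $P\in\mathfrak X(H)$ and $v_P(L)=0$ off $\mathcal P(I)$; since $\mathcal P(I)=\{P\in\mathfrak X(H)\mid I\subseteq P\}$ is finite and $L$ is determined by $(v_P(L))_{P\in\mathcal P(I)}$, there are only finitely many such $L$, so $\mathcal A$ is finite. A finite poset admits a length function (e.g. $\lambda(J)$ is the length of a longest chain of $\mathcal A$ with least element $J$), so (1) yields that $I$ is principal. For (3): recall that $\mathcal C_t(H)$ is torsionfree iff every $I\in\mathcal I_t^*(H)$ with $(I^k)_t$ principal for some $k\in\mathbb N$ is principal; given such $I$ and $k$, every element of $\mathcal A$ is a nontrivial principal ideal of $H$, so $\mathcal A$ is a subset of the set of nontrivial principal ideals of $H$, which carries a length function since $H$ is a BF-monoid, and the restriction of that function to $\mathcal A$ is a length function — so (1) applies.

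\emph{Main obstacle.} The delicate point is the recursion in (1): one must guarantee that each step strictly enlarges $L_n$ while keeping $(L_n^k)_t$ principal and $L_{n+1}$ integral. This rests entirely on the valuation bookkeeping at the height-one primes — that radical elements contribute valuation at most one, that $\mathcal P(L_n)=\mathcal P((L_n^k)_t)\subseteq\mathfrak X(H)$, and that $t$-invertible $t$-ideals, being divisorial, are recovered from their localizations at $\mathfrak X(H)$ — so the effort goes into these comparisons rather than into any single isolated trick.
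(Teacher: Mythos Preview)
Your argument is correct and, for parts (1) and (3), essentially coincides with the paper's: the paper proves (1) by induction on $\lambda((L^k)_t)$, peeling off a radical nonunit $x$ with $L\subseteq\sqrt{L}=\sqrt{(L^k)_t}\subseteq xH$ to pass to $J=x^{-1}L$, which is the same step as your $L_n\mapsto L_{n+1}=(y_n^{-1}L_n)_t$; you run it as an infinite ascent (contrapositive) and justify $L_n\subseteq y_nH$ via valuations at $\mathfrak X(H)$, whereas the paper gets this containment in one line from $L\subseteq\sqrt{(L^k)_t}\subseteq xH$. Part (3) is identical to the paper's.

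Your treatment of (2) is a genuine simplification. The paper builds a specific length function by counting, for each principal ideal in $\mathcal A$, the finite $t$-products of height-one primes containing it, and then checks monotonicity by hand. You instead observe that any $L\in\mathcal I_t^*(H)$ with $I\subseteq L$ is determined by the finite tuple $(v_P(L))_{P\in\mathcal P(I)}$ with $0\le v_P(L)\le v_P(I)$, so there are only finitely many such $L$, hence $\mathcal A$ is finite and trivially carries a length function. This avoids the auxiliary set $\mathcal P$ of $t$-products of height-one primes entirely; the paper's route, on the other hand, never needs the statement that a $t$-invertible $t$-ideal is \emph{determined} by its localizations at $\mathfrak X(H)$ (only that it is contained in their intersection), so it is slightly less dependent on the divisoriality bookkeeping you set up at the start.
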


\begin{proof} (1) Let $\lambda:\mathcal{A}\rightarrow\mathbb{N}_0$ be a length function on $\mathcal{A}$. It is sufficient to show by induction that for each $n\in\mathbb{N}_0$ and $L\in\mathcal{I}_t^*(H)$ such that $I\subseteq L$, $(L^k)_t$ is principal and $\lambda((L^k)_t)=n$, it follows that $L$ is principal. Let $n\in\mathbb{N}_0$ and $L\in\mathcal{I}_t^*(H)$ be such that $I\subseteq L$, $(L^k)_t$ is principal and $\lambda((L^k)_t)=n$. Without restriction let $L\not=H$. There is some radical nonunit $x\in H$ such that $L\subseteq\sqrt{L}=\sqrt{(L^k)_t}\subseteq xH$. Consequently, $L=xJ$ for some $J\in\mathcal{I}_t^*(H)$. Note that $(J^k)_t$ is principal, $I\subseteq J$ and $(L^k)_t\subsetneq (J^k)_t$. We infer that $\lambda((J^k)_t)<n$, and hence $J$ is principal by the induction hypothesis. This implies that $L$ is principal.

(2) Let $\{P\in\mathfrak{X}(H)\mid I\subseteq P\}$ be finite. Let $\mathcal{P}$ be the set of all finite $t$-products (which are not necessarily squarefree or nonempty) of elements of $\mathfrak{X}(H)$. Since $H_Q$ is a DVM for each $Q\in\mathfrak{X}(H)$ by Proposition~\ref{Prop 3.1}, we infer that $\{C\in\mathcal{P}\mid (I^k)_t\subseteq C\}$ is finite. Let $\lambda:\mathcal{A}\rightarrow\mathbb{N}_0$ be defined by $\lambda(L)=|\{C\in\mathcal{P}\mid L\subseteq C\}|$ for each $L\in\mathcal{A}$. Now let $A,B\in\mathcal{A}$ be such that $A\subsetneq B$. There exist $x,y\in H$ and some nonunit $z\in H$ such that $A=xH$, $B=yH$ and $x=yz$. Since $H$ satisfies the Principal Ideal Theorem by Proposition~\ref{Prop 3.1}, there is some $Q\in\mathfrak{X}(H)$ such that $z\in Q$. Moreover, there is some minimal $J\in\mathcal{P}$ such that $yH\subseteq J$. We have that $A=xH\subseteq (JQ)_t\in\mathcal{P}$ and $B=yH\nsubseteq (JQ)_t\subsetneq J$ (note that $H_Q$ is a DVM). Therefore, $\lambda(B)<\lambda(A)$, and thus $\lambda$ is a length function. The statement now follows by (1).

(3) Let $H$ be a BF-monoid, $\ell\in\mathbb{N}$ and $L\in\mathcal{I}_t^*(H)$ such that $(L^{\ell})_t$ is principal. Set $\mathcal{B}=\{(J^{\ell})_t\mid J\in\mathcal{I}_t^*(H),L\subseteq J,(J^{\ell})_t$ is principal$\}$. Since $\mathcal{B}$ is a subset of the set of nontrivial principal ideals of $H$, we have that $\mathcal{B}$ possesses a length function. Therefore, $L$ is principal by (1). We infer that $\mathcal{C}_t(H)$ is torsionfree.
\end{proof}

\section{On the monoid of $r$-invertible $r$-ideals}

In this section, we put our focus on the monoid of $r$-invertible $r$-ideals and give characterizations for this monoid to be radical factorial or to have the property that the radical of every principal ideal is principal. We also present a characterization for radical factorial monoids and discuss the connections between the monoid of $r$-invertible $r$-ideals and radical $r$-factorization of principal ideals and $r$-invertible $r$-ideals.

\begin{lemma}\label{Lem 5.1} Let $r$ be a finitary ideal system on $H$ and $I,J\in\mathcal{I}_r^*(H)$.
\begin{itemize}
\item[(1)] $I$ divides $J$ in $\mathcal{I}_r^*(H)$ if and only if $J\subseteq I$.
\item[(2)] $I$ is radical if and only if $I$ is a radical element of $\mathcal{I}_r^*(H)$.
\end{itemize}
\end{lemma}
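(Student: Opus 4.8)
The plan is to deduce (2) from (1), so the substance lies in part (1), where the forward implication is immediate and the reverse needs the ``quotient'' $r$-ideal $(I^{-1}J)_r$.

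\emph{Part (1).} If $I$ divides $J$ in $\mathcal I_r^*(H)$, say $J=(IL)_r$ with $L\in\mathcal I_r^*(H)$, then $IL\subseteq IH$ and hence $J=(IL)_r\subseteq(IH)_r=I$; no invertibility is used here. For the converse, suppose $J\subseteq I$ and put $L:=(I^{-1}J)_r$. I would check three things in turn. First, $L\subseteq H$, because $J\subseteq I$ gives $I^{-1}J\subseteq I^{-1}I\subseteq H$, so $L$ is genuinely an $r$-ideal of $H$. Second, $(IL)_r=J$, using $(II^{-1})_r=H$ together with the standard identities $(X\,Y_r)_r=(XY)_r=(X_r\,Y)_r$ for ideal systems: $(IL)_r=(I\,I^{-1}J)_r=\big((II^{-1})_r\,J\big)_r=(HJ)_r=J$. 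Third, $L$ is $r$-invertible: from $(IL)_r=J$ and the $r$-invertibility of $J$ we get $H=(JJ^{-1})_r=\big((IL)\,J^{-1}\big)_r=\big(L\,(IJ^{-1})\big)_r$ with $L\,(IJ^{-1})\subseteq JJ^{-1}\subseteq H$, so $(LL^{-1})_r=H$. Thus $L\in\mathcal I_r^*(H)$, and $(IL)_r=J$ exhibits $I$ as a divisor of $J$.

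\emph{Part (2).} Every element of $\mathcal I_r^*(H)$ is cancellative, so this monoid has no zero element, and an element $I$ of it is radical exactly when its principal ideal equals its own radical. Using (1), this unwinds to the condition: for all $J\in\mathcal I_r^*(H)$ and all $n\in\mathbb N$, $(J^n)_r\subseteq I$ implies $J\subseteq I$, where $(J^n)_r$ is the $n$-th monoid power of $J$. If $I$ is a radical $r$-ideal and $(J^n)_r\subseteq I$, then every $x\in J$ satisfies $x^n\in(J^n)_r\subseteq I$, so $x\in\sqrt I=I$; hence $J\subseteq I$ and $I$ is a radical element. Conversely, let $I$ be a radical element and $x\in\sqrt I$, say $x^n\in I$. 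If $x\in z(H)$ then $x\in I$ since $z(H)\subseteq I$; if $x\in H^\times$ then $1\in I$, so $I=H\ni x$; otherwise $J:=xH\in\mathcal I_r^*(H)$ is a nontrivial principal ideal with $(J^n)_r=x^nH\subseteq I$, so $J\subseteq I$ and thus $x\in I$. Hence $\sqrt I\subseteq I$, so $I$ is a radical $r$-ideal.

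The only step calling for care is the manipulation in the last two verifications of part (1): the identities $cX_r=(cX)_r$ and $(XY_r)_r=(XY)_r$ are being applied with one factor a subset of $G$ rather than of $H$. This is legitimate precisely when the resulting product lies in $H$, which is guaranteed here by $J\subseteq I$. These are routine facts about finitary ideal systems, for which I would simply cite \cite{HA}; the rest of the argument is formal.
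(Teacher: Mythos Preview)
Your proof is correct and follows essentially the same approach as the paper's. In (1) the paper likewise takes the quotient $B=(JI^{-1})_r$ (stating without further detail that it is an $r$-invertible $r$-ideal with $J=(BI)_r$), and in (2) the paper also reduces via (1) to principal ideals $xH$; your version is just more explicit about the fractional-ideal manipulations and the edge cases, which is fine.
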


\begin{proof} (1) Let $J$ be an $r$-invertible $r$-ideal of $H$. If $I$ divides $J$ in $\mathcal{I}_r^*(H)$, then $J=(IA)_r$ for some $r$-invertible $r$-ideal $A$ of $H$, and thus $J=(IA)_r\subseteq (IH)_r=I$. Conversely, if $J\subseteq I$, then $B=(JI^{-1})_r$ is an $r$-invertible $r$-ideal of $H$ and $J=(BI)_r$, and hence $I$ divides $J$ in $\mathcal{I}_r^*(H)$.

(2) First let $I$ be radical, $J\in\mathcal{I}_r^*(H)$ and $k\in\mathbb{N}$ such that $I$ divides $(J^k)_r$ in $\mathcal{I}_r^*(H)$. We have that $(J^k)_r\subseteq I$ by (1), and hence $J\subseteq\sqrt{J}=\sqrt{(J^k)_r}\subseteq I$. Therefore, $I$ divides $J$ in $\mathcal{I}_r^*(H)$ by (1).

Conversely, let $I$ be a radical element of $\mathcal{I}_r^*(H)$. Let $x\in\sqrt{I}$ be nonzero. There is some $k\in\mathbb{N}$ such that $(xH)^k\subseteq I$. Then $I$ divides $(xH)^k$ in $\mathcal{I}_r^*(H)$ by (1), and thus $I$ divides $xH$ in $\mathcal{I}_r^*(H)$. We infer that $x\in xH\subseteq I$ by (1).
\end{proof}

Let $r$ be a finitary ideal system on $H$. Next we present some (technical) characterizations of radical factorial monoids and monoids whose $r$-invertible $r$-ideals are finite $r$-products of radical $r$-ideals. Let $\Omega$ be a finite set of $r$-ideals of $H$ and $I$ an $r$-ideal of $H$. For each $P\in\mathfrak{X}(H)$ let $k_P$ be the number of elements of $\Omega$ which are contained in $P$. Then $\Omega$ is called {\it $(r,I)$-meager} if for each $P\in\mathfrak{X}(H)$ we have that $I\subseteq (P^{k_P})_r$.

\begin{proposition}\label{Prop 5.2} Let $r$ be a finitary ideal system on $H$.
\begin{itemize}
\item[(A)] The following are equivalent:
\begin{itemize}
\item[(1)] $\mathcal{I}_r^*(H)$ is radical factorial.
\item[(2)] Each $I\in\mathcal{I}_r^*(H)$ is a finite $r$-product of radical $r$-ideals of $H$.
\item[(3)] $\bigcap_{P\in\mathfrak X(H)} H_P=H$, $H_Q$ is a DVM for all $Q\in\mathfrak X(H)$ and for each $I\in\mathcal{I}_r^*(H)$, $\sqrt{I}=\bigcap_{J\in\Omega} J$ for some $(r,I)$-meager set $\Omega\subseteq\mathcal{I}_r^*(H)$.
\end{itemize}
\item[(B)] $H$ is radical factorial if and only if $\bigcap_{P\in\mathfrak X(H)} H_P=H$, $H_Q$ is a DVM for each $Q\in\mathfrak X(H)$ and for each $x\in H$, $\sqrt{xH}=\bigcap_{J\in\Omega} J$ for some $(t,xH)$-meager set $\Omega$ of principal ideals of $H$.
\end{itemize}
\end{proposition}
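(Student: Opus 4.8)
The plan is to run the cycle $(1)\Rightarrow(2)\Rightarrow(3)\Rightarrow(1)$ in part (A), to prove both implications in part (B), and to let the two parts share one inductive mechanism. I would begin with the purely formal equivalence $(1)\Leftrightarrow(2)$ in (A). Write $\mathcal{M}=\mathcal{I}_r^*(H)$. By Lemma~\ref{Lem 5.1}(1) the only unit of $\mathcal{M}$ is $H$ (if $u\mathcal{M}=\mathcal{M}$ then $u$ divides $H$, so $H\subseteq u\subseteq H$), so $\mathcal{M}$ is reduced and a principal ideal $I\mathcal{M}$ is a finite product of radical principal ideals of $\mathcal{M}$ precisely when $I=(R_1\cdots R_n)_r$ for radical elements $R_i$ of $\mathcal{M}$, which by Lemma~\ref{Lem 5.1}(2) are exactly the radical $r$-invertible $r$-ideals of $H$. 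This yields $(1)\Rightarrow(2)$ immediately, and the converse only needs the observation that a factor of an $r$-invertible $r$-ideal is again $r$-invertible, so any factorization of $I\in\mathcal{M}$ into radical $r$-ideals is automatically a factorization into radical elements of $\mathcal{M}$.

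Next, $(2)\Rightarrow(3)$. Every nontrivial principal ideal of $H$ belongs to $\mathcal{I}_r^*(H)$, hence by $(2)$ is a finite $r$-product of radical $r$-ideals; by the remark preceding Proposition~\ref{Prop 3.1} this makes $H_M$ radical factorial for each $M\in r$-$\max(H)$, and then Proposition~\ref{Prop 3.1} gives $\bigcap_{P\in\mathfrak X(H)}H_P=H$, that each $H_Q$ ($Q\in\mathfrak X(H)$) is a DVM, and $\mathcal{P}(I)\subseteq\mathfrak X(H)$ for every $I\in\mathcal{I}_r^*(H)$. Given $I\in\mathcal{I}_r^*(H)$, write $I=(R_1\cdots R_n)_r$ with the $R_i$ radical and put $\Omega=\{R_1,\dots,R_n\}$; then $\sqrt I=\bigcap_i R_i=\bigcap_{J\in\Omega}J$, and localizing at the DVM $H_P$ (where valuations add) gives $v_P(I)=|\{i:R_i\subseteq P\}|\ge k_P:=|\{J\in\Omega:J\subseteq P\}|$ for each $P\in\mathfrak X(H)$, a set count being at most the corresponding count with multiplicity; thus $I\subseteq(P^{k_P})_r$ and $\Omega$ is $(r,I)$-meager. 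The direction ``$\Rightarrow$'' of (B) is the same computation with $t$ in place of $r$, principal ideals in place of $r$-invertible $r$-ideals, and $\Omega$ the set of radical principal factors of $xH$.

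The real work is $(3)\Rightarrow(1)$ in (A) and ``$\Leftarrow$'' in (B). I would first use the argument in the proof of Proposition~\ref{Prop 4.1}: the map $X\mapsto\bigcap_{P\in\mathfrak X(H)}(X_s)_P$ is an ideal system coarser than $v$, so every divisorial ideal --- in particular every $r$-invertible $r$-ideal, every principal ideal, and every finite intersection of such --- equals the intersection of its localizations at $\mathfrak X(H)$ and is determined by the valuations $(v_P)_{P\in\mathfrak X(H)}$ coming from the DVMs $H_P$. Using this together with the meagerness hypothesis one checks that for $I\in\mathcal{I}_r^*(H)$ the radical $\sqrt I$ has only finitely many minimal primes, all lying in $\mathfrak X(H)$. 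Then I would induct on the excess $\sum_{P\in\mathfrak X(H)}\max(0,v_P(I)-1)$: if it is $0$ then $I$ is radical (check locally) and there is nothing to do; otherwise one peels off the radical $r$-ideal $\sqrt I$ by writing $I=(\sqrt I\cdot I')_r$ with $I'=(I(\sqrt I)^{-1})_r$, an $r$-invertible $r$-ideal of strictly smaller excess, and similarly for $xH$ in (B).

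The key obstacle --- and the precise place where the meager-set hypothesis must be invoked --- is to show that $\sqrt I$ is itself $r$-invertible (respectively that $\sqrt{xH}$ is principal), so that the peeling step is legitimate: intersections of $r$-invertible $r$-ideals need not be $r$-invertible in general, so the decomposition $\sqrt I=\bigcap_{J\in\Omega}J$ with $\Omega$ an $(r,I)$-meager set is exactly what must be exploited. I expect this to go by first upgrading the hypotheses to $r$-$\max(H)=\mathfrak X(H)$ (using $\bigcap_{P\in\mathfrak X(H)}H_P=H$ and the one-dimensionality of the $H_P$ to rule out higher-height maximal $r$-ideals) and then reading off invertibility, resp. principality, of the radical from its meager decomposition. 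Once $(3)\Rightarrow(1)$ is available in (A), one can alternatively recover $(A)(1)\Leftrightarrow(3)$ more conceptually by applying part (B) to the reduced monoid $\mathcal{I}_r^*(H)$ and translating through Lemma~\ref{Lem 5.1}, since over a DVM the monoid of invertible ideals is a copy of $\mathbb{N}_0$, which is itself a DVM.
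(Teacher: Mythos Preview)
Your treatment of $(1)\Leftrightarrow(2)$ and $(2)\Rightarrow(3)$ in (A), and of the forward direction in (B), is correct and matches the paper. The gaps are in $(3)\Rightarrow(1)$ and the backward direction of (B).

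First, your induction variable $\sum_{P\in\mathfrak X(H)}\max(0,v_P(I)-1)$ need not be finite, and the supporting claim that $\sqrt I$ has only finitely many minimal primes is false under hypothesis~(3). Already in an almost Dedekind domain that is not Dedekind a nonzero principal ideal can lie in infinitely many height-one primes; if $x=u^2$ with $u$ a radical element contained in infinitely many maximal ideals, your excess is infinite. Nothing in~(3) prevents this.

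Second, your plan to peel off $\sqrt I$ needs $\sqrt I\in\mathcal I_r^*(H)$, and the route you propose --- first establishing $r$-$\max(H)=\mathfrak X(H)$ --- fails: condition~(3) does \emph{not} force $r$-maximal ideals to have height one. The example recorded at the end of Section~3 (a $t$-local $t$-SP-monoid with $t$-$\dim(H)=2$) satisfies (A)(2), hence (A)(3), yet its unique $t$-maximal ideal is not in $\mathfrak X(H)$. So there is no reason to expect $\sqrt I$ to be $r$-invertible, and the peeling step as you describe it is not justified.

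The paper sidesteps both issues by peeling off $(\prod_{J\in\Omega}J)_r$ instead of $\sqrt I$; this ideal is $r$-invertible automatically, being an $r$-product of elements of $\mathcal I_r^*(H)$. The meager hypothesis is then used twice: first to show each $J\in\Omega$ is in fact radical (compare $J_P$ with $(\sqrt J)_P$ for every $P\in\mathfrak X(H)$ and invoke divisoriality), and second to show $I\subseteq(\prod_{J\in\Omega}J)_r$ (locally $I_P\subseteq P_P^{k_P}=\prod_{J\in\Omega}J_P$, then again divisoriality). For the induction variable the paper uses $m_I=\max\{k\in\mathbb N_0: I\subseteq(P^k)_r\text{ for some }P\in\mathfrak X(H)\}$ and proves it is finite from the meager data: since $\sqrt I=\sqrt{(\prod_{J\in\Omega}J)_r}$ and the latter product is $r$-finitely generated, $(\prod_{J\in\Omega}J^k)_r\subseteq I$ for some $k$, and then $I\subseteq(P^m)_r$ with $m>k|\Omega|$ would force $P_P^{\,k|\Omega|}\subseteq P_P^{\,m}$ in the DVM $H_P$, a contradiction. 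Writing $I=(L\prod_{J\in\Omega}J)_r$ one checks $m_L<m_I$, which closes the induction. Part~(B) is the same argument with $t$ and principal ideals in place of $r$ and $r$-invertible $r$-ideals.
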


\begin{proof} Observe that if $\bigcap_{P\in\mathfrak X(H)} H_P=H$, then $g:\mathbb{P}(H)\rightarrow\mathbb{P}(H)$ defined by $X_g=\bigcap_{P\in\mathfrak X(H)} (X_s)_P$ for each $X\subseteq H$ is an ideal system on $H$. In particular, if $\bigcap_{P\in\mathfrak X(H)} H_P=H$, then $I=\bigcap_{P\in\mathfrak X(H)} I_P$ for each divisorial ideal $I$ of $H$.

(A) (1) $\Leftrightarrow$ (2): Let $I$ be an $r$-invertible $r$-ideal of $H$. By Lemma~\ref{Lem 5.1}(1) we have that $I$ is a finite $r$-product of radical $r$-ideals of $H$ if and only if $I$ is a finite $r$-product of $r$-invertible radical $r$-ideals of $H$ if and only if $I$ is a finite product of radical elements of $\mathcal{I}_r^*(H)$. Now the statement follows easily.

(2) $\Rightarrow$ (3): We infer by Proposition~\ref{Prop 3.1} that $\bigcap_{P\in\mathfrak X(H)} H_P=H$ and $H_Q$ is a DVM for all $Q\in\mathfrak X(H)$. Let $I$ be an $r$-invertible $r$-ideal of $H$. Then $I=(\prod_{i=1}^n I_i)_r$ for some $n\in\mathbb{N}$ and finitely many radical $r$-ideals $I_i$ of $H$. Set $\Omega=\{I_i\mid i\in [1,n]\}$. Clearly, $\Omega$ is a finite set of $r$-invertible $r$-ideals of $H$ and $\sqrt{I}=\bigcap_{i=1}^n I_i=\bigcap_{J\in\Omega} J$. Let $P\in\mathfrak X(H)$ and set $k=|\{J\in\Omega\mid J\subseteq P\}|$. Then $I\subseteq (\prod_{J\in\Omega} J)_r\subseteq (P^k)_r$, and hence $\Omega$ is an $(r,I)$-meager set.

\smallskip
(3) $\Rightarrow$ (2): \textsc{Claim.} For each $r$-invertible $r$-ideal $I$ of $H$ there is some $m\in\mathbb{N}_0$ such that $I\nsubseteq (P^m)_r$ for all $P\in\mathfrak{X}(H)$.

\smallskip
Let $I$ be an $r$-invertible $r$-ideal of $H$. Then there is some $(r,I)$-meager set $\Omega$ of $r$-invertible $r$-ideals of $H$ such that $\sqrt{I}=\bigcap_{J\in\Omega} J$. First we show that each element of $\Omega$ is a radical $r$-ideal of $H$. Let $J\in\Omega$ and $P\in\mathfrak X(H)$.

Case 1: $\sqrt{J}\nsubseteq P$. We have that $J\nsubseteq P$, and hence $(\sqrt{J})_P=J_P$.

Case 2: $\sqrt{J}\subseteq P$. Then $\sqrt{I}\subseteq P$, and thus $P_P=(\sqrt{I})_P\subseteq J_P\subseteq (\sqrt{J})_P\subseteq P_P$. Therefore, $(\sqrt{J})_P=J_P$.

Since $J$ is $r$-invertible, $J$ is divisorial, and since $\sqrt{J}$ is an intersection of $r$-invertible $r$-ideals of $H$, $\sqrt{J}$ is divisorial. Consequently, $\sqrt{J}=\bigcap_{Q\in\mathfrak{X}(H)} (\sqrt{J})_Q=\bigcap_{Q\in\mathfrak{X}(H)} J_Q=J$.

Note that $\sqrt{I}=\sqrt{\bigcap_{J\in\Omega} J}=\sqrt{(\prod_{J\in\Omega} J)_r}$, and since $(\prod_{J\in\Omega} J)_r$ is $r$-finitely generated, there is some $k\in\mathbb{N}$ such that $(\prod_{J\in\Omega} J^k)_r\subseteq I$. Set $\ell=|\{J\in\Omega\mid J\subseteq P\}|$ and $m=1+k|\Omega|$. Assume that $I\subseteq (P^m)_r$. Then $(\prod_{J\in\Omega} J^k)_r\subseteq (P^m)_r$, and hence $P_P^{k\ell}=((\prod_{J\in\Omega} J^k)_r)_P\subseteq ((P^m)_r)_P=P_P^m$. Since $k\ell<m$ this contradicts the fact that $P_P$ is a nontrivial proper principal ideal of $H_P$.\qed(Claim)

\smallskip
For $A\in\mathcal{I}_r^*(H)$, we set $m_A=\max\{k\in\mathbb{N}_0\mid A\subseteq (P^k)_r$ for some $P\in\mathfrak{X}(H)\}$ (which exists by the claim). It is sufficient to show by induction that for all $m\in\mathbb{N}_0$ and $I\in\mathcal{I}_r^*(H)$ with $m_I=m$, that $I$ is a finite $r$-product of radical $r$-ideals of $H$.

Let $m\in\mathbb{N}_0$ and $I\in\mathcal{I}_r^*(H)$ be such that $m_I=m$. If $m=0$, then since $I$ is divisorial, $I=\bigcap_{P\in\mathfrak X(H)} I_P=\bigcap_{P\in\mathfrak X(H)} H_P=H$ and we are done. Now let $m>0$. There is some $(r,I)$-meager set $\Omega\subseteq\mathcal{I}_r^*(H)$ such that $\sqrt{I}=\bigcap_{J\in\Omega} J$. As in the proof of the claim, it follows that each element of $\Omega$ is a radical $r$-ideal of $H$.

Let $P\in\mathfrak X(H)$ and set $\ell=|\{J\in\Omega\mid J\subseteq P\}|$. Then $((\prod_{J\in\Omega} J)_r)_P=\prod_{J\in\Omega} J_P=P_P^{\ell}=((P^{\ell})_r)_P\supseteq I_P$. Since $I$ and $(\prod_{J\in\Omega} J)_r$ are divisorial, we have that $I\subseteq (\prod_{J\in\Omega} J)_r$. We infer that $I=(L\prod_{J\in\Omega} J)_r$ for some $r$-invertible $r$-ideal $L$ of $H$. It is sufficient to show that $m_L<m$. Without restriction let $m_L>0$. There is some $Q\in\mathfrak{X}(H)$ such that $m_L=\max\{k\in\mathbb{N}_0\mid L\subseteq (Q^k)_r\}$. Since $m_L>0$, we have that $I\subseteq L\subseteq Q$, and thus $J\subseteq Q$ for some $J\in\Omega$. Since $I\subseteq (JL)_r\subseteq (Q^{m_L+1})_r$, we infer that $m_L<m_L+1\leq m$.

\smallskip
(B) This can be shown along the same lines as ``(A) (2) $\Leftrightarrow$ (A) (3)'', by replacing $r$ with $t$ and by replacing $r$-invertible $r$-ideals with nontrivial principal ideals.
\end{proof}

\begin{corollary}\label{Cor 5.3} Let $r$ be a finitary ideal system on $H$. Then $H$ is an $r$-almost Dedekind $r$-SP-monoid if and only if $H$ is an $r$-Pr\"ufer monoid and $\mathcal{I}_r^*(H)$ is radical factorial.
\end{corollary}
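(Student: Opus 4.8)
The plan is to obtain both implications directly from Theorem~\ref{Thm 3.9} and Proposition~\ref{Prop 5.2}; the case $H=G$ is trivial, so I would assume $H\ne G$ throughout.

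For the forward implication I would argue as follows. If $H$ is an $r$-almost Dedekind $r$-SP-monoid, then $H_M$ is a DVM, hence a valuation monoid, for every $M\in r$-$\max(H)$, so $H$ is an $r$-Pr\"ufer monoid. Being an $r$-SP-monoid, $H$ has the property that every $r$-ideal --- in particular every element of $\mathcal{I}_r^*(H)$ --- is a finite $r$-product of radical $r$-ideals of $H$; this is condition~(2) in Proposition~\ref{Prop 5.2}(A), and so $\mathcal{I}_r^*(H)$ is radical factorial.

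For the converse, assume $H$ is an $r$-Pr\"ufer monoid and $\mathcal{I}_r^*(H)$ is radical factorial. Then $H$ is $r$-treed, and by Proposition~\ref{Prop 5.2}(A) every element of $\mathcal{I}_r^*(H)$ is a finite $r$-product of radical $r$-ideals of $H$. I would then verify condition~(2) of Theorem~\ref{Thm 3.9}: given a nontrivial prime $r$-ideal $P$ of $H$, pick a nonzero $x\in P$, so that the nontrivial principal ideal $xH$ lies in $\mathcal{I}_r^*(H)$ and can be written as $xH=(I_1\cdots I_n)_r$ with each $I_i$ a radical $r$-ideal of $H$. Since $\prod_{i=1}^n I_i\subseteq xH\subseteq P$ and $P$ is prime, some $I_j$ is contained in $P$. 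Setting $A=(\prod_{i\ne j}I_i)_r$, we have $xH=(I_jA)_r$, and from $(I_jA(xH)^{-1})_r=(xH(xH)^{-1})_r=H$ it follows that $A(xH)^{-1}\subseteq(I_j)^{-1}$ and hence $(I_j(I_j)^{-1})_r=H$; thus $I_j$ is an $r$-invertible (hence nontrivial) radical $r$-ideal contained in $P$. Theorem~\ref{Thm 3.9}, (2)$\Rightarrow$(1), then gives that $H$ is an $r$-almost Dedekind $r$-SP-monoid.

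I do not expect a real obstacle. The only place where a citation does not suffice is the elementary observation used in the converse, namely that a divisor of an $r$-invertible $r$-ideal in the semigroup of $r$-ideals is again $r$-invertible; once this is in hand, condition~(2) of Theorem~\ref{Thm 3.9} follows from the radical factorization of $xH$ provided by Proposition~\ref{Prop 5.2}.
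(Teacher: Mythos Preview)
Your proposal is correct and follows essentially the same route as the paper: both directions rest on Theorem~\ref{Thm 3.9}(2) and Proposition~\ref{Prop 5.2}(A), using that $r$-almost Dedekind implies $r$-Pr\"ufer and that $r$-Pr\"ufer implies $r$-treed. The only difference is that you spell out explicitly why the radical factor $I_j$ of $xH$ is $r$-invertible, whereas the paper dismisses this as ``clearly every nontrivial prime $r$-ideal of $H$ contains an $r$-invertible radical $r$-ideal of $H$''; your computation is fine but could be shortened by noting that any $r$-factor of an $r$-invertible $r$-ideal is $r$-invertible (this is implicit in the proof of Proposition~\ref{Prop 5.2}(A), (1)$\Leftrightarrow$(2)).
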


\begin{proof} Note that every $r$-almost Dedekind monoid is an $r$-Pr\"ufer monoid and every $r$-Pr\"ufer monoid is $r$-treed. Moreover, if every $r$-invertible $r$-ideal of $H$ is a finite $r$-product of radical $r$-ideals of $H$, then clearly every nontrivial prime $r$-ideal of $H$ contains an $r$-invertible radical $r$-ideal of $H$. Therefore, the equivalence is an immediate consequence of Theorem~\ref{Thm 3.9} and Proposition~\ref{Prop 5.2}(A).
\end{proof}

\begin{proposition}\label{Prop 5.4} Let $r$ be a finitary ideal system on $H$. The following are equivalent:
\begin{itemize}
\item[(1)] Every principal ideal of $H$ is an $r$-product of finitely many pairwise comparable radical $r$-ideals of $H$.
\item[(2)] The radical of every nontrivial principal ideal of $H$ is $r$-invertible.
\item[(3)] The radical of every $r$-invertible $r$-ideal of $H$ is $r$-invertible.
\item[(4)] The radical of every principal ideal of $\mathcal{I}_r^*(H)$ is principal.
\item[(5)] Every $r$-invertible $r$-ideal of $H$ is an $r$-product of finitely many pairwise comparable radical $r$-ideals of $H$.
\end{itemize}
\end{proposition}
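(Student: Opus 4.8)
I would prove the five conditions equivalent by running the cycle $(1)\Rightarrow(2)\Rightarrow(3)\Rightarrow(4)\Rightarrow(5)\Rightarrow(1)$ (the implication $(4)\Rightarrow(3)$ will also drop out along the way). The case $H=G$ is degenerate, since then all five conditions hold vacuously, so I assume $H\neq G$. For $(1)\Rightarrow(2)$: given a nontrivial principal ideal written as $xH=(I_1\cdots I_n)_r$ with $I_1\subseteq\cdots\subseteq I_n$ radical $r$-ideals, the inclusions $I_1^n\subseteq I_1\cdots I_n\subseteq I_1$ and the fact that $I_1$ is radical force $\sqrt{xH}=\sqrt{I_1}=I_1$; and since $I_1$ is a divisor in $\mathcal I_r(H)$ of the $r$-invertible $r$-ideal $xH$, it is $r$-invertible. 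Hence $\sqrt{xH}=I_1$ is $r$-invertible.

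For $(2)\Rightarrow(3)$: condition $(2)$ gives, by localization, that for each $M\in r$-$\max(H)$ the radical of every principal ideal of the $r_M$-local monoid $H_M$ is principal, because $\sqrt{xH_M}=(\sqrt{xH})_M$ is $r_M$-invertible and $H_M$ is $r_M$-local (this is exactly how the corresponding remark at the start of Section~3 is justified). Now let $I$ be an $r$-invertible $r$-ideal of $H$. Then $I_M$ is a nontrivial principal ideal of $H_M$ for each $M$, so $(\sqrt I)_M=\sqrt{I_M}$ is principal for each $M$. The remaining step is to pass from this local information to the conclusion that $\sqrt I$ itself is $r$-invertible, i.e.\ that $(\sqrt I\,(\sqrt I)^{-1})_r=H$; this local-to-global passage is the technical heart of the argument and the place I expect the main difficulty, since one must control $\sqrt I$ well enough --- in effect, show that $\sqrt I$ is $r$-finitely generated, using Proposition~\ref{Prop 3.1} together with the constraints imposed by $(2)$ --- so that $(-)^{-1}$ commutes with localization at the members of $r$-$\max(H)$ and local principality transfers to global $r$-invertibility.

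For $(3)\Rightarrow(4)$ (and $(4)\Rightarrow(3)$): by Lemma~\ref{Lem 5.1}(1) the principal ideal of $\mathcal I_r^*(H)$ generated by $I$ is $\{K\in\mathcal I_r^*(H)\mid K\subseteq I\}$, and since every $r$-invertible $r$-ideal is $r$-finitely generated, a short pigeonhole argument identifies the radical of this ideal in the monoid $\mathcal I_r^*(H)$ with $\{K\in\mathcal I_r^*(H)\mid K\subseteq\sqrt I\}$. If $\sqrt I$ is $r$-invertible, this set equals $\sqrt I\cdot\mathcal I_r^*(H)$, hence is principal, giving $(3)\Rightarrow(4)$. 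Conversely, if it equals $J\cdot\mathcal I_r^*(H)$ for some $J\in\mathcal I_r^*(H)$, then $J\subseteq\sqrt I$; and since $\sqrt I$ is an $s$-ideal, $yH\subseteq\sqrt I$ for every $y\in\sqrt I$, whence $yH\subseteq J$ and $y\in J$, so $\sqrt I=J$ is $r$-invertible.

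For $(4)\Rightarrow(5)$ and $(5)\Rightarrow(1)$: I would apply Corollary~\ref{Cor 4.5}, in the form of the equivalence of its two purely monoid-theoretic conditions (3) and (4), to the monoid $\mathcal I_r^*(H)$ itself; condition $(4)$ above is precisely ``the radical of every principal ideal of $\mathcal I_r^*(H)$ is principal''. The resulting assertion that every principal ideal of $\mathcal I_r^*(H)$ is a product of finitely many pairwise comparable radical principal ideals translates, via Lemma~\ref{Lem 5.1} --- divisibility in $\mathcal I_r^*(H)$ is reverse inclusion, the radical elements of $\mathcal I_r^*(H)$ are the radical $r$-invertible $r$-ideals of $H$, and any divisor of an $r$-invertible $r$-ideal is again $r$-invertible --- into statement $(5)$. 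Finally $(5)\Rightarrow(1)$ is immediate, because every principal ideal of $H$ is an $r$-invertible $r$-ideal of $H$.
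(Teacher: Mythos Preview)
Your cycle and the individual steps $(1)\Rightarrow(2)$, $(3)\Rightarrow(4)$, $(4)\Rightarrow(5)$, $(5)\Rightarrow(1)$ match the paper's argument essentially verbatim, and your bonus direct proof of $(4)\Rightarrow(3)$ is correct and pleasant.

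The gap is in $(2)\Rightarrow(3)$, and you have recognized this yourself: you write that the local-to-global passage is ``the technical heart of the argument and the place I expect the main difficulty,'' and then gesture at Proposition~\ref{Prop 3.1} without actually carrying out the step. Proposition~\ref{Prop 3.1} alone is not enough: it gives you the height-one locus and that the $H_Q$ are DVMs, but it does not by itself produce the required finiteness of $\sqrt I$. The paper closes this gap by passing through the $t$-system. First it invokes the criterion that a nontrivial $r$-ideal $J$ is $r$-invertible if and only if $J_t$ is $t$-finitely generated and $J_M$ is principal for every $M\in r\text{-}\max(H)$. You have already handled the local principality of $\sqrt I$. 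For the $t$-finiteness, note that condition~(2) implies (since $r\leq t$) that the radical of every nontrivial principal ideal is $t$-invertible; by Theorem~\ref{Thm 4.2} this forces $H$ to be a $t$-almost Dedekind $t$-SP-monoid. Now $I$, being $r$-invertible, is $t$-invertible, and in a $t$-almost Dedekind $t$-SP-monoid the radical of every $t$-invertible $t$-ideal is again $t$-invertible (Theorem~\ref{Thm 3.9}(4) with $r=t$), hence $t$-finitely generated. That is precisely the missing piece. Your instinct that one ``must control $\sqrt I$ well enough'' was right, but the control comes from Theorem~\ref{Thm 4.2}, not from Proposition~\ref{Prop 3.1} directly.
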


\begin{proof} (1) $\Rightarrow$ (2): This is straightforward to prove.

(2) $\Rightarrow$ (3): Recall that a nontrivial $r$-ideal $J$ of $H$ is $r$-invertible if and only if $J_t$ is $t$-finitely generated and $J_M$ is principal for each $M\in r$-$\max(H)$. Let $I$ be an $r$-invertible $r$-ideal of $H$. We have to show that $\sqrt{I}$ is $t$-finitely generated and $(\sqrt{I})_M$ is principal for each $M\in r$-$\max(H)$. Clearly, the radical of every nontrivial principal ideal of $H$ is $t$-invertible, and hence $\sqrt{I}$ is $t$-invertible by Theorem~\ref{Thm 4.2}. Therefore, $\sqrt{I}$ is $t$-finitely generated. Let $M\in r$-$\max(H)$. Observe that the radical of every principal ideal of $H_M$ is principal, and thus $(\sqrt{I})_M=\sqrt[\uproot{3}H_M]{I_M}$ is principal.

(3) $\Rightarrow$ (4): Let $I$ be an $r$-invertible $r$-ideal of $H$. Set $J=\sqrt{I}$ and $\mathcal{I}=\mathcal{I}_r^*(H)$. It is sufficient to show that $\sqrt[\uproot{3}\mathcal{I}]{I\mathcal{I}}=J\mathcal{I}$. Since $I\subseteq J$, we infer by Lemma~\ref{Lem 5.1}(1) that $J$ divides $I$ in $\mathcal{I}$, and hence $I\mathcal{I}\subseteq J\mathcal{I}$. Since $J$ is a radical element of $\mathcal{I}$ by Lemma~\ref{Lem 5.1}(2), we have that $\sqrt[\uproot{3}\mathcal{I}]{I\mathcal{I}}\subseteq J\mathcal{I}$. Since $J$ is $r$-finitely generated, there is some $n\in\mathbb{N}$ such that $(J^n)_r\subseteq I$. Therefore, $I$ divides $(J^n)_r$ in $\mathcal{I}$ by Lemma~\ref{Lem 5.1}(1), and hence $(J\mathcal{I})^n=(J^n)_r\mathcal{I}\subseteq I\mathcal{I}$. This implies that $J\mathcal{I}\subseteq\sqrt[\uproot{3}\mathcal{I}]{I\mathcal{I}}$.

(4) $\Rightarrow$ (5): Let $I$ be an $r$-invertible $r$-ideal of $H$. Set $\mathcal{I}=\mathcal{I}_r^*(H)$. By Corollary~\ref{Cor 4.5}, there exist $n\in\mathbb{N}$ and finitely many radical elements $I_i$ of $\mathcal{I}$ such that $I\mathcal{I}=\prod_{i=1}^n I_i\mathcal{I}=(\prod_{i=1}^n I_i)_r\mathcal{I}$ and $I_i\mathcal{I}\subseteq I_{i+1}\mathcal{I}$ for all $i\in [1,n-1]$. This implies that $I=(\prod_{i=1}^n I_i)_r$. Let $i\in [1,n]$. It follows by Lemma~\ref{Lem 5.1}(2) that $I_i$ is a radical $r$-ideal of $H$. Furthermore, if $i\in [1,n-1]$, then $I_i\subseteq I_{i+1}$ by Lemma~\ref{Lem 5.1}(1).

(5) $\Rightarrow$ (1): This is obvious.
\end{proof}

\section{Monoid rings and $*$-Nagata rings}

\begin{center}
\textit{In this section let $H$ always be a monoid with $z(H)=\emptyset$.}
\end{center}

As an application, we study several ring-theoretical constructions in this section. Recall that the monoid $H$ is {\it completely integrally closed} if for all $x\in H$ and $y\in G$ with $xy^n\in H$ for all $n\in\mathbb{N}$, it follows that $y\in H$. Moreover, $H$ is called {\it root-closed} if for all $x\in G$ and $n\in\mathbb{N}$ with $x^n\in H$, we have that $x\in H$. We say that $H$ is a {\it grading monoid} if $H$ is torsionless (i.e., for all $x,y\in H$ and $n\in\mathbb{N}$ such that $x^n=y^n$ it follows that $x=y$). If not stated otherwise, we will write a grading monoid additively (from now on). Note that $H$ is a grading monoid if and only if we can define a total order on it which is compatible to the monoid operation (\cite[page 123]{N}). Moreover, a nontrivial Abelian group is a grading monoid if and only if it is torsionfree. Let $R$ be an integral domain, $H$ a grading monoid, $K$ be a field of quotients of $R$ and $G$ a quotient group of $H$. A sequence $(x_g)_{g\in I}$ of elements of $K$ is called {\it formally infinite} if all but finitely many elements of that sequence are zero.

\smallskip
By $R[H]=R[X;H]=\{\sum_{g\in H} x_gX^g\mid (x_g)_{g\in H}\in R^H$ is formally infinite$\}$ we denote the monoid ring over $R$ and $H$. It is well-known that $R[H]$ is an integral domain. Note that $R[H]$ is integrally closed if and only if $R$ is integrally closed and $H$ is root-closed (\cite[Theorem 3.7(d)]{DFA}). Furthermore, $R[H]$ is completely integrally closed if and only if $R$ and $H$ are completely integrally closed (\cite[Theorem 3.7(e)]{DFA}). If $B\subseteq K$ and $Y\subseteq G$, then set $B[Y]=\{\sum_{g\in Y} x_gX^g\mid (x_g)_{g\in Y}\in B^Y$ is formally infinite$\}$. Let $S=\{yX^g\mid y\in R\setminus\{0\},g\in H\}$ denote the set of nonzero homogeneous elements of $R[H]$. Then $S^{-1}(R[H])=K[G]$ is called the homogeneous field of quotients of $R[H]$. It is well-known that $K[G]$ is a completely integrally closed $t$-B\'ezout domain (\cite[Theorem 2.2]{DFA}). An ideal $A$ of $R[H]$ is called homogeneous if for all formally infinite $(x_g)_{g\in H}\in R^H$ such that $\sum_{g\in H} x_gX^g\in A$ we have that $x_gX^g\in A$ for all $g\in H$ (equivalently, $A$ is generated by homogeneous elements of $R[H]$). Let $I$ be an ideal of $R$ and let $Y$ be an $s$-ideal of $H$. Then $I[Y]$ is a homogeneous ideal of $R[H]$. Also note that if $J$ is an ideal of $R$ and $Z$ is an $s$-ideal of $H$, then $I[Y]J[Z]=(IJ)[Y+Z]$.

\smallskip
Finally, note that if $R$ is an integral domain, then the $t$-system on $R$ and the ``classical'' $t$-operation on $R$ coincide for nonzero ideals of $R$. More precisely, the $t$-system on $R$ extends the $t$-operation on $R$ to arbitrary subsets of $R$. For this reason, we do not have to distinguish between the ring theoretical and the monoid theoretical definition of ``$t$'' on integral domains. Since the monoid ring $R[H]$ is an integral domain (if $H$ is a (torsionless) grading monoid), these considerations also apply to $R[H]$.

\begin{lemma}\label{Lem 6.1} Let $R$ be an integral domain, $H$ a grading monoid, $I$ an ideal of $R$, $Y$ an $s$-ideal of $H$ and $A$ a nonzero ideal of $R[H]$.
\begin{itemize}
\item[(1)] $\sqrt[{R[H]}]{I[Y]}=\sqrt[R]{I}[\sqrt[H]{Y}]$.
\item[(2)] $(I[Y])_{t_{R[H]}}=I_{t_R}[Y_{t_H}]$.
\item[(3)] Let $I$ be a $t$-ideal of $R$ and $Y$ a $t$-ideal of $H$. Then $I[Y]$ is $t$-invertible if and only if $I$ and $Y$ are $t$-invertible.
\item[(4)] $A=J[Z]$ for some $t$-ideal $J$ of $R$ and some $t$-ideal $Z$ of $H$ if and only if $A$ is a homogeneous $t$-ideal of $R[H]$ if and only if $A=F_t$ for some nonempty set $F$ of nonzero homogeneous elements of $R[H]$.
\item[(5)] If $R[H]$ is integrally closed and $A$ is a $t$-ideal of $R[H]$ that contains a nonzero homogeneous element of $R[H]$, then $A$ is homogeneous.
\end{itemize}
\end{lemma}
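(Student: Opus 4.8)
The plan is to prove the five parts in the order (1), then (2) together with (3), then (4), and finally (5); the inverse of a homogeneous ideal of $R[H]$ can be computed explicitly, and this does most of the work for (2)--(4), while only (5) needs an extra ingredient. Throughout I would fix a total order on $H$ compatible with the operation (available since $H$ is torsionless) and use freely that $K[G]=S^{-1}R[H]$ is a $G$-graded domain all of whose nonzero homogeneous elements are units, that $R[H]$ is a graded subring, and hence that the homogeneous components of any element of $R[H]$ again lie in $R[H]$. For part (1) I would first show that the radical of a homogeneous ideal is homogeneous: if $f=a_1X^{g_1}+\cdots+a_kX^{g_k}\in\sqrt{I[Y]}$ with $g_1<\cdots<g_k$ and all $a_i\neq 0$, then $f^n\in I[Y]$ for some $n$, the top-degree term $a_k^nX^{ng_k}$ of $f^n$ lies in the homogeneous ideal $I[Y]$, so $a_kX^{g_k}\in\sqrt{I[Y]}$, and subtracting (legitimate since $\sqrt{I[Y]}$ is an ideal) and inducting on $k$ finishes. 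Then a homogeneous element $aX^g$ lies in $\sqrt{I[Y]}$ iff $a^m\in I$ and $mg\in Y$ for a common $m$, i.e.\ iff $a\in\sqrt[R]{I}$ and $g\in\sqrt[H]{Y}$, which is the claimed identity.

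For parts (2) and (3), the hinge is the explicit formula $(I[Y])^{-1}=I^{-1}[Y^{-1}]$ (the elements $\sum_\gamma x_\gamma X^\gamma$ with coefficients $x_\gamma\in I^{-1}\subseteq K$ and exponents $\gamma\in Y^{-1}\subseteq G$). For the inclusion ``$\subseteq$'', if $x\in q(R[H])$ with $xI[Y]\subseteq R[H]$, then multiplying by a nonzero homogeneous $aX^g\in I[Y]$ forces $x\in K[G]$, and comparing degree-$(\gamma+g)$ homogeneous components in $x\cdot aX^g\in R[H]$ over all $a\in I$, $g\in Y$ gives $x_\gamma\in I^{-1}$ and $\gamma\in Y^{-1}$; ``$\supseteq$'' is a direct check. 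Applying the formula twice yields $(I[Y])_v=((I[Y])^{-1})^{-1}=(I^{-1}[Y^{-1}])^{-1}=I_v[Y_v]$. For the $t$-statement I would write $(I[Y])_t=\bigcup_E E_v$ over finite $E\subseteq I[Y]$, reduce the union to finite homogeneous $E=\{a_iX^{g_i}\}\subseteq I[Y]$ (the homogeneous components of elements of $I[Y]$ again lie in $I[Y]$), note that the same computation gives $E_v=\{a_i\}_{v_R}[\{g_i\}_{v_H}]$, and then identify $\bigcup_E E_v$ with $I_{t_R}[Y_{t_H}]$ by a routine directed-union argument. For (3), once $I$ and $Y$ are $t$-ideals, part (2) makes $I[Y]$ a $t$-ideal, and $(I[Y]\,(I[Y])^{-1})_t=((II^{-1})[Y+Y^{-1}])_t=(II^{-1})_{t_R}[(YY^{-1})_{t_H}]$ (here $II^{-1}$ is an ideal of $R$ and $Y+Y^{-1}$ an $s$-ideal of $H$), which equals $R[H]$ exactly when $(II^{-1})_{t_R}=R$ and $(YY^{-1})_{t_H}=H$, i.e.\ when $I$ and $Y$ are both $t$-invertible.

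For part (4): if $A=J[Z]$ with $J,Z$ $t$-ideals then $A$ is homogeneous and $A_{t_{R[H]}}=J_{t_R}[Z_{t_H}]=A$ by part (2); if $A$ is a homogeneous $t$-ideal, let $F$ be its set of nonzero homogeneous elements, so $A$ is the ring ideal generated by $F$ and $A=F_d\subseteq F_t\subseteq A_t=A$; and if $A=F_t$ for a nonempty set $F$ of nonzero homogeneous elements, the $t$-computation from part (2) applied with $F$ in place of a homogeneous generating set writes $A$ as $J[Z]$ with $J$ the $t_R$-ideal of $R$ and $Z$ the $t_H$-ideal of $H$ generated respectively by the coefficients and the exponents of the elements of $F$.

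For part (5), the hypothesis that $R[H]$ is integrally closed (equivalently, $R$ integrally closed and $H$ root-closed) finally gets used. Since $A$ is a $t$-ideal, $A=\bigcup_E E_v$ over finite $E\subseteq A$, and enlarging each $E$ to contain the given nonzero homogeneous element reduces the claim to: a divisorial ideal of $R[H]$ containing a nonzero homogeneous element is homogeneous (a directed union of homogeneous ideals being homogeneous). For a finite $E\subseteq A$ containing a nonzero homogeneous element, multiplying by that element shows $E^{-1}\subseteq K[G]$, and since $E\subseteq R[H]$ we have $1\in E^{-1}$; so it suffices to show $E^{-1}$ is homogeneous, for then $E_v=(E^{-1})^{-1}$ is homogeneous too. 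This last point is a content (Dedekind--Mertens type) statement for the graded ring $R[H]$: one must show that for $x\in E^{-1}$ the relations $xf\in R[H]$ ($f\in E$) force each homogeneous component of $x$ to carry $E$ back into $R[H]$, the only obstruction being that a product $fx$ can have cancelling cross-terms of equal degree. It is precisely here that integral closedness is needed to rule this out, and I expect this content step to be the main obstacle; failing a ready-made content lemma, it can be obtained from the representation of the integrally closed domain $R[H]$ as an intersection of homogeneous (graded) valuation overrings.
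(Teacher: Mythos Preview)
Your proposal is correct and follows the same overall route as the paper. For part (1) both of you induct on the number of homogeneous components using the total order on $H$; the only cosmetic difference is that the paper peels off the lowest-degree term while you peel off the highest. For part (5) you make exactly the same reduction as the paper: enlarge each finite $E\subseteq A$ to contain the given homogeneous element and show that $E_v$ is homogeneous. The paper then simply cites Anderson--Anderson \cite[Theorems 3.2 and 3.7]{AF} for this last fact, whereas you correctly identify it as the key step requiring integral closure and sketch the standard graded-valuation/content argument behind those theorems.

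The genuine difference is in parts (2)--(4). The paper outsources (2) and (3) entirely to \cite[Corollary 2.4]{BIK}, and in (4) invokes \cite[Proposition 2.5]{AA} and \cite[Proposition 2.5]{BIK} to know that $D_v=J_D[Z_D]$ for finite homogeneous $D$. You instead prove the pivotal identity $(I[Y])^{-1}=I^{-1}[Y^{-1}]$ directly (your argument is correct: a nonzero homogeneous element of $I[Y]$ forces any $x\in(I[Y])^{-1}$ into $K[G]$, and then componentwise comparison finishes), deduce $(I[Y])_v=I_v[Y_v]$ and its finite-set analogue $\{a_iX^{g_i}\}_v=\{a_i\}_{v_R}[\{g_i\}_{v_H}]$, and pass to $t$ by directed union. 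This single computation then drives your proofs of (2), (3), and the hardest implication of (4) uniformly. So your treatment is more self-contained than the paper's, at the cost of spelling out what the cited references contain; both approaches are equally valid.
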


\begin{proof} (1) Recall that there is some total order $\leq$ on $H$ that is compatible with the monoid operation on $H$.

First let $f\in\sqrt{I[Y]}$ be nonzero. Then $f^k\in I[Y]\subseteq\sqrt{I}[\sqrt{Y}]$ for some $k\in\mathbb{N}$. We have that $f=\sum_{i=1}^n f_iX^{a_i}$ for some $n\in\mathbb{N}$, $(f_i)_{i=1}^n\in (R^{\bullet})^n$ and $(a_i)_{i=1}^n\in H^n$ with $a_j<a_k$ for all $j,k\in [1,n]$ with $j<k$. We show by induction on $m$ that $\sum_{i=1}^m f_iX^{a_i}\in\sqrt{I}[\sqrt{Y}]$ for all $m\in [1,n]$. Let $m\in [1,n]$. Set $g=\sum_{i=1}^{m-1} f_iX^{a_i}$. Then $g\in\sqrt{I}[\sqrt{Y}]$ by the induction hypothesis. We have that $\sum_{a\in H} h_aX^a=(f-g)^k=\sum_{i=0}^k(-1)^{k-i}\binom{k}{i}f^ig^{k-i}\in\sqrt{I}[\sqrt{Y}]$ for some formally infinite $(h_a)_{a\in H}\in R^H$. Note that $h_{ka_m}=f_m^k$. Therefore, $f_m^kX^{ka_m}\in\sqrt{I}[\sqrt{Y}]$. Since $f_m^k\not=0$, we infer that $f_m^k\in\sqrt{I}$ and $ka_m\in\sqrt{Y}$. Therefore, $f_m\in\sqrt{I}$ and $a_m\in\sqrt{Y}$, and thus $f_mX^{a_m}\in\sqrt{I}[\sqrt{Y}]$. This implies that $\sum_{i=1}^m f_iX^{a_i}=g+f_mX^{a_m}\in\sqrt{I}[\sqrt{Y}]$.

Conversely, let $f\in\sqrt{I}[\sqrt{Y}]$ be nonzero. Then $f=\sum_{i=1}^n f_iX^{a_i}$ for some $n\in\mathbb{N}$, $(f_i)_{i=1}^n\in (R^{\bullet})^n$ and $(a_i)_{i=1}^n\in H^n$ with $a_j<a_k$ for all $j,k\in [1,n]$ with $j<k$. This implies that $f_i\in\sqrt{I}$ and $a_i\in\sqrt{Y}$ for each $i\in [1,n]$. Consequently, there is some $m\in\mathbb{N}$ such that $f_i^m\in I$ and $ma_i\in Y$ for each $i\in [1,n]$. Let $(m_i)_{i=1}^n\in\mathbb{N}_0^n$ be such that $\sum_{i=1}^n m_i=mn$. Clearly, there is some $j\in [1,n]$ such that $m_j\geq m$. We have that $\prod_{i=1}^n (f_iX^{a_i})^{m_i}=f_j^{m_j}X^{m_ja_j}\prod_{i=1,i\not=j}^n (f_iX^{a_i})^{m_i}\in I[Y]$. Note that $f^{mn}$ is a sum of elements of the form $\prod_{i=1}^n (f_iX^{a_i})^{m_i}$ with $m_i\in\mathbb{N}_0$ and $\sum_{i=1}^n m_i=mn$. Therefore, $f^{mn}\in I[Y]$, and hence $f\in\sqrt{I[Y]}$.

(2), (3) This follows from \cite[Corollary 2.4]{BIK}.

(4) Let $S$ denote the set of nonzero homogeneous elements of $R[H]$. We only need to show that if $A=F_t$ for some nonempty $F\subseteq S$, then $A=J[Z]$ for some $t$-ideal $J$ of $R$ and some $t$-ideal $Z$ of $H$. Set $T=\{E\subseteq S\mid\emptyset\not=E\subseteq A,|E|<\infty\}$. Observe that $A=\bigcup_{E\in T} E_v$. Let $D\in T$. By \cite[Proposition 2.5]{AA} we have that $D_v$ is a homogeneous divisorial ideal of $R[H]$. It follows from \cite[Proposition 2.5]{BIK} that there exist an ideal $J_D$ of $R$ and an $s$-ideal $Z_D$ of $H$ such that $D_v=J_D[Z_D]$. Therefore, for each $C\in T$, there exist an ideal $J_C$ of $R$ and an $s$-ideal $Z_C$ of $H$ such that $C_v=J_C[Z_C]$. Set $J=\bigcup_{C\in T} J_C$ and $Y=\bigcup_{C\in T} Y_C$. Note that if $B,C\in T$ are such that $B\subseteq C$, then $J_B[Z_B]=B_v\subseteq C_v=J_C[Z_C]$, and hence $J_B\subseteq J_C$ and $Z_B\subseteq Z_C$ (since $B_v\not=\{0\}$). Consequently, $J$ is an ideal of $R$ and $Y$ is an $s$-ideal of $H$. Moreover, $A=\bigcup_{E\in T} J_E[Z_E]=J[Z]$. (Note that if $x\in J[Z]$, then $x$ can be represented as a finite sum of elements of the form $x_bX^b$ with $x_b\in J$ and $b\in Z$, and hence there is some $E\in T$ such that all homogeneous components of $x$ are in $J_E[Z_E]$.) We infer that $J[Z]=A=A_t=J_t[Z_t]$, and thus $J_t=J$ is a $t$-ideal of $R$ and $Y_t=Y$ is a $t$-ideal of $H$.

(5) Let $R[H]$ be integrally closed and $A$ a $t$-ideal of $R[H]$ that contains a nonzero homogeneous element $x\in R[H]$. Let $f\in A$. Then there is some finite $E\subseteq A$ such that $\{x,f\}\subseteq E_v$. It follows from \cite[Theorems 3.2 and 3.7]{AF} that $E_v$ is homogeneous. Therefore, all homogeneous components of $f$ are contained in $E_v\subseteq A$.
\end{proof}

\begin{proposition}\label{Prop 6.2} Let $K$ be a field and $G$ a nontrivial torsionfree Abelian group. The following are equivalent:
\begin{itemize}
\item[(1)] The radical of every principal ideal of $K[G]$ is principal.
\item[(2)] $K[G]$ is radical factorial.
\item[(3)] $K[G]$ is a $t$-SP-domain.
\item[(4)] Every nonzero prime $t$-ideal of $K[G]$ contains a nonzero radical principal ideal of $K[G]$.
\end{itemize}
If $G$ satisfies the ascending chain condition on cyclic subgroups, then these equivalent conditions are satisfied.
\end{proposition}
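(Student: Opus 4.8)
The plan is to exploit that $K[G]$ is a completely integrally closed $t$-B\'ezout domain — recorded in Section~6 — and then to read the equivalence of (1)--(4) directly off Theorem~\ref{Thm 3.10} and Corollary~\ref{Cor 4.5}; the last assertion will then follow from the classical unique factorization theorem for group rings. Throughout, $K[G]$ is viewed as a monoid through its multiplicative structure, and its ring- and monoid-theoretic $t$-operations agree. Since $K[G]$ is $t$-B\'ezout it is $t$-Pr\"ufer, hence $t$-treed, and $\mathcal{C}_t(K[G])$ is trivial. So when we apply the characterization theorems with $H=K[G]$ and $r=t$, every hypothesis of the form ``$t$-treed'', ``$\mathcal{C}_t$ trivial'' or ``$t$-B\'ezout'' is automatically met, and the conditions that actually remain to be tested are precisely (1)--(4).

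Concretely: (3) says $K[G]$ is a $t$-SP-monoid, which — since $K[G]$ is $t$-B\'ezout — is the same as saying $K[G]$ is a $t$-B\'ezout $t$-SP-monoid. By Corollary~\ref{Cor 4.5} this is equivalent to the radical of every principal ideal of $K[G]$ being principal, i.e.\ to (1). By Theorem~\ref{Thm 3.10}, the equivalence (1)$\Leftrightarrow$(2) there, together with ``$K[G]$ is $t$-B\'ezout'', gives the equivalence with $K[G]$ being radical factorial, i.e.\ with (2). And the equivalence (1)$\Leftrightarrow$(3) in Theorem~\ref{Thm 3.10}, after discarding the two clauses that hold automatically for $K[G]$, gives the equivalence with: every nonzero (equivalently, nontrivial) prime $t$-ideal of $K[G]$ contains a nonzero radical principal ideal, i.e.\ with (4). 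Hence (1)--(4) are all equivalent.

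Finally, assume $G$ satisfies the ascending chain condition on cyclic subgroups. It is classical that $K[G]$ is then factorial, i.e.\ a unique factorization domain. But every factorial monoid is radical factorial: a prime principal ideal is radical, so a finite product of prime principal ideals is a finite product of radical principal ideals. Thus $K[G]$ satisfies condition (2), and so all of (1)--(4) hold. (Alternatively, in a unique factorization domain $\sqrt{fK[G]}$ is generated by the product of the prime divisors of $f$, which gives (1) directly.)

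The only genuinely new ingredient beyond the machinery already in place is that classical factoriality statement; once $K[G]$ is known to be $t$-B\'ezout, the equivalence of (1)--(4) is bookkeeping through Theorem~\ref{Thm 3.10} and Corollary~\ref{Cor 4.5}. So the one nontrivial point is the unique factorization theorem itself, whose proof passes to finitely generated — hence free — subgroups of $G$, where $K[G]$ is a Laurent polynomial ring and therefore a UFD, and uses the chain condition to bound how far a fixed element can be refined as one runs through this ascending union. If a self-contained treatment is preferred, the chain condition should be used to establish the ascending chain condition on principal ideals of $K[G]$, which, combined with the GCD ($t$-B\'ezout) property, already forces factoriality.
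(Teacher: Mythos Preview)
Your proposal is correct and follows essentially the same approach as the paper: the equivalence of (1)--(4) is deduced from Theorem~\ref{Thm 3.10} and Corollary~\ref{Cor 4.5} using that $K[G]$ is a $t$-B\'ezout domain, and the final assertion comes from the classical fact (cited in the paper as \cite[Theorem 2.3(a)]{DFA}) that $K[G]$ is factorial when $G$ satisfies the ascending chain condition on cyclic subgroups. Your write-up is more expansive than the paper's two-line proof, but the logical route is identical.
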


\begin{proof} The equivalence is an immediate consequence of Theorem~\ref{Thm 3.10} and Corollary~\ref{Cor 4.5}. Now let $G$ satisfy the ascending chain condition on cyclic subgroup. It follows from \cite[Theorem 2.3(a)]{DFA} that $K[G]$ is factorial, and thus $K[G]$ is radical factorial.
\end{proof}

Note that the equivalent conditions in Proposition~\ref{Prop 6.2} are not always satisfied. Let $p$ be a prime number, $G$ a nontrivial additive torsionfree $p$-divisible Abelian group (e.g. $(G,+)=(\mathbb{Q},+)$ or $(G,+)=(\mathbb{Z}[\frac{1}{p}],+)$) and $K$ a field of characteristic $p$. Then $K[G]$ does not satisfy the equivalent conditions in Proposition~\ref{Prop 6.2}. Assume to the contrary that the radical of every principal ideal of $K[G]$ is principal. Let $a\in G$ be nonzero. There is some $f\in K[G]$ such that $\sqrt{(1+X^a)K[G]}=fK[G]$. Consequently, there is some $m\in\mathbb{N}$ such that $f^{p^m}\in (1+X^a)K[G]$. There is some nonzero $b\in G$ such that $p^mb=a$. Observe that $K[G]$ has also characteristic $p$, and hence $1+X^a=(1+X^b)^{p^m}$. Note that $\frac{f}{1+X^b}$ is an element of the field of quotients of $K[G]$. Since $K[G]$ is completely integrally closed, and thus root-closed, we infer that $f\in (1+X^b)K[G]$. It follows that $\sqrt{(1+X^b)K[G]}=\sqrt{(1+X^a)K[G]}=fK[G]=(1+X^b)K[G]$. There is some nonzero $c\in G$ such that $pc=b$. Consequently, $1+X^c\in\sqrt{(1+X^b)K[G]}=(1+X^c)^pK[G]$, and thus $1+X^c\in K[G]^{\times}$, a contradiction.

\begin{proposition}\label{Prop 6.3} Let $R$ be an integral domain, $H$ a grading monoid and $S$ the set of nonzero homogeneous elements of $R[H]$. The following are equivalent:
\begin{itemize}
\item[(1)] $R[H]$ is integrally closed and every $t$-ideal $A$ of $R[H]$ with $A\cap S\not=\emptyset$ is a finite $t$-product of radical $t$-ideals of $R[H]$.
\item[(2)] $R[H]$ is integrally closed and every homogeneous $t$-ideal of $R[H]$ is a finite $t$-product of radical $t$-ideals of $R[H]$.
\item[(3)] Every $t$-ideal $A$ of $R[H]$ with $A\cap S\not=\emptyset$ is a finite $t$-product of homogeneous radical $t$-ideals of $R[H]$.
\item[(4)] Every homogeneous $t$-ideal of $R[H]$ is a finite $t$-product of homogeneous radical $t$-ideals of $R[H]$.
\item[(5)] $R$ is a $t$-SP-domain and $H$ is a $t$-SP-monoid.
\end{itemize}
\end{proposition}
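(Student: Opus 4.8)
The plan is to prove the equivalences in the order $(4)\Leftrightarrow(5)$, then $(5)\Rightarrow$``$R[H]$ is integrally closed'', and finally, with $R[H]$ known to be integrally closed, $(1)\Leftrightarrow(2)\Leftrightarrow(4)$ and $(3)\Leftrightarrow(4)$, the latter three being essentially formal consequences of the homogeneous-ideal dictionary in Lemma~\ref{Lem 6.1}. For $(4)\Leftrightarrow(5)$: by Lemma~\ref{Lem 6.1}(2) and~(4), the assignment $(J,Z)\mapsto J[Z]$, with $J$ a nonzero $t$-ideal of $R$ and $Z$ a $t$-ideal of $H$ with $Z\neq\emptyset$, is a bijection onto the nonzero homogeneous $t$-ideals of $R[H]$; since $I[Y]J[Z]=(IJ)[Y+Z]$, Lemma~\ref{Lem 6.1}(2) shows that this bijection carries $t$-products to componentwise $t$-products, and by Lemma~\ref{Lem 6.1}(1) it identifies the radical homogeneous $t$-ideals with the pairs in which both coordinates are radical. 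Hence $J[Z]$ is a finite $t$-product of homogeneous radical $t$-ideals iff $J$ is a finite $t$-product of radical $t$-ideals of $R$ and $Z$ is a finite $t$-product of radical $t$-ideals of $H$; the only technical point is to align the numbers of factors, which one does by padding with copies of the radical $t$-ideals $R$ and $H$. Specializing $(4)$ to $Z=H$ and to $J=R$ (and noting that $\{0\}$, $R$, $H$ are radical, so the degenerate cases are automatic) gives $(5)$, and $(5)$ gives $(4)$ by reassembling.

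Next, $(5)\Rightarrow$``$R[H]$ is integrally closed''. If $R$ is a $t$-SP-domain, then in particular every principal ideal of $R$ is a finite $t$-product of radical $t$-ideals, so by the remark preceding Proposition~\ref{Prop 3.1} the localization $R_M$ is radical factorial for each $M\in t$-$\max(R)$; by \cite[Proposition 2.4]{R} every such $R_M$ equals $\bigcap_{P\in\mathfrak{X}(R_M)}(R_M)_P$ with each $(R_M)_P$ a DVR, hence is completely integrally closed, so $R=\bigcap_{M\in t\textnormal{-}\max(R)}R_M$ is completely integrally closed. The same reasoning in the monoid setting shows that a $t$-SP-monoid $H$ is completely integrally closed, hence root-closed. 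By \cite[Theorem 3.7(d)]{DFA}, $R[H]$ is integrally closed.

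Finally, assume $R[H]$ is integrally closed --- this is part of $(1)$ and $(2)$ and, via the previous step, follows from $(4)$ and $(5)$ (and hence, through $(3)\Rightarrow(4)$ below, from $(3)$ as well). By Lemma~\ref{Lem 6.1}(5) every $t$-ideal $A$ of $R[H]$ with $A\cap S\neq\emptyset$ is homogeneous, while a homogeneous $t$-ideal is either $\{0\}$ (a radical $t$-ideal) or meets $S$. Then $(1)\Leftrightarrow(2)$ is immediate; $(4)\Rightarrow(2)$ is trivial; $(4)\Rightarrow(3)$ holds because the $t$-ideals meeting $S$ are exactly the nonzero homogeneous ones; $(3)\Rightarrow(4)$ holds because, by $I[Y]J[Z]=(IJ)[Y+Z]$ and Lemma~\ref{Lem 6.1}(2), any finite $t$-product of homogeneous $t$-ideals is homogeneous, so under $(3)$ every nonzero homogeneous $t$-ideal already factors as required; and for $(2)\Rightarrow(4)$, if $A\neq\{0\}$ is homogeneous and $A=(\prod_i B_i)_t$ with the $B_i$ radical $t$-ideals, then $A\subseteq B_i$ forces $B_i\cap S\neq\emptyset$, so each $B_i$ is homogeneous by Lemma~\ref{Lem 6.1}(5). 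I expect the one genuine obstacle to be the step $(5)\Rightarrow$``$R[H]$ integrally closed'' --- i.e.\ recognizing that $t$-SP-domains and $t$-SP-monoids are completely integrally closed --- since that is exactly what activates Lemma~\ref{Lem 6.1}(5); the remainder is bookkeeping with homogeneous ideals.
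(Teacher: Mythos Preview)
Your proof is correct and follows essentially the same approach as the paper: both use Lemma~\ref{Lem 6.1} as the dictionary between homogeneous $t$-ideals of $R[H]$ and pairs $(J,Z)$ of $t$-ideals, and both deduce integral closure of $R[H]$ from condition (5). The only cosmetic differences are that the paper cites \cite[Proposition 3.10(3)]{R} directly for the complete integral closure of $t$-SP-domains and $t$-SP-monoids (rather than rederiving it via localizations as you do), and it organizes the cycle as $(5)\Rightarrow(1)\Rightarrow(2)\Rightarrow(4)\Rightarrow(5)$ together with $(1)\Rightarrow(3)\Rightarrow(4)$, whereas you pivot around $(4)\Leftrightarrow(5)$.
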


\begin{proof} (1) $\Rightarrow$ (2), (3) $\Rightarrow$ (4): This is obviously true.

(1) $\Rightarrow$ (3), (2) $\Rightarrow$ (4): This is an immediate consequence of Lemma~\ref{Lem 6.1}(5).

(4) $\Rightarrow$ (5): Let $I$ be a nonzero $t$-ideal of $R$ and let $Y$ be a nonempty $t$-ideal of $H$. Then $I[Y]$ is a homogeneous $t$-ideal of $R[H]$ by Lemma~\ref{Lem 6.1}(4). Therefore, there exist $n\in\mathbb{N}$ and finitely many homogeneous radical $t$-ideals $A_i$ of $R[H]$ such that $I[Y]=(\prod_{i=1}^n A_i)_t$. It follows from Lemma~\ref{Lem 6.1} that, for each $j\in [1,n]$, there is some radical $t$-ideal $I_j$ of $R$ and some radical $t$-ideal $Y_j$ of $H$ such that $A_j=I_j[Y_j]$. We infer that $I[Y]=(\prod_{i=1}^n I_i[Y_i])_t=((\prod_{i=1}^n I_i)[\sum_{i=1}^n Y_i])_t=(\prod_{i=1}^n I_i)_t[(\sum_{i=1}^n Y_i)_t]$, and hence $I=(\prod_{i=1}^n I_i)_t$ and $Y=(\sum_{i=1}^n Y_i)_t$.

(5) $\Rightarrow$ (1): It follows from \cite[Proposition 3.10(3)]{R} that $R$ and $H$ are completely integrally closed. Therefore, $R[H]$ is completely integrally closed, and hence it is integrally closed.

Now let $A$ be a nonzero $t$-ideal of $R[H]$ such that $A\cap S\not=\emptyset$. By Lemma~\ref{Lem 6.1} there exist a $t$-ideal $I$ of $R$ and a $t$-ideal $Y$ of $H$ such that $A=I[Y]$. There exist $n,m\in\mathbb{N}$, finitely many radical $t$-ideals $I_i$ of $R$ such that $I=(\prod_{i=1}^n I_i)_t$ and finitely many radical $t$-ideals $Y_j$ of $H$ such that $Y=(\sum_{j=1}^m Y_j)_t$. We infer by Lemma~\ref{Lem 6.1} that $I_i[H]$ is a homogeneous radical $t$-ideal of $R[H]$ for all $i\in [1,n]$ and $R[Y_j]$ is a homogeneous radical $t$-ideal of $R[H]$ for all $j\in [1,m]$. Finally, we have that $A=(I[H]R[Y])_t=((\prod_{i=1}^n I_i)_t[H]R[(\sum_{j=1}^m Y_j)_t])_t=((\prod_{i=1}^n I_i[H])_t(\prod_{j=1}^m R[Y_j])_t)_t=(\prod_{i=1}^n I_i[H]\prod_{j=1}^m R[Y_j])_t$.
\end{proof}

\begin{proposition}\label{Prop 6.4} $[$cf. \cite[Proposition 2.17]{R}$]$ Let $R$ be an integral domain, $H$ a grading monoid, $K$ a field of quotients of $R$ and $G$ a quotient group of $H$.
\begin{itemize}
\item[(1)] $R[H]$ is a $w$-SP-domain if and only if $R$ is a $w$-SP-domain, $H$ is a $w$-SP-monoid and $K[G]$ is radical factorial.
\item[(2)] $R[H]$ is a $w$-B\'ezout $w$-SP-domain if and only if $R$ is a $w$-B\'ezout $w$-SP-domain, $H$ is a  $w$-B\'ezout $w$-SP-monoid and $K[G]$ is radical factorial.
\end{itemize}
\end{proposition}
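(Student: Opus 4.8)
The plan is to put everything through Corollary~\ref{Cor 4.4}: for a domain (or monoid), being a $w$-SP-domain (monoid) is equivalent to the radical of every nonzero (nontrivial) principal ideal being $t$-invertible, in which case the ring is automatically a $t$-almost Dedekind $t$-SP-domain. Write $S$ for the set of nonzero homogeneous elements of $R[H]$, so $S^{-1}(R[H])=K[G]$. For the forward implication of (1), suppose $R[H]$ is a $w$-SP-domain. For $a\in R^{\bullet}$ we have $aR[H]=(aR)[H]$, hence $\sqrt[{R[H]}]{aR[H]}=\sqrt[R]{aR}[H]$ by Lemma~\ref{Lem 6.1}(1); this is a $t$-ideal and so $t$-invertible, and since $H$ is $t$-invertible as an ideal of itself, Lemma~\ref{Lem 6.1}(3) forces $\sqrt[R]{aR}$ to be $t$-invertible in $R$, so $R$ is a $w$-SP-domain by Corollary~\ref{Cor 4.4}. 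The same computation applied to $X^{g}R[H]=R[g+H]$, with $\sqrt[{R[H]}]{X^{g}R[H]}=R[\sqrt[H]{g+H}]$, shows $\sqrt[H]{g+H}$ is $t$-invertible in $H$ for all $g\in H$, so $H$ is a $w$-SP-monoid. Finally $K[G]=S^{-1}(R[H])$ is a localization of $R[H]$; radicals of principal ideals and $t$-invertibility localize, so $K[G]$ is $w$-SP, in particular a $t$-SP-domain, hence radical factorial by Proposition~\ref{Prop 6.2}.

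For the converse of (1), assume $R$ and $H$ are $w$-SP and $K[G]$ is radical factorial. Being $t$-SP, $R$ and $H$ are completely integrally closed (\cite[Proposition~3.10(3)]{R}), so $R[H]$ is completely integrally closed (\cite[Theorem~3.7(e)]{DFA}), in particular integrally closed; and $K[G]$, being $t$-B\'ezout and radical factorial, is a $t$-almost Dedekind $t$-SP-domain by Theorem~\ref{Thm 3.10}. First I would show $R[H]$ is $t$-almost Dedekind, i.e.\ $R[H]_{M}$ is a DVR for every $M\in t$-$\max(R[H])$. If $M\cap S=\emptyset$, then $R[H]_{M}$ is a localization of $K[G]$ at a prime $t$-ideal, hence a DVR. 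If $M\cap S\neq\emptyset$, then $M$ is homogeneous by Lemma~\ref{Lem 6.1}(5), so $M=P[H]$ for a $t$-maximal $t$-ideal $P$ of $R$ or $M=R[Z]$ for a $t$-maximal $t$-ideal $Z$ of $H$; localizing at $R\ssm P$ (resp.\ at $R^{\bullet}$, then inverting $\{X^{h}\mid h\notin Z\}$) turns $R[H]_{M}$ into the localization of $R_{P}[H]$ (resp.\ of $K[H_{Z}]$, which is a polynomial ring in one variable over the group ring $K[H_{Z}^{\times}]$ of the torsion-free abelian group $H_{Z}^{\times}$) at a prime principal ideal generated by a uniformizer of the DVR $R_{P}$ (resp.\ by $X^{z_{0}}$, where $z_{0}$ generates the maximal ideal of the DVM $H_{Z}$), which is a DVR. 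Thus $R[H]$ is $t$-almost Dedekind, hence $t$-treed with $t$-$\max(R[H])=\mathfrak X(R[H])$, so by Theorem~\ref{Thm 3.9} (or Theorem~\ref{Thm 4.2}) it suffices to prove $\sqrt[{R[H]}]{fR[H]}$ is $t$-invertible for each nonzero $f\in R[H]$; being already locally principal, this means showing it is $t$-finitely generated.

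This is the hard part, and I expect it to be the main obstacle. The plan is to split $\sqrt[{R[H]}]{fR[H]}=\bigcap_{P\in\mathcal P(fR[H])}P$ according to whether $P\cap S=\emptyset$. The primes with $P\cap S\neq\emptyset$ are homogeneous, and their intersection has the form $\sqrt[R]{c(f)}[\sqrt[H]{Y(f)}]$, where $c(f)$ is the finitely generated content ideal of $f$ in $R$ and $Y(f)$ the finitely generated $s$-ideal of $H$ spanned by the exponents of $f$; since $R$ and $H$ are $w$-SP the radicals $\sqrt[R]{c(f)}$ and $\sqrt[H]{Y(f)}$ are $t$-invertible, so this factor is $t$-invertible by Lemma~\ref{Lem 6.1}(3). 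The intersection of the primes with $P\cap S=\emptyset$ equals $\sqrt[{K[G]}]{fK[G]}\cap R[H]$; here $K[G]$ being $t$-B\'ezout gives $\sqrt[{K[G]}]{fK[G]}=uK[G]$ with $u$ a radical element of $K[G]$ that may be chosen in $R[H]$, and the hypothesis that $K[G]$ is radical factorial is used to show that this contracted ideal --- an $S$-saturation of a principal ideal of $R[H]$ --- is $t$-finitely generated. Combining the two pieces (using that $R[H]$ is $t$-Pr\"ufer, so finite intersections of $t$-invertible $t$-ideals are well behaved) yields that $\sqrt[{R[H]}]{fR[H]}$ is $t$-invertible, and hence $R[H]$ is a $w$-SP-domain. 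I would model the saturation estimate for $\sqrt[{K[G]}]{fK[G]}\cap R[H]$ on the proof of \cite[Proposition~2.17]{R}; alternatively, one could route the converse through Corollary~\ref{Cor 5.3} and Proposition~\ref{Prop 5.2}, proving instead that $\mathcal I_{t}^{*}(R[H])$ is radical factorial by exhibiting meager sets built from the homogeneous data of $R$ and $H$ and from the radical factorizations in $K[G]$.

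For (2): by Corollary~\ref{Cor 4.5} and Theorem~\ref{Thm 3.10}, a $w$-B\'ezout $w$-SP-domain (resp.\ monoid) is precisely a $w$-SP-domain (resp.\ monoid) with trivial $w$-class group, equivalently one in which the radical of every principal ideal is principal. Since $K[G]$ is always $t$-B\'ezout, the hypothesis on $K[G]$ is unchanged, so (2) follows from (1) once one checks that $\mathcal C_{w}(R[H])$ is trivial if and only if $\mathcal C_{w}(R)$ and $\mathcal C_{w}(H)$ are trivial, granting the $w$-SP hypotheses. For the nontrivial direction I would use Lemma~\ref{Lem 6.1}(3)--(5) to express each $t$-invertible $t$-ideal of $R[H]$ as a $t$-product of homogeneous $t$-invertible $t$-ideals of the forms $I[H]$ and $R[Z]$, whose principality is governed by $R$ and $H$, together with a $t$-invertible $t$-ideal contracted from $K[G]$, which is automatically principal because $K[G]$ is $t$-B\'ezout.
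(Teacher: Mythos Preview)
Your forward direction of (1) and the overall decomposition for the converse match the paper's approach. The gap is exactly where you flagged it.

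For the converse of (1), the paper does not first prove that $R[H]$ is $t$-almost Dedekind via a case analysis on $t$-maximal ideals. Instead it quotes \cite[Proposition~6.5]{AF} to obtain directly that $R[H]$ is a P$v$MD (a $t$-Pr\"ufer domain); this both avoids your localization argument and, more importantly, gives for free that intersections of two $t$-invertible $t$-ideals are $t$-invertible, via $(AB)_t=((A\cap B)(A\cup B))_t$. Your case analysis has a soft spot in the non-homogeneous case: from $M\cap S=\emptyset$ you get a prime $MK[G]$ of $K[G]$, but you still need it to be a prime $t$-ideal of $K[G]$ before you can invoke that $K[G]$ is $t$-almost Dedekind, and that step is not automatic without already knowing something like P$v$MD for $R[H]$.

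The piece you call the ``main obstacle'' is resolved in the paper by a single identity from \cite[Lemmas~1.5 and 1.6]{BIK}: because $R[H]$ is completely integrally closed, for every nonzero $g\in R[H]$ one has
\[
gK[G]\cap R[H]\;=\;g\,C(g)^{-1},
\]
where $C(g)$ is the (homogeneous, finitely generated) content ideal of $g$ in $R[H]$. Since $R[H]$ is a P$v$MD, $C(g)_t$ is $t$-invertible, hence so is $gC(g)^{-1}$; thus $gK[G]\cap R[H]$ is $t$-invertible with no further work. Using Proposition~\ref{Prop 6.2} to write $\sqrt[{K[G]}]{fK[G]}=gK[G]$ for some $g\in R[H]$ then gives that $\sqrt{fK[G]\cap R[H]}=gK[G]\cap R[H]$ is $t$-invertible. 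The same identity also yields $fR[H]=(C(f)_t\,(fK[G]\cap R[H]))_t$, so $\sqrt{fR[H]}=\sqrt{C(f)_t}\cap\sqrt{fK[G]\cap R[H]}$; the first factor is handled exactly as you propose, and the intersection is $t$-invertible by the P$v$MD remark above.

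For (2), the paper avoids your hands-on decomposition entirely: since $R[H]$ is completely integrally closed, \cite[Corollary~2.11]{BIK} gives $\mathcal{C}_t(R[H])\cong\mathcal{C}_t(R)\oplus\mathcal{C}_t(H)$, so triviality of the $t$-class group transfers in both directions and (2) follows from (1).
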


\begin{proof} (1) Let $R$ be a $w$-SP-domain, let $H$ be a $w$-SP-monoid and let $K[G]$ be radical factorial. Note that $R$ is a $t$-Pr\"ufer domain (i.e., a P$v$MD) and $H$ is a $t$-Pr\"ufer monoid by Theorem~\ref{Thm 4.2} and Corollaries~\ref{Cor 4.4} and~\ref{Cor 5.3}. Therefore, $R[H]$ is a $t$-Pr\"ufer domain by \cite[Proposition 6.5]{AF}. In particular, if $A$ and $B$ are $t$-invertible $t$-ideals of $R[H]$, then $(AB)_t=((A\cap B)(A\cup B))_t$, and hence $A\cap B$ is $t$-invertible. For $g\in R[H]$ let $C(g)$ be the ideal of $R[H]$ generated by the homogeneous components of $g$. Since $R$ and $H$ are completely integrally closed, $R[H]$ is completely integrally closed, and hence it follows by \cite[Lemmas 1.5 and 1.6]{BIK} that for every nonzero $g\in R[H]$, $gC(g)^{-1}=gK[G]\cap R[H]$. In particular, if $g\in R[H]$ is nonzero, then $gR[H]=g(C(g)_tC(g)^{-1})_t=(C(g)_tgC(g)^{-1})_t=(C(g)_t(gK[G]\cap R[H]))_t$ (since $R[H]$ is a $t$-Pr\"ufer domain) and $gK[G]\cap R[H]$ is a $t$-invertible $t$-ideal of $R[H]$.

By Theorem~\ref{Thm 4.2} and Corollary~\ref{Cor 4.4} it is sufficient to show that the radical of every nonzero principal ideal of $R[H]$ is $t$-invertible. Let $f\in R[H]$ be nonzero. It follows from Proposition~\ref{Prop 6.2} that there is some $g\in R[H]$ such that $\sqrt{fK[G]\cap R[H]}=(\sqrt[{K[G]}]{fK[G]})\cap R[H]=gK[G]\cap R[H]$ (here we use that $K[G]$ is a quotient overring of $R[H]$). This implies that $\sqrt{fK[G]\cap R[H]}$ is $t$-invertible. Moreover, $C(f)_t=I[Y]$ for some $t$-invertible $t$-ideal $I$ of $R$ and some $t$-invertible $t$-ideal $Y$ of $H$ by Lemma~\ref{Lem 6.1}. It follows from Theorem~\ref{Thm 4.2} and Corollary~\ref{Cor 4.4} that $\sqrt{I}$ is a $t$-invertible $t$-ideal of $R$ and $\sqrt{Y}$ is a $t$-invertible $t$-ideal of $H$. Therefore, $\sqrt{C(f)_t}=\sqrt{I}[\sqrt{Y}]$ is $t$-invertible by Lemma~\ref{Lem 6.1}.

We have that $fR[H]=(C(f)_t(fK[G]\cap R[H]))_t$, and thus $\sqrt{fR[H]}=\sqrt{C(f)_t}\cap\sqrt{fK[G]\cap R[H]}$ is $t$-invertible.

\smallskip
Now let $R[H]$ be a $w$-SP-domain. First let $y\in R$ be nonzero. We have that $\sqrt{yR}[H]=\sqrt{(yR)[H]}=\sqrt{yR[H]}$ is $t$-invertible by Lemma~\ref{Lem 6.1}(1), Theorem~\ref{Thm 4.2} and Corollary~\ref{Cor 4.4}. Consequently, $\sqrt{yR}$ is $t$-invertible by Lemma~\ref{Lem 6.1}(3). It follows by Theorem~\ref{Thm 4.2} and Corollary~\ref{Cor 4.4} that $R$ is a $w$-SP-domain. Now let $z\in H$. It follows that $R[\sqrt{z+H}]=\sqrt{R[z+H]}=\sqrt{X^zR[H]}$ is $t$-invertible by Lemma~\ref{Lem 6.1}(1), Theorem~\ref{Thm 4.2} and Corollary~\ref{Cor 4.4}. Therefore, $\sqrt{z+H}$ is $t$-invertible by Lemma~\ref{Lem 6.1}(3). We infer again by Theorem~\ref{Thm 4.2} and Corollary~\ref{Cor 4.4} that $H$ is a $w$-SP-monoid. Finally, let $f\in K[G]$ be nonzero. Let $S$ be the set of nonzero homogeneous elements of $R[H]$. There is some nonzero $g\in R[H]$ such that $fK[G]=gK[G]$. By Theorem~\ref{Thm 4.2} and Corollary~\ref{Cor 4.4} we have that $\sqrt{gR[H]}$ is $t$-invertible, and hence $\sqrt[{K[G]}]{fK[G]}=S^{-1}\sqrt[{R[H]}]{gR[H]}$ is an $S^{-1}t$-invertible $S^{-1}t$-ideal of $K[G]=S^{-1}(R[H])$. Since $S^{-1}t\leq t_{K[G]}$, this implies that $\sqrt[{K[G]}]{fK[G]}$ is a $t$-invertible $t$-ideal of $K[G]$. Consequently, $\sqrt[{K[G]}]{fK[G]}$ is a principal ideal of $K[G]$, since $K[G]$ is a $t$-B\'ezout domain.

\smallskip
(2) In any case, $R[H]$ is completely integrally closed by \cite[Proposition 3.10]{R}, and hence $\mathcal{C}_t(R[H])\cong\mathcal{C}_t(R)\bigoplus\mathcal{C}_t(H)$ by \cite[Corollary 2.11]{BIK}. In particular, $\mathcal{C}_t(R[H])$ is trivial if and only if $\mathcal{C}_t(R)$ and $\mathcal{C}_t(H)$ are both trivial. Therefore, the statement follows by (1) and Theorems~\ref{Thm 3.9} and~\ref{Thm 3.10}.
\end{proof}

Let $R$ be an integral domain with quotient field $K$ and $H$ a grading monoid with quotient group $G$. Note that if $R$ is a $t$-SP-domain, $H$ is a $t$-SP-monoid and $K[G]$ is factorial, then $R[H]$ is in general not a $t$-SP-domain.

Let $K$ be a field, $G=\mathbb{Z}^{(\mathbb{N}_0)}$ (i.e., $G$ is isomorphic to the free Abelian group with basis $\mathbb{N}_0$) and $H=\{(x_j)_{j\in\mathbb{N}_0}\in G\mid x_0\geq x_i\geq 0$ for all $i\in\mathbb{N}_0\}$. Note that $G$ is isomorphic to the direct sum of countably many copies of $\mathbb{Z}$. Clearly, $H$ is a grading monoid. It follows from \cite[Example 4.2]{RR} that $H$ is a $t$-SP-monoid and $\mathcal{C}_t(H)$ is trivial (since $H$ is $t$-local). Let $(X_i)_{i\in\mathbb{N}_0}$ be a sequence of independent indeterminates over $K$. Set $T=K[\{\prod_{i=0}^{\infty} X_i^{\alpha_i}\mid (\alpha_i)_{i\in\mathbb{N}_0}\in H\}]$ and $S=K[\{X_i,X_i^{-1}\mid i\in\mathbb{N}_0\}]$. It is clear that $K[H]\cong T$, $T$ is a subring of $K[\{X_i\mid i\in\mathbb{N}_0\}]$ and $K[G]\cong S$ is factorial. First we show that $T$ is not radical factorial. Let $f=X_0^3(X_1+1)^2(X_2^3+X_1)$. Then $f\in T^{\bullet}\setminus T^{\times}$. It is sufficient to show that $f$ is an atom of $T$ that is not radical. Since $K[X_1]$ is factorial, it follows by Eisenstein's criterion that $X_2^3+X_1$ is a prime element of $K[X_1,X_2]$. Therefore, $X_2^3+X_1$ is a prime element of $K[\{X_i\mid i\in\mathbb{N}_0\}]$. It is clear that $X_0$ and $X_1+1$ are prime elements of $K[\{X_i\mid i\in\mathbb{N}_0\}]$. Let $g,h\in T$ be such that $f=gh$. Since $K[\{X_i\mid i\in\mathbb{N}_0\}]$ is factorial, there are $\eta\in K^{\times}$, $a\in\{0,1,2,3\}$, $b\in\{0,1,2\}$ and $c\in\{0,1\}$ such that $g=\eta X_0^a(X_1+1)^b(X_2^3+X_1)^c$ and $h=\eta^{-1}X_0^{3-a}(X_1+1)^{2-b}(X_2^3+X_1)^{1-c}$. Without restriction let $c=1$. Since $g\in T$, we infer that $a=3$, and thus $b=2$ (since $h\in T$). This implies that $h=\eta^{-1}\in K^{\times}=T^{\times}$, and hence $f$ is an atom of $T$. Note that $X_1+1$ and $X_2^3+X_1$ are prime elements of $S$. Since $S$ is factorial and $f$ is not a square-free product of prime elements of $S$, we have that $f$ is not a radical element of $S$. Since $S$ is a quotient overring of $T$, we infer that $f$ is not a radical element of $T$. Consequently, $T$ is not radical factorial. Since $H$ is completely integrally closed, it follows by \cite[Lemma 2.1 and Corollary 2.10]{BIK} that $\mathcal{C}_t(T)\cong\mathcal{C}_t(H)$, and thus $\mathcal{C}_t(T)$ is trivial. Therefore, if $T$ is a $t$-SP-domain, then $T$ is radical factorial, a contradiction.

Next we provide a simple way to construct nontrivial examples of $w$-SP-monoids (or $t$-SP-monoids) that are grading monoids (if nontrivial examples of $w$-SP-domains or $t$-SP-domains are already given). Note that if $H$ is root-closed, then $H$ is a grading monoid if and only if $H^{\times}$ is torsionfree. (If $H$ is a grading monoid, then $H$ is torsionless, and thus $H^{\times}$ is torsionfree. Now let $H$ be root-closed and let $H^{\times}$ be torsionfree. Let $n\in\mathbb{N}$ and $x,y\in H$ be such that $nx=ny$. Then $n(x-y)=0\in H$, and thus $x-y\in H$, since $H$ is root-closed. We infer that $x-y\in H^{\times}$. Since $H^{\times}$ is torsionfree and $n(x-y)=0$, we have that $x=y$. Therefore, $H$ is a grading monoid.)

\begin{remark}\label{Rem 6.5} Let $R$ be an integral domain, $H$ a monoid and $U$ a subgroup of $H^{\times}$ with $U\not=H$. Set $H/U=\{xU\mid x\in H\}$ and let $V$ be a subgroup of $R^{\times}$ such that $R^{\times}/V$ is torsionfree (e.g. $V=R^{\times}$ or $V=\{x\in R\mid x^n=1$ for some $n\in\mathbb{N}\}$) and $V\not=R^{\bullet}$.
\begin{enumerate}
\item[(1)] $R$ is a $w$-SP-domain (resp. a $t$-SP-domain) if and only if $R^{\bullet}$ is a $w$-SP-monoid (resp. a $t$-SP-monoid).
\item[(2)] $H$ is a $w$-SP-monoid (resp. a $t$-SP-monoid) if and only if $H/U$ is a $w$-SP-monoid (resp. a $t$-SP-monoid).
\item[(3)] $R$ is a $w$-SP-domain (resp. a $t$-SP-domain) if and only if $R^{\bullet}/V$ is a $w$-SP-monoid (resp. a $t$-SP-monoid). If these equivalent conditions are satisfied, then $R^{\bullet}/V$ is a grading monoid.
\end{enumerate}
\end{remark}

\begin{proof} (1) Note that $f:\mathcal{I}_t(R)\rightarrow\mathcal{I}_t(R^{\bullet})$ defined by $f(I)=I\setminus\{0\}$ for each $I\in\mathcal{I}_t(R)$ is a semigroup isomorphism. Moreover, if $I\in\mathcal{I}_t(R)$, then $I$ is radical if and only if $f(I)$ is radical. Therefore, the statement is an immediate consequence of Theorem~\ref{Thm 4.2} and Corollary~\ref{Cor 4.4}.

(2) Observe that $f:\mathcal{I}_t(H)\rightarrow\mathcal{I}_t(H/U)$ defined by $f(I)=\{xU\mid x\in I\}$ for each $I\in\mathcal{I}_t(H)$ is a semigroup isomorphism. Furthermore, if $I\in\mathcal{I}_t(H)$, then $I$ is radical if and only if $f(I)$ is radical. Again, the statement is a consequence of Theorem~\ref{Thm 4.2} and Corollary~\ref{Cor 4.4}.

(3) The first statement follows from (1) and (2). Set $A=R^{\bullet}/V$ and suppose $A$ is a $t$-SP-monoid. Clearly, $A$ is a root-closed monoid whose elements are cancellative. Observe that $A^{\times}=R^{\times}/V$ is torsionfree. Therefore, $A$ is a grading monoid.
\end{proof}

Let $R$ be an integral domain and $X$ an indeterminate over $R$. We say that $\ast$ is a star operation on $R$ if $\ast$ is an ideal system on $R$ such that $d\leq\ast$. Moreover, we say that $\ast$ is a star operation of finite type if $\ast$ is a finitary ideal system on $R$. Let $\ast$ be a star operation of finite type on $R$. We say that $R$ is a P$\ast$MD if $R$ is a $\ast$-Pr\"ufer domain. For $f\in R[X]$ let $c(f)$ be the content of $f$. Set $N_{\ast}=\{g\in R[X]\mid c(g)_{\ast}=R\}$. By ${\rm Na}(R,\ast)=\{\frac{f}{g}\mid f\in R[X], g\in N_{\ast}\}$ we denote the $\ast$-Nagata ring of $R$.

\begin{proposition}\label{Prop 6.6} Let $R$ be an integral domain, $\ast$ a star operation of finite type on $R$ and $X$ an indeterminate over $R$. Then $R$ is a $\ast$-almost Dedekind $\ast$-SP-domain if and only if ${\rm Na}(R,\ast)$ is an SP-domain.
\end{proposition}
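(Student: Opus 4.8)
The plan is to pass from $\ast$ to its $d$-modularization and then transfer the SP-property along the ideal correspondence attached to the $\ast$-Nagata ring. Put $w_{\ast}=\widetilde{\ast_d}$; since $d$ is a modular finitary ideal system with $d\leq\ast$, Lemma~\ref{Lem 2.1} shows that $w_{\ast}$ is a modular finitary star operation on $R$ with $\ast$-$\max(R)=w_{\ast}$-$\max(R)$ and $I_{w_{\ast}}=\bigcap_{M\in\ast\textnormal{-}\max(R)}IR_M$ for every nonzero ideal $I$ of $R$. If $R$ is a field the assertion is clear: $R$ is vacuously $\ast$-almost Dedekind and $\ast$-SP, and ${\rm Na}(R,\ast)$ is a field, hence an SP-domain. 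So assume $R$ is not a field. Applying Theorem~\ref{Thm 4.3}(A) with $r=\ast$ and $p=d$, the domain $R$ is a $\ast$-almost Dedekind $\ast$-SP-domain if and only if $R$ is a $w_{\ast}$-SP-domain; this is precisely the point at which the ``$\ast$-almost Dedekind'' hypothesis becomes the right one, since $w_{\ast}$ is modular while $\ast$ need not be, and the Nagata ring only ``sees'' the $w_{\ast}$-ideals. Thus it remains to show that $R$ is a $w_{\ast}$-SP-domain if and only if ${\rm Na}(R,\ast)$ is an SP-domain.

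Set $T={\rm Na}(R,\ast)$. I would use the standard structure of the $\ast$-Nagata ring: $M\mapsto MT$ is a bijection from $\ast$-$\max(R)$ onto the set of maximal ideals of $T$ with $T_{MT}=R_M(X)$, every nonzero ideal of $T$ is extended from $R$, and $I\mapsto IT$ is an inclusion-preserving bijection from the set of nonzero $w_{\ast}$-ideals of $R$ onto the set of nonzero ideals of $T$, with inverse $A\mapsto A\cap R$ and $IT\cap R=I_{w_{\ast}}$ for every nonzero ideal $I$ of $R$. Two compatibilities are then immediate. First, $(IT)(JT)=(IJ)T$, and contracting, $(IJ)T\cap R=(IJ)_{w_{\ast}}$, which is the $w_{\ast}$-product of $I$ and $J$; so the bijection carries finite $w_{\ast}$-products to finite products in $T$. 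Second, for a nonzero $w_{\ast}$-ideal $I$ and $x\in R$ one has $x\in\sqrt{IT}$ iff $x^n\in IT\cap R=I$ for some $n$, so $\sqrt{IT}\cap R=\sqrt[R]{I}$; since $\sqrt{IT}$ is a nonzero ideal of $T$, its contraction is a $w_{\ast}$-ideal, whence $\sqrt[R]{I}$ is a $w_{\ast}$-ideal and $\sqrt{IT}=(\sqrt[R]{I})T$. In particular a nonzero $w_{\ast}$-ideal $I$ is a radical $w_{\ast}$-ideal precisely when $IT$ is a radical ideal of $T$.

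Combining these, I would argue that a nonzero ideal $A=IT$ of $T$ is a finite product of radical ideals of $T$ if and only if the corresponding nonzero $w_{\ast}$-ideal $I$ of $R$ is a finite $w_{\ast}$-product of radical $w_{\ast}$-ideals of $R$: given $A=B_1\cdots B_n$ with each $B_j$ radical, the ideals $B_j\cap R$ are radical $w_{\ast}$-ideals and $I=A\cap R=(\prod_j(B_j\cap R))_{w_{\ast}}$, while the converse is the product/radical compatibility above. Handling the zero ideal and the unit ideal trivially on both sides, this yields that $T$ is an SP-domain if and only if every nonzero $w_{\ast}$-ideal of $R$ is a finite $w_{\ast}$-product of radical $w_{\ast}$-ideals, i.e.\ if and only if $R$ is a $w_{\ast}$-SP-domain; together with the first paragraph this completes the proof. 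The main obstacle is not any single computation but marshalling the two key inputs: Theorem~\ref{Thm 4.3}(A), which identifies $w_{\ast}$-SP as the correct modular form of the hypothesis on $R$, and the $\ast$-Nagata ideal correspondence in the precise form that its ideals are indexed by the $w_{\ast}$-ideals of $R$ compatibly with contraction; once these are in place the transfer of the defining SP-property is formal, the correspondence being a semigroup isomorphism of ideal monoids that takes radical elements to radical elements.
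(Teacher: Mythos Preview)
Your reduction via Theorem~\ref{Thm 4.3}(A) to the statement ``$R$ is a $w_{\ast}$-SP-domain if and only if $T={\rm Na}(R,\ast)$ is an SP-domain'' is correct and clean. The gap is in the second step: the claim that every nonzero ideal of $T$ is extended from $R$, equivalently that $I\mapsto IT$ is a bijection from nonzero $w_{\ast}$-ideals of $R$ onto nonzero ideals of $T$, is false without further hypotheses. Take $R=k[u,v]$ and $\ast=d$ (so $w_{\ast}=d$ and $T=R(X)$), and let $f=u+vX$. Then $f$ is a prime element of $R[X]=k[u,v,X]$; if $r\in fT\cap R$, then $f\mid rg$ in $R[X]$ for some $g$ with $c(g)=R$, and since $f\nmid g$ (else $c(g)\subseteq (u,v)$) we get $f\mid r$, forcing $r=0$. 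Thus $fT\cap R=(0)$ while $fT\neq(0)$, so $fT$ is not extended from $R$. Your argument for ``$T$ SP $\Rightarrow$ $R$ is $w_{\ast}$-SP'' then breaks down: given $IT=B_1\cdots B_n$ with $B_j$ radical in $T$, you cannot conclude $B_j=(B_j\cap R)T$, so the equality $I=(\prod_j(B_j\cap R))_{w_{\ast}}$ is unjustified (and some $B_j\cap R$ may even be zero).

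The fix is exactly the move the paper makes: establish the P$\ast$MD condition first. If $T$ is an SP-domain then $T$ is almost Dedekind, hence Pr\"ufer, so $R$ is a P$\ast$MD by \cite[Theorem 3.1]{FJS}; conversely if $R$ is a $w_{\ast}$-SP-domain then $R$ is $w_{\ast}$-almost Dedekind, hence $\ast$-almost Dedekind, hence a P$\ast$MD. Under P$\ast$MD one has $fT=c(f)T$ for every nonzero $f\in R[X]$ (this is the content of \cite[Lemma 2.5 and Remark 3.1]{FJS}), so every ideal of $T$ is extended from $R$ and your bijection becomes available. The paper organizes the argument slightly differently: instead of transporting the SP-definition along the full ideal correspondence, it uses Corollary~\ref{Cor 5.3} to rewrite each side as ``Pr\"ufer-type condition plus $\mathcal{I}^{*}$ radical factorial'' and then, still under P$\ast$MD, builds an explicit monoid isomorphism $\mathcal{I}_{\ast}^{*}(R)\to\mathcal{I}_d^{*}(T)$, $I\mapsto IT$. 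Your route through Theorem~\ref{Thm 4.3}(A) and the $w_{\ast}$-ideal correspondence is a legitimate alternative once the P$\ast$MD step is inserted; what it buys is that you never need to isolate the invertible ideals or invoke Corollary~\ref{Cor 5.3}, while the paper's route makes the role of the Pr\"ufer condition more explicit and relies only on the invertible-ideal correspondence, which is lighter to verify.
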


\begin{proof} Set $S={\rm Na}(R,\ast)$. Let $\mathcal{I}$ denote the monoid of $\ast$-invertible $\ast$-ideals of $R$ and let $\mathcal{J}$ denote the monoid of invertible ideals of $S$. It follows from Corollary~\ref{Cor 5.3} that $R$ is a $\ast$-almost Dedekind $\ast$-SP-domain if and only if $R$ is a P$\ast$MD and $\mathcal{I}$ is radical factorial. Since every SP-domain is an almost Dedekind domain, it follows by analogy that $S$ is an SP-domain if and only if $S$ is a Pr\"ufer domain and $\mathcal{J}$ is radical factorial. We infer by \cite[Theorem 3.1]{FJS} that $R$ is a P$\ast$MD if and only if $S$ is a Pr\"ufer domain. Therefore, it is sufficient to show that if $R$ is a P$\ast$MD, then the map $\varphi:\mathcal{I}\rightarrow\mathcal{J}$ defined by $\varphi(I)=IS$ for all $I\in\mathcal{I}$ is a monoid isomorphism. Let $R$ be a P$\ast$MD. It follows by \cite[Lemma 2.4]{FJS} that $\varphi$ is a well-defined map. We continue by showing the following claim.

\textsc{Claim.} If $I$ is a nonzero finitely generated ideal of $R$, then $I_{\ast}S=IS$.

Let $I$ be a nonzero finitely generated ideal of $R$. We have to show that $I_{\ast}\subseteq IS$. Let $x\in I_{\ast}$. Note that $IS=\{\frac{f}{g}\mid f\in I[X], g\in N_{\ast}\}$. Since $I_{\ast}$ is $\ast$-invertible, we have that $(II^{-1})_{\ast}=R$, and hence there is some finite $E\subseteq II^{-1}\subseteq R$ such that $E_{\ast}=R$. Clearly, there is some $g\in R[X]$ such that $c(g)={}_R(E)$. Observe that $c(g)_{\ast}=E_{\ast}=R$, and thus $g\in N_{\ast}$. Moreover, $Ex\subseteq II^{-1}I_{\ast}=I(I_{\ast})^{-1}I_{\ast}\subseteq I$, and hence $Ex\subseteq I$. Consequently, $gx\in I[X]$. It follows that $x\in IS$.\qed(Claim)

\smallskip
Now let $A$ and $B$ be $\ast$-invertible $\ast$-ideals of $R$. There exist nonzero finitely generated ideals $I$ and $J$ of $R$ such that $A=I_{\ast}$ and $B=J_{\ast}$. We infer by the claim that $\varphi((AB)_{\ast})=(AB)_{\ast}S=(IJ)_{\ast}S=IJS=ISJS=I_{\ast}SJ_{\ast}S=ASBS=\varphi(A)\varphi(B)$. Since $\varphi(R)=RS=S$, it follows that $\varphi$ is a monoid homomorphism.

To show that $\varphi$ is injective, it is sufficient to show that $AS\cap R=A$ for all $\ast$-invertible $\ast$-ideals $A$ of $R$. Let $A$ be a $\ast$-invertible $\ast$-ideal of $R$ and $x\in AS\cap R$. There is some $g\in N_{\ast}$ such that $gx\in A[X]$, and thus $c(g)x\subseteq A$. This implies that $x\in xR=xc(g)_{\ast}=(xc(g))_{\ast}\subseteq A_{\ast}=A$.

Finally, we show that $\varphi$ is surjective. By \cite[Lemma 2.5 and Remark 3.1]{FJS} we have that $S$ is a B\'ezout domain. Therefore, we need to show that for each nonzero $f\in R[X]$, there is some $\ast$-invertible $\ast$-ideal $A$ of $R$ such that $\varphi(A)=fS$. Let $f\in R[X]$ be nonzero. Set $A=c(f)_{\ast}$. Then $A$ is a $\ast$-invertible $\ast$-ideal of $R$ and it follows by \cite[Lemma 2.5 and Remark 3.1]{FJS} and the claim that $\varphi(A)=AS=c(f)S=fS$.
\end{proof}

We end this section with a remark on the power series ring.

\begin{remark}\label{Rem 6.7} Let $R$ be an integral domain and $X$ an indeterminate over $R$. If $R[\![X]\!]$ is a $t$-B\'ezout $t$-SP-domain, then $R$ is a $t$-B\'ezout $t$-SP-domain.
\end{remark}

\begin{proof} Let $R[\![X]\!]$ be a $t$-B\'ezout $t$-SP-domain. By Corollary~\ref{Cor 4.5} we have to show that the radical of every principal ideal of $R$ is principal. Let $x\in R$. By Corollary~\ref{Cor 4.5} there is some $g\in R[\![X]\!]$ such that $\sqrt[{R[\![X]\!]}]{xR[\![X]\!]}=gR[\![X]\!]$. Let $g_0$ be the constant coefficient of $R$. It is sufficient to show that $\sqrt{xR}=g_0R$. We have clearly that $\sqrt{xR}\subseteq\sqrt[{R[\![X]\!]}]{xR[\![X]\!]}=gR[\![X]\!]$. Consequently, if $f\in\sqrt{xR}$, then $f=gh$ for some $h\in R[\![X]\!]$, and hence $f=g_0h_0\in g_0R$. To prove the converse inclusion, observe that $g^k=xy$ for some $k\in\mathbb{N}$ and $y\in R[\![X]\!]$. Therefore, $g_0^k=xy_0\in xR$, and thus $g_0\in\sqrt{xR}$.
\end{proof}

Note that the converse of Remark~\ref{Rem 6.7} is not true, since there is a factorial domain $S$ for which $S[\![X]\!]$ is not factorial (as shown in \cite{S}). Clearly, $S$ is a $t$-B\'ezout $t$-SP-domain and a Krull domain. Therefore, $S[\![X]\!]$ is a Krull domain as well, but it fails to be a $t$-B\'ezout domain, since a $t$-B\'ezout Krull domain is obviously a factorial domain.

\bigskip
\textbf{Acknowledgements.} This work was supported by the Austrian Science Fund FWF, Project Number J4023-N35. We would like to thank the referee for carefully reading the manuscript and for many valuable suggestions and comments which improved the quality of this paper.

\end{document}